\def\NAT@def@citea{\def\@citea{\NAT@separator}}
\theoremstyle{plain}
\newtheorem{theorem}{Theorem}[section]
\newtheorem{lemma}[theorem]{Lemma}
\newtheorem{corollary}[theorem]{Corollary}
\newtheorem{proposition}[theorem]{Proposition}
\theoremstyle{definition}
\newtheorem{definition}[theorem]{Definition}
\theoremstyle{remark}
\newtheorem{remark}{Remark}
\newcommand{\be}{\begin{equation}}
\newcommand{\ee}{\end{equation}}
\newcommand{\bea}{\begin{eqnarray}}
\newcommand{\eea}{\end{eqnarray}}
\begin{document}

\title{Generalized $q$-Bernoulli polynomials generated by Jackson $q$-Bessel functions}

\author{
\name{S.Z.H. Eweis\textsuperscript{a}\thanks{CONTACT S.Z.H. Eweis. Email: Sahar.Zareef@bsu.edu.eg} and Z.S.I.  Mansour\textsuperscript{b}\thanks{CONTACT Z. S. Mansour. Email: zsmansour@cu.edu.eg}}
\affil{\textsuperscript{a}Mathematics and Computer Science Department,  Faculty of Science, Beni-Suef University, Beni-Suef, Egypt\\
 \textsuperscript{b}Department of Mathematics, Faculty of Science, Cairo University, Giza, Egypt.}
}

\maketitle

\begin{abstract}
 In this paper, we introduce the polynomials $B^{(k)}_{n,\alpha}(x;q)$ generated by a function including Jackson $q$-Bessel functions $J^{(k)}_{\alpha}(x;q)$ $ (k=1,2,3),\,\alpha>-1$. The cases $\alpha=\pm\frac{1}{2}$ are  the $q$-analogs of Bernoulli and Euler$^{,}$s polynomials introduced by Ismail and Mansour for $(k=1,2)$,  Mansour and Al-Towalib for $(k=3)$. We study the main properties of these polynomials, their large $n$ degree asymptotics and give their connection coefficients with the $q$-Laguerre polynomials and little $q$-Legendre polynomials.
\end{abstract}

\begin{keywords}
$q$-Bessel functions,  $q$-Bernoulli polynomials and numbers, asymptotic expansions, cauchy residue theorem.
\end{keywords}
\begin{amscode}
05A30, 11B68, 30E15, 32A27
\end{amscode}

\section{Introduction and Preliminaries}\label{Sec.1}

{\small The Bernoulli polynomials $ \left(B_{n}(x)\right)_{n}$ are defined by the generating function
$$\frac{te^{xt}}{e^{t}-1}=\sum_{n=0}^{\infty}B_{n}(x)\frac{t^{n}}{n!},\quad |t|<2\pi.$$
 In a series of papers, Frappier \cite{Frappier1, Frappier2, F3} studied  the generalized  Bernoulli polynomials $ B_{n,\alpha}(x)$, defined by the generating function
 \begin{equation}\label{q9321}
   \frac{e^{(x-\frac{1}{2})t}}{g_{\alpha}(\frac{it}{2})}=\sum_{n=0}^{\infty}B_{n,\alpha}(x)\frac{t^{n}}{n!},\quad  |t|<2 j_{1,\alpha},
 \end{equation}
where
$$ g_{\alpha}(t)=2^{\alpha}\Gamma(\alpha+1)\frac{ J_{\alpha}(t)}{t^{\alpha}},$$
$ J_{\alpha}(t)$ is the Bessel function of the first kind of order $\alpha,$ and $j_{1,\alpha} $ is the smallest positive zero of $J_{\alpha}(t)$.}
{\small Ismail and Mansour, see \cite{IZ},  introduced $q$-pair of analogs of the Bernoulli polynomials by the generating functions
 \begin{align}\label{q8}
 \begin{split}
  \frac{te_{q}(xt)}{e_{q}(\frac{t}{2})E_{q}(\frac{t}{2})-1}&=\sum_{n=0}^{\infty}b_{n}(x;q)\frac{t^{n}}{[n]_{q}!},\\
  \frac{t E_{q}(xt)}{e_{q}(\frac{t}{2})E_{q}(\frac{t}{2})-1}&=\sum_{n=0}^{\infty}B_{n}(x;q)\frac{t^{n}}{[n]_{q}!}.\end{split}
\end{align}
They also defined a pair of $q$-analogs of the Euler polynomials by the generating functions
\begin{align}\label{q13248}
 \begin{split}
  \frac{2e_{q}(xt)}{E_{q}(\frac{t}{2})e_{q}(\frac{t}{2})+1}&=\sum_{n=0}^{\infty}e_{n}(x;q)\frac{t^{n}}{[n]_{q}!},\\
  \frac{2 E_{q}(xt)}{E_{q}(\frac{t}{2})e_{q}(\frac{t}{2})+1}&=\sum_{n=0}^{\infty}E_{n}(x;q)\frac{t^{n}}{[n]_{q}!},\end{split}
\end{align}
where
 \[[n]_{q}!=\frac{(q;q)_{n}}{(1-q)^{n}}\quad (n\in\mathbb{ N}), \quad (a;q)_{n}=\left\{
                                                                           \begin{array}{ll}
                                                                             1, & \hbox{$n=0$;} \\
                                                                              \displaystyle\prod_{k=0}^{n-1}(1-aq^{k}), & \hbox{$n\in\mathbb{N}$,}
                                                                           \end{array}
                                                                         \right.
 \]
and $a\in\mathbb{C}$, see \cite{Gasper}.  The functions $E_q(x)$ and $e_q(x)$ are the $q$-analogs of the exponential functions defined by
{\small \begin{align}\label{h8}
 \begin{split}
 E_q(x):= (-x(1-q);q)_{\infty}&=\sum_{n=0}^{\infty}\frac{q^{\frac{n(n-1)}{2}}(1-q)^{n}x^{n}}{(q;q)_{n}}, \,\, x\in \mathbb{C}, \\
  e_q(x):=\frac{1}{(x(1-q);q)_{\infty}}&=\sum_{n=0}^{\infty}\frac{(1-q)^{n}x^{n}}{(q;q)_{n}}, \,\, |x|< \frac{1}{1-q}, \end{split}
  \end{align}}
  see e.g. \cite{Gasper}.
}
{\small In \cite{marim},  Mansour and Al-Towalib introduced  $q$-analogs of Bernoulli and Euler polynomials by the generating functions
\begin{equation}\label{q439}
\begin{split}
  &\frac{t \exp_{q}(xt)\exp_{q}(\frac{-t}{2})}{\exp_{q}(\frac{t}{2})-\exp_{q}(\frac{-t}{2})}=\sum_{n=0}^{\infty}\tilde{B}_{n}(x;q)\frac{t^{n}}{[n]_{q}!},\\&
   \frac{2 \exp_{q}(xt)\exp_{q}(\frac{-t}{2})}{\exp_{q}(\frac{t}{2})+\exp_{q}(\frac{-t}{2})}=\sum_{n=0}^{\infty}\tilde{E}_{n}(x;q)\frac{t^{n}}{[n]_{q}!},
   \end{split}
\end{equation}}
{\small where
 \begin{equation*}\label{q4839}
\exp_{q}(x)=\sum_{n=0}^{\infty}q^{\frac{n(n-1)}{4}}\frac{x^{n}}{[n]_{q}!},\quad x\in \mathbb{C},
\end{equation*}
is a $q$-analog of the exponential function. This $q$-exponential function has the property
$\lim_{q\rightarrow1}\exp_{q}(x) = e^{x}$ for $x \in \mathbb{C}$. It is an
entire function of $x$ of order zero, see \cite[Eq. (1.3.27), P. 12]{Gasper}.}
\vskip0.2cm


  {\small  In this paper,  we use $\mathbb{N} $  to denote the set of positive integers and  $\mathbb{N}_{0} $  to denote the set of non-negative integers. Throughout this paper, unless otherwise is stated, $q$ is a positive number that is less than one. We follow Gasper and Rahman \cite{Gasper} to define the
$q$-shifted factorial, the $q$-binomial coefficients, and the $q$-gamma function.  The $q$-integer number $[n]_{q}$ is defined by
\begin{equation*}
[n]_{q}=\frac{1-q^n}{1-q},\quad n\in\mathbb{ N}_0.
\end{equation*}

Jackson in \cite{Jackson} defined the $q$-difference operator by
         \begin{equation*}
           D_q f(z)= \dfrac{f(qz)-f(z)}{z(q-1)}, \quad z\neq 0.
         \end{equation*}
}
{\small
The symmetric $q$-difference operator is defined by, see \cite{Cardoss, Gasper},
\begin{equation*}
\delta_{q,z} f(z)= \frac{{}f(q^{\frac{1}{2}}z)-f(q^{\frac{-1}{2}}z)}{(q^{\frac{1}{2}}-q^{\frac{-1}{2}})z}, \quad z\neq 0.
\end{equation*}

}
{\small
The $q$-trigonometric functions $\sin_{q}z,\, \cos_{q}z,\,Sin_{q}z$ and $Cos_{q} z$ are defined by
 \begin{equation*}
 \begin{split}
   &\sin_{q}z=\dfrac{e_{q}(iz)-e_{q}(-iz)}{2i},\quad \cos_{q}z=\dfrac{e_{q}(iz)+e_{q}(-iz)}{2},\,\,|z|<1,\\&
   Sin_{q}z=\dfrac{E_{q}(iz)-E_{q}(-iz)}{2i},\quad Cos_{q}z=\dfrac{E_{q}(iz)+E_{q}(-iz)}{2},\,\,z\in\mathbb{C},
   \end{split}
 \end{equation*}}
 see  \cite{Gasper, Annaby}.
{\small The $q$-sine and cosine functions $S_{q}(z),\,\, C_{q}(z)$ are defined by the $q$-Euler$^{,}$s formula
\begin{equation*}
  \exp_{q}(iz):=C_{q}(z)+iS_{q}(z),
\end{equation*}
where}
{\small\begin{equation*}
\begin{split}
  C_{q}(z)=\sum_{n=0}^{\infty}(-1)^{n}\frac{q^{n(n-\frac{1}{2})}}{[2n]_{q}!}z^{2n},\quad  S_{q}(z)=\sum_{n=0}^{\infty}(-1)^{n}\frac{q^{n(n+\frac{1}{2})}}{[2n+1]_{q}!}z^{2n+1},
  \end{split}
\end{equation*}}
cf. \cite[P. 2]{Cardoss}. {\small The hyperbolic functions $Sh_{q}(z)$ and $Ch_{q}(z)$ are defined for ${\small z\in\mathbb{C}}$ by}
{\small\begin{equation}\label{f40505}
\begin{split}
  &Sh_{q}(z):=-iS_{q}(iz)=\frac{\exp_{q}(z)-\exp_{q}(-z)}{2},\\&
  Ch_{q}(z):= C_{q}(iz)=\frac{\exp_{q}(z)+\exp_{q}(-z)}{2}.
  \end{split}
\end{equation}}

{\small
There are three known $ q$-analogs of  the Bessel function that are due to Jackson \cite{Jackson}. These
are denoted by $ J^{(k)}_{\alpha}(t;q)\, (k=1,2,3)$ and defined by
\begin{equation*}
    J_{\alpha}^{(1)}(t;q)=\frac{(q^{\alpha+1};q)_{\infty}}{(q;q)_{\infty}}\sum_{n=0}^{\infty}(-1)^n\frac{(\frac{t}{2})^{2n+\alpha}}{(q;q)_{n}(q^{\alpha+1};q)_{n}}
    \quad(|t|<2),
 \end{equation*}
\begin{equation*}
    J_{\alpha}^{(2)}(t;q)=\frac{(q^{\alpha+1};q)_{\infty}}{(q;q)_{\infty}}\sum_{n=0}^{\infty}(-1)^n\frac{q^{n(\alpha+n)}(\frac{t}{2})^{2n+\alpha}}{(q;q)_{n}(q^{\alpha+1};q)_{n}}\quad(t\in \mathbb{C} ),
 \end{equation*}
\begin{equation*}
    J_{\alpha}^{(3)}(t;q)=\frac{(q^{\alpha+1};q)_{\infty}}{(q;q)_{\infty}}\sum_{n=0}^{\infty}(-1)^n\frac{q^{\frac {n(n+1)}{2}}t^{2n+\alpha}}{(q;q)_{n}(q^{\alpha+1};q)_{n}}
    \quad(t\in \mathbb{C}).
 \end{equation*}}

{\small For convenience, we set
{\small\begin{equation}\label{g7907}
\begin{split}
& \mathcal{J}_{\alpha}^{(k)} (t;q):=\frac{(q;q)_{\infty}}{(q^{\alpha+1};q)_{\infty}}(\frac {t}{2})^{-\alpha} J_{\alpha}^{(k)} (t;q)\quad( k=1,2),\\&
  \mathcal{J}_{\alpha}^{(3)} (t;q):=\frac{(q;q)_{\infty}}{(q^{\alpha+1};q)_{\infty}}t^{-\alpha} J_{\alpha}^{(3)}(t;q).\end{split}
\end{equation}}
 The functions $\mathcal{J}_{\alpha}^{(k)} (t;q)\, (k=1,2,3)$  are called the modified Jackson $q$-Bessel functions.} {\small From now on, we use $(j^{(k)}_{m,\alpha})_{m=1}^{\infty}$ to denote the positive zeros of $J^{(k)}_{\alpha}(\cdots;q^{2})$ arranged in increasing order of  magnitude. Consequently, $j^{(k)}_{1,\alpha}$ is the smallest positive zero of $J^{(k)}_{\alpha}(\cdots;q^{2})\,( k=1,2,3)$.}
\vskip0.4cm
{\small
This paper is organized as follows. In Section 2, we introduce three $q$-analogs of the generalized Bernoulli polynomials defined in (\ref{q9321}).  The generating functions of these $q$-analogs include the three $q$-analogs of Jackson $q$-Bessel functions mentioned above. We also include the main properties of these $q$-analogs.  Section 3  introduces a $q$-Fourier expansion for the generalized Bernoulli numbers related to the first and second Jackson $q$-Bessel functions. Also,  their large $n$ degree asymptotics is derived. Finally, in Section 4 as an application, we introduce the connection coefficients between $q$-analogs and certain $q$-orthogonal polynomials.}
\section{ Generalized $q$-Bernoulli polynomials generated by Jackson $q$-Bessel functions} \label{$q$-Bernoulli polynomials generated by  Jackson $q$-Bessel functions}
{\small This section introduces three $ q$-analogs of the generalized Bernoulli polynomials introduced by Frappier in \cite{Frappier1, Frappier2, F3}.

\begin{definition}\label{q-Bernoulli}{\small The generalized $q$-Bernoulli polynomials $B^{(k)}_{n,\alpha}(x;q)\,( k=1,2,3)$ are \\defined by  the generating functions
\begin{equation}\label{q14}
  \quad \frac{e_{q}(xt)e_{q}(\frac{-t}{2})}{g^{(1)}_{\alpha}(it;q)}=\sum_{n=0}^{\infty}B^{(1)}_{n,\alpha}(x;q)\frac{t^{n}}{[n]_{q}!},\quad|t|<
 \frac{ j^{(1)}_{1,\alpha}}{1-q},
\end{equation}
\begin{equation}\label{q13}
  \quad \frac{E_{q}(xt)E_{q}(\frac{-t}{2})}{g^{(2)}_{\alpha}(it;q)}=\sum_{n=0}^{\infty}B^{(2)}_{n,\alpha}(x;q)\frac{t^{n}}{[n]_{q}!},\quad|t|<\frac{
  j^{(2)}_{1,\alpha}}{1-q},
\end{equation}
\begin{equation}\label{q197}
   \frac{\exp_{q}(xt)\exp_{q}(\frac{-t}{2})}{g^{(3)}_{\alpha}(it;q)}=\sum_{n=0}^{\infty}B^{(3)}_{n,\alpha}(x;q)\frac{t^{n}}{[n]_{q}!},
   \quad|t|<\frac{2q^{\frac{1}{4}}j^{(3)}_{1,\alpha}}{1-q},
\end{equation}
where  $g^{(k)}_{\alpha}(t;q) \,( k=1,2,3 )$  are the functions defined  for $(k=1,2)$ by
\begin{equation*}\label{q82}
  g^{(k)}_{\alpha}(t;q):=(1+q)^{\alpha}\Gamma_{q^{2}}(\alpha+1)\,(t/2)^{-\alpha}\,J_{\alpha}^{(k)}(t(1-q);q^{2})=\mathcal{J}_{\alpha}^{(k)} (t(1-q);q^{2}),
\end{equation*}
and
\begin{equation*}
  g^{(3)}_{\alpha}(t;q):=(1+q)^{\alpha}\Gamma_{q^{2}}(\alpha+1)\,( q^{\frac{-1}{4}}t/2)^{-\alpha}\,J_{\alpha}^{(3)}(\frac{t}{2}(1-q)q^{\frac {-1}{4}};q^{2})=\mathcal{J}_{\alpha}^{(3)}(\frac {t}{2}(1-q)q^{\frac{-1}{4}};q^{2}).
\end{equation*}
}
\end{definition}
{\small
 Since the generating functions in (\ref{q8}), (\ref{q13248}), and  (\ref{q439}) can be written as}
{\small\begin{equation}\label{q96543}
\begin{split}
&\frac{te_{q}(xt)e_{q}(\frac{-t}{2})}{2\sinh_{q}\frac{t}{2}}=\sum_{n=0}^{\infty}b_{n}(x;q)\frac{t^{n}}{[n]_{q}!},\\&
\frac{t E_{q}(xt)E_{q}(\frac{-t}{2})}{2Sinh_{q}\frac{t}{2}}=\sum_{n=0}^{\infty}B_{n}(x;q)\frac{t^{n}}{[n]_{q}!},\end{split}
\end{equation}}
{\small\begin{align}\label{q103248}
 \begin{split}
  \frac{e_{q}(xt)e_{q}(\frac{-t}{2})}{\cosh_{q}\frac{t}{2}}&=\sum_{n=0}^{\infty}e_{n}(x;q)\frac{t^{n}}{[n]_{q}!},\\
  \frac{ E_{q}(xt)E_{q}(\frac{-t}{2})}{Cosh_{q}\frac{t}{2}}&=\sum_{n=0}^{\infty}E_{n}(x;q)\frac{t^{n}}{[n]_{q}!},\end{split}
\end{align}}
and
{\small\begin{equation}\label{q4039}
\begin{split}
  &\frac{t \exp_{q}(xt)\exp_{q}(\frac{-t}{2})}{2Sh_{q}(\frac{t}{2})}=\sum_{n=0}^{\infty}\tilde{B}_{n}(x;q)\frac{t^{n}}{[n]_{q}!},\\&
   \frac{ \exp_{q}(xt)\exp_{q}(\frac{-t}{2})}{Ch_{q}(\frac{t}{2})}=\sum_{n=0}^{\infty}\tilde{E}_{n}(x;q)\frac{t^{n}}{[n]_{q}!},
   \end{split}
\end{equation} }
{\small then, if we substitute with $\alpha=\pm\frac{1}{2}$  in (\ref{q14}), (\ref{q13}), and (\ref{q197}), we obtain the $q$-Bernoulli and Euler polynomials defined in (\ref{q96543}), (\ref{q103248}) and (\ref{q4039}), respectively.}

\begin{lemma}\label{gq}{\small
For $n\in\mathbb{N}_{0}$ and $Re \,\alpha>-1$,
\begin{equation*}
\frac {e_{q}(\frac{-t}{2})}{ g^{(1)}_{\alpha}(it;q)}=\frac {E_{q}(\frac{-t}{2})}{ g^{(2)}_{\alpha}(it;q)},\,\,\, |t|<\frac{1}{1-q}\min\{ j^{(1)}_{1,\alpha}, j^{(2)}_{1,\alpha},2\}.
\end{equation*}}
\end{lemma}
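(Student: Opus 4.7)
The plan is to collapse both sides of the claimed identity onto the same infinite product, using three ingredients: the definitions of $g^{(1)}_\alpha, g^{(2)}_\alpha$ in terms of the modified Jackson $q$-Bessel functions, the classical relation between $J^{(1)}_\alpha$ and $J^{(2)}_\alpha$, and the duplication identity $(a;q)_{\infty}(-a;q)_{\infty}=(a^2;q^2)_{\infty}$. The claim is equivalent to
$$\frac{g^{(1)}_{\alpha}(it;q)}{g^{(2)}_{\alpha}(it;q)}=\frac{e_q(-t/2)}{E_q(-t/2)},$$
so I would prove this form, which has the advantage that both sides are manifestly holomorphic functions of $t$ on the indicated disk, independent of $\alpha$ on the right-hand side.

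First, I would simplify the right-hand side. From \eqref{h8}, $e_q(-t/2)=1/(-t(1-q)/2;q)_{\infty}$ and $E_q(-t/2)=(t(1-q)/2;q)_{\infty}$, so
$$\frac{e_q(-t/2)}{E_q(-t/2)}=\frac{1}{(t(1-q)/2;q)_{\infty}(-t(1-q)/2;q)_{\infty}}=\frac{1}{(t^{2}(1-q)^{2}/4;q^{2})_{\infty}},$$
by the duplication identity. The convergence bound $|t|<2/(1-q)$ here is exactly the condition in the lemma that $|t|(1-q)<2$.

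Next, I would treat the left-hand side. By Definition~\ref{q-Bernoulli} and \eqref{g7907}, for $k=1,2$ one has $g^{(k)}_{\alpha}(it;q)=\mathcal{J}^{(k)}_{\alpha}(it(1-q);q^{2})$, and the common prefactor $\frac{(q^{2};q^{2})_{\infty}}{(q^{2\alpha+2};q^{2})_{\infty}}(it(1-q)/2)^{-\alpha}$ cancels in the ratio, leaving
$$\frac{g^{(1)}_{\alpha}(it;q)}{g^{(2)}_{\alpha}(it;q)}=\frac{J^{(1)}_{\alpha}(it(1-q);q^{2})}{J^{(2)}_{\alpha}(it(1-q);q^{2})}.$$
The key step is now to apply the classical Jackson identity $J^{(2)}_{\alpha}(z;q)=(-z^{2}/4;q)_{\infty}J^{(1)}_{\alpha}(z;q)$ with $q$ replaced by $q^{2}$ and $z=it(1-q)$, which, since $-z^{2}/4=t^{2}(1-q)^{2}/4$, yields $J^{(2)}_{\alpha}(it(1-q);q^{2})=(t^{2}(1-q)^{2}/4;q^{2})_{\infty}J^{(1)}_{\alpha}(it(1-q);q^{2})$. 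Inverting gives exactly the expression derived for the right-hand side, and the assumption $|t|(1-q)<\min\{j^{(1)}_{1,\alpha},j^{(2)}_{1,\alpha}\}$ guarantees that neither $J^{(k)}_{\alpha}(it(1-q);q^{2})$ vanishes, so dividing is legitimate.

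The only nonroutine step is the invocation of $J^{(2)}_{\alpha}(z;q)=(-z^{2}/4;q)_{\infty}J^{(1)}_{\alpha}(z;q)$; this is classical (it appears in Ismail's monograph and is an immediate consequence of the $q$-binomial theorem applied to $(-z^{2}/4;q)_{\infty}$ and a Cauchy-product coefficient comparison with the series defining $J^{(2)}_{\alpha}$), so I would simply cite it rather than rederive it. Everything else reduces to bookkeeping with the definitions and the elementary identity $(a;q)_{\infty}(-a;q)_{\infty}=(a^{2};q^{2})_{\infty}$.
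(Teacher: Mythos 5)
Your proposal is correct and follows essentially the same route as the paper: both hinge on Hahn's identity $J^{(2)}_{\alpha}(z;q)=(-z^{2}/4;q)_{\infty}J^{(1)}_{\alpha}(z;q)$ applied with $q\mapsto q^{2}$, $z=it(1-q)$, the only cosmetic difference being that you verify the ratio $g^{(1)}_{\alpha}/g^{(2)}_{\alpha}=e_{q}(-t/2)/E_{q}(-t/2)$ via $(a;q)_{\infty}(-a;q)_{\infty}=(a^{2};q^{2})_{\infty}$, while the paper recognizes the same infinite product as $E_{q}(t/2)E_{q}(-t/2)$ and cancels directly.
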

\begin{proof}{\small
 Hahn  in \cite{HN} proved the identity
\begin{equation}\label{q15217}
  J^{(2)}_{\alpha}(t;q)=(\frac{-t^{2}}{4};q)_{\infty}\,J^{(1)}_{\alpha}(t;q),\,\,\, |t|<2.
\end{equation}
Since
\begin{equation}\label{q8432}
\begin{split}
  &g^{(k)}_{\alpha}(it;q)=(1+q)^{\alpha}\Gamma_{q^{2}}(\alpha+1)\,(\frac {it}{2})^{-\alpha}\,J_{\alpha}^{(k)}(it(1-q);q^{2})\,\, (k=1,2),\end{split}
\end{equation}
then, substituting from (\ref{q8432}) into (\ref{q15217}), we conclude that
 \begin{equation}\label{q1698}
\begin{split}
g^{(2)}_{\alpha}(it;q)=(\frac{t^{2}}{4}(1-q)^{2};q^{2})_{\infty}\,{g^{(1)}_{\alpha}(it;q)}= E_{q}(\frac{t}{2})E_{q}(\frac{-t}{2})\, {g^{(1)}_{\alpha}(it;q)}.\end{split}
\end{equation}
Hence
\begin{equation*}
\begin{split}
\frac {E_{q}(\frac{-t}{2})}{ g^{(2)}_{\alpha}(it;q)}= \frac {E_{q}(\frac{-t}{2})}{ E_{q}(\frac{t}{2})E_{q}(\frac{-t}{2})\, g^{(1)}_{\alpha}(it;q)}=\frac {e_{q}(\frac{-t}{2})}{ g^{(1)}_{\alpha}(it;q)},\end{split}
\end{equation*}
which completes the proof.
}
\end{proof}
\begin{definition}{\small
 The generalized $q$-Bernoulli numbers ${\beta}_{n,\alpha}(q),$  $ \beta^{(3)}_{n,\alpha}(q)$
are defined respectively in terms of the generating functions
\begin{align}\label{q66}
 \frac{e_{q}(\frac{-t}{2})}{g^{(1)}_{\alpha}(it;q)}&=\frac{E_{q}(\frac{-t}{2})}{g^{(2)}_{\alpha}(it;q)} =\sum_{n=0}^{\infty}\beta_{n,\alpha}(q)\frac{t^{n}}{[n]_{q}!},\\
 \frac{\exp_{q}(\frac{-t}{2})}{g^{(3)}_{\alpha}(it;q)} &=\sum_{n=0}^{\infty}\beta^{(3)}_{n,\alpha}(q)\frac{t^{n}}{[n]_{q}!}.
\end{align}}
\end{definition}
\begin{proposition}{\small
For $n\in\mathbb{N}$, we have
\begin{equation*}
  B^{(k)}_{2n+1,\alpha}(\frac{1}{2};q)=0\quad (k=1,2,3).
\end{equation*}}
\end{proposition}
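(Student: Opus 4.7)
The plan is to prove for each $k=1,2,3$ that the generating function of $B^{(k)}_{n,\alpha}(x;q)$ evaluated at $x = 1/2$ is an \emph{even} function of $t$; the conclusion then follows immediately by comparing Taylor coefficients, since an even power series contains no odd powers.

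The first step is to verify that $g^{(k)}_{\alpha}(it;q)$ is even in $t$ for each $k=1,2,3$. Inspecting the series definitions in (\ref{g7907}), the factor $(z/2)^{-\alpha}$ (respectively $z^{-\alpha}$ when $k=3$) built into $\mathcal{J}^{(k)}_{\alpha}(z;q^{2})$ exactly cancels the $z^{\alpha}$ prefactor appearing inside $J^{(k)}_{\alpha}(z;q^{2})$, leaving a power series in $z^{2}$ alone. Substituting $z = it(1-q)$ for $k=1,2$ (respectively $z = \tfrac{it}{2}(1-q)q^{-1/4}$ for $k=3$) then gives a power series in $t^{2}$, so $g^{(k)}_{\alpha}(it;q)$ is manifestly invariant under $t\mapsto -t$. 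Moreover, it equals $1$ at $t=0$, so the generating function is a bona fide power series on its stated disc of convergence.

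The second step is to observe that at $x = 1/2$, the numerator of each generating function in (\ref{q14}), (\ref{q13}), (\ref{q197}) takes the shape $f(t/2)\,f(-t/2)$ with $f\in\{e_{q},E_{q},\exp_{q}\}$, which is visibly symmetric under $t\mapsto -t$. Dividing this even numerator by the even denominator established in the first step yields an even formal power series in $t$, so all its odd Taylor coefficients vanish. Equating coefficients in the defining identity then reads off $B^{(k)}_{2n+1,\alpha}(\tfrac{1}{2};q)=0$ for all $n$, which is the assertion (the restriction $n\in\mathbb{N}$ in the statement is actually slack: the argument gives the conclusion for every odd index, including $n=0$).

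There is no substantive obstacle: the whole argument is a parity check. The one place that requires a little care is confirming the evenness of $g^{(k)}_{\alpha}(it;q)$ directly from the series, since this is where the $i$-factor and the renormalising power of $z$ conspire to remove the fractional exponent $\alpha$ and leave only even powers of $t$. After that, the symmetry of $f(t/2)f(-t/2)$ is automatic and the result falls out.
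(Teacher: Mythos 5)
Your argument is correct and is essentially the paper's own proof: both rest on the observation that at $x=\tfrac{1}{2}$ the generating functions become even in $t$ (even numerator $f(\tfrac{t}{2})f(-\tfrac{t}{2})$ over the even series $g^{(k)}_{\alpha}(it;q)$), so the odd coefficients vanish. Your extra verification that $g^{(k)}_{\alpha}(it;q)$ is a power series in $t^{2}$, and the remark that the conclusion also holds for the index $n=0$, are fine but add nothing beyond what the paper's proof already uses implicitly.
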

\begin{proof}{\small
 If we substitute with $x=\frac{1}{2}$  in Equations (\ref{q14})-(\ref{q197}),  we find that their left hand side are even functions. Therefore, the  coefficients of  the odd powers of $t^{n}$ on the right hand sides of Equations (\ref{q14})-(\ref{q197}) vanish. This proves the proposition.}
\end{proof}
\begin{proposition}{\small
For ${\small k\in\{1,2,3\}}$ and $n\in\mathbb{N}$,  the  polynomials $B_{n,\alpha}^{(k)}(x;q)$ have the representation  $B_{0,\alpha}^{(k)}(x;q)=1$,
\begin{equation}\label{q19}
  B^{(1)}_{n,\alpha}(x;q)=\sum_{k=0}^{n}\left[
                                    \begin{array}{c}
                                      n \\
                                      k \\
                                    \end{array}
                                  \right]_{q}
   \beta_{n-k,\alpha} (q) x^{k},
\end{equation}
\begin{equation}\label{q18}
\quad\quad\, B^{(2)}_{n,\alpha}(x;q)=\sum_{k=0}^{n}\left[
                                    \begin{array}{c}
                                      n \\
                                      k \\
                                    \end{array}
                                  \right]_{q}
    q^{\frac{k(k-1)}{2}}\beta_{n-k,\alpha}(q) x^{k},
\end{equation}
\begin{equation}\label{q180}
\quad\quad\, B^{(3)}_{n,\alpha}(x;q)=\sum_{k=0}^{n}\left[
                                    \begin{array}{c}
                                      n \\
                                      k \\
                                    \end{array}
                                  \right]_{q}
    q^{\frac{k(k-1)}{4}}\beta^{(3)}_{n-k,\alpha}(q) x^{k}.
\end{equation}}
\end{proposition}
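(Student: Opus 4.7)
The plan is to factor each generating function in (\ref{q14})--(\ref{q197}) as a product of two absolutely convergent power series in $t$, expand each factor via its known $q$-series, and read off the coefficient of $t^{n}/[n]_q!$ using the $q$-Cauchy product. The normalisation $B_{0,\alpha}^{(k)}(x;q)=1$ is immediate from evaluating the left-hand side at $t=0$, since $g^{(k)}_{\alpha}(0;q)=1$ and $e_q(0)=E_q(0)=\exp_q(0)=1$.

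For (\ref{q19}) I would write
\[
\frac{e_{q}(xt)\,e_{q}(-t/2)}{g^{(1)}_{\alpha}(it;q)}
=e_{q}(xt)\cdot\frac{e_{q}(-t/2)}{g^{(1)}_{\alpha}(it;q)},
\]
use the expansion $e_q(xt)=\sum_{k\ge 0}x^{k}t^{k}/[k]_q!$ from (\ref{h8}), and invoke (\ref{q66}) to replace the second factor by $\sum_{j\ge 0}\beta_{j,\alpha}(q)\,t^{j}/[j]_q!$. The $q$-Cauchy product then yields
\[
\sum_{n\ge 0}\left(\sum_{k=0}^{n}\frac{[n]_q!}{[k]_q!\,[n-k]_q!}\,x^{k}\,\beta_{n-k,\alpha}(q)\right)\frac{t^{n}}{[n]_q!},
\]
and comparing with (\ref{q14}) gives (\ref{q19}). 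For (\ref{q18}) the same strategy applies with $E_q(xt)=\sum_{k\ge 0} q^{k(k-1)/2}x^{k}t^{k}/[k]_q!$; the key input here is Lemma~\ref{gq}, which identifies $E_q(-t/2)/g^{(2)}_{\alpha}(it;q)$ with the very same series $\sum_{j\ge 0}\beta_{j,\alpha}(q)t^{j}/[j]_q!$ appearing in (\ref{q66}), so the extra factor $q^{k(k-1)/2}$ coming from $E_q$ is exactly what appears in (\ref{q18}). For (\ref{q180}) one factors as $\exp_{q}(xt)\cdot\exp_{q}(-t/2)/g^{(3)}_{\alpha}(it;q)$, uses $\exp_q(xt)=\sum_{k\ge 0}q^{k(k-1)/4}x^{k}t^{k}/[k]_q!$, and substitutes the generating function defining $\beta^{(3)}_{n,\alpha}(q)$.

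The only real bookkeeping is to verify that, within the disc of convergence prescribed in Definition~\ref{q-Bernoulli}, both factors of each product converge absolutely so that the Cauchy product is legitimate. Since $E_q$ and $\exp_q$ are entire, $e_q(xt)$ is absolutely convergent whenever $|t|<1/(|x|(1-q))$, and the second factor in each case converges inside the disc avoiding the first positive zero of the corresponding $q$-Bessel denominator, a common disc of absolute convergence exists and the rearrangement is justified. I do not anticipate any genuine obstacle beyond this routine check: the proposition reduces to a formal consequence of the generating-function identities together with the $q$-series expansions of $e_q$, $E_q$ and $\exp_q$.
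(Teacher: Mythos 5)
Your proposal is correct and follows essentially the same route as the paper: factor the generating function as $e_q(xt)$ (resp. $E_q(xt)$, $\exp_q(xt)$) times the generating series of the generalized $q$-Bernoulli numbers, apply the $q$-Cauchy product, and equate coefficients of $t^{n}$, the extra factors $q^{k(k-1)/2}$ and $q^{k(k-1)/4}$ arising from the series for $E_q$ and $\exp_q$ exactly as you say. The paper writes out only the case $k=1$ and declares $k=2,3$ similar, so your treatment matches it, with the convergence remark being a routine addition.
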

\begin{proof}{\small
We prove the case $(k=1)$. The proofs for $(k=2,3)$ are similar and are omitted. Substituting with the series representation of $e_{q}(x)$ from (\ref{h8})
 into (\ref{q14}) gives
\begin{equation*}
 \begin{split}
 \sum_{n=0}^{\infty}B^{(1)}_{n,\alpha}(x;q)\frac{t^{n}}{[n]_{q}!}&=\frac{e_{q}(\frac{-t}{2})}{g^{(1)}_{\alpha}(it;q)}e_{q}(xt)\\
 & =\left(\sum_{n=0}^{\infty} \beta_{n,\alpha}(q)\frac{t^{n}}{[n]_{q}!}\right)\left(\sum_{n=0}^{\infty}\frac{(xt)^{n}}{[n]_{q}!}\right).
\end{split}
\end{equation*}
Hence
\begin{equation}\label{q333}
  \sum_{n=0}^{\infty}B^{(1)}_{n,\alpha}(x;q)\frac{t^{n}}{[n]_{q}!}= \sum_{n=0}^{\infty}\frac{t^{n}}{[n]_{q}!}\sum_{k=0}^{n}   \left[
                                                                \begin{array}{c}
                                                                  n \\
                                                                  k \\
                                                                \end{array}
                                                              \right]_{q}
              \beta_{n-k,\alpha}\,x^{k},\\
\end{equation}
where we applied the Cauchy product formula. Equating the $ nth $  power of $ t $ in (\ref{q333}), we obtain (\ref{q19}).}
\end{proof}
\begin{proposition}\label{ty}{\small
For ${\small n\in\mathbb{N}}$ and ${\small k\in\{1,2,3\}}$, the polynomials $B_{n,\alpha}^{(k)}(x;q)$  satisfy the $q$-difference equations
\begin{align}\label{q4504}
  D_{q,x}\,B^{(1)}_{n,\alpha}(x;q)&=[n]_{q}\,B^{(1)}_{n-1,\alpha}(x;q),\\
  D_{q^{-1},x}\,B^{(2)}_{n,\alpha}(x;q)&=[n]_{q}\,B^{(2)}_{n-1,\alpha}(x;q),\\
  \delta_{q,x}\,B^{(3)}_{n,\alpha}(x;q)&=[n]_{q}B^{(3)}_{n-1,\alpha}(x;q).
   \end{align}}
 \end{proposition}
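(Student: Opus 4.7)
The plan is to apply the appropriate $q$-difference operator in $x$ to each generating function in Definition \ref{q-Bernoulli}, exploit the fact that each $q$-exponential is an eigenfunction of its companion $q$-derivative, and then equate coefficients of like powers of $t$.

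First I would record the three eigenfunction identities that drive the proof:
\begin{equation*}
D_{q,x}\,e_{q}(xt)=t\,e_{q}(xt),\qquad D_{q^{-1},x}\,E_{q}(xt)=t\,E_{q}(xt),\qquad \delta_{q,x}\,\exp_{q}(xt)=t\,\exp_{q}(xt),
\end{equation*}
each of which follows by termwise differentiation of the series in (\ref{h8}) and the analogous series defining $\exp_q$, using $D_q x^n=[n]_q x^{n-1}$, $D_{q^{-1}} x^n=[n]_{q^{-1}} x^{n-1}$ (which rescales to $[n]_q q^{1-n}x^{n-1}$, combining correctly with the $q^{n(n-1)/2}$ weights in $E_q$), and $\delta_q x^n=[n]_q x^{n-1}$ in the symmetric case.

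Next, I would apply $D_{q,x}$ to both sides of (\ref{q14}). Since the factors $e_{q}(-t/2)$ and $g^{(1)}_{\alpha}(it;q)$ are independent of $x$, the left-hand side simply picks up a factor $t$, giving
\begin{equation*}
\sum_{n=0}^{\infty}D_{q,x}B^{(1)}_{n,\alpha}(x;q)\frac{t^{n}}{[n]_{q}!}=t\sum_{n=0}^{\infty}B^{(1)}_{n,\alpha}(x;q)\frac{t^{n}}{[n]_{q}!}=\sum_{n=1}^{\infty}[n]_{q}B^{(1)}_{n-1,\alpha}(x;q)\frac{t^{n}}{[n]_{q}!},
\end{equation*}
after an index shift using $[n]_q!/[n-1]_q!=[n]_q$. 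Matching the coefficient of $t^n/[n]_q!$ yields the first identity. The convergence of the generating series in a neighborhood of $t=0$ justifies termwise differentiation and term-by-term comparison.

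The cases $k=2$ and $k=3$ are carried out in exactly the same way, substituting $D_{q^{-1},x}$ into (\ref{q13}) with the $E_q$ eigenfunction identity, and $\delta_{q,x}$ into (\ref{q197}) with the $\exp_q$ identity. No obstacle of substance arises; the only point requiring care is that in each case one must choose the operator whose eigenfunction matches the $q$-exponential appearing in the $x$-dependent factor of the generating function. That pairing ($D_q\leftrightarrow e_q$, $D_{q^{-1}}\leftrightarrow E_q$, $\delta_q\leftrightarrow \exp_q$) is precisely the reason the statement distinguishes the three operators, and it is the single conceptual step on which the proof rests.
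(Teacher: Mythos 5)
Your proposal is correct and follows essentially the same route as the paper: apply the operator to the generating function, use the eigenfunction identity ($D_{q,x}e_q(xt)=te_q(xt)$, and its analogues for $E_q$ and $\exp_q$), shift the index, and equate coefficients of $t^n$. One small slip in your justification for $k=3$: $\delta_{q,x}x^{n}=q^{\frac{1-n}{2}}[n]_{q}x^{n-1}$ rather than $[n]_{q}x^{n-1}$, but the extra power of $q$ is absorbed by the weight $q^{\frac{n(n-1)}{4}}$ in the series for $\exp_{q}$, so the identity $\delta_{q,x}\exp_{q}(xt)=t\exp_{q}(xt)$ and hence the argument stand.
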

\begin{proof}{\small
We only prove the case ($k=1$) and the proofs of $(k=2,3)$ are similar.
Calculating the $q$-derivative of both sides of (\ref{q14}) with respect to the variable $ x $ and taking into consideration that
\begin{equation*}
D_{q,x}\,e_{q}(xt)=t\,e_{q}(xt),
\end{equation*}
we obtain
\begin{equation*}
 \frac{t e_{q}(xt)e_{q}(\frac{-t}{2})}{g^{(1)}_{\alpha}(it;q)}=\sum_{n=1}^{\infty}D_{q,x} B^{(1)}_{n,\alpha}(x;q)\frac{t^{n}}{[n]_{q}!}.
 \end{equation*}
Therefore,
\begin{equation}\label{q222}
  \sum_{n=0}^{\infty} B^{(1)}_{n,\alpha}(x;q)\frac{t^{n+1}}{[n]_{q}!}=\sum_{n=1}^{\infty}D_{q,x} B^{(1)}_{n,\alpha}(x;q)\frac{t^{n}}{[n]_{q}!}.
\end{equation}
Equating the corresponding $ nth $ power of $t$ in (\ref{q222}), we obtain (\ref{q4504}). }
\end{proof}
\begin{corollary}
{ \small
Let $n\in\mathbb{N}$ and $k$ be a positive integer such that  $k\leq n$. Then for $x\in\mathbb{C},$
\begin{align*}\label{q2203}
\begin{split}
  D^{k}_{q,x}\,\frac{B^{(1)}_{n,\alpha}(x;q)}{[n]_{q}!}&=\frac{B^{(1)}_{n-k,\alpha}(x;q) }{[n-k]_{q}!},\\
  D^{k}_{{\small q^{-1},x}}\,\frac{B^{(2)}_{n,\alpha}(x;q)}{[n]_{q}!}&=\frac{B^{(2)}_{n-k,\alpha}(x;q) }{[n-k]_{q}!},\\
 \delta^{k}_{q,x}\,\frac{B^{(3)}_{n,\alpha}(x;q)}{[n]_{q}!}&=\frac{B^{(3)}_{n-k,\alpha}(x;q)}{[n-k]_{q}!}.\end{split}
 \end{align*}
  }
\end{corollary}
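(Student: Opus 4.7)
The proof is a direct induction on $k$, bootstrapping off Proposition \ref{ty}. The plan is to divide each identity in Proposition \ref{ty} by $[n]_q!$, observe that the factor $[n]_q$ produced by the $q$-derivative cancels against the drop from $[n]_q!$ to $[n-1]_q!$, and then iterate.

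More precisely, for the first identity I would argue as follows. The base case $k=1$ is exactly Proposition \ref{ty} after dividing by $[n]_q!$: since $[n]_q! = [n]_q \cdot [n-1]_q!$, we have
\begin{equation*}
D_{q,x}\,\frac{B^{(1)}_{n,\alpha}(x;q)}{[n]_q!} = \frac{[n]_q\, B^{(1)}_{n-1,\alpha}(x;q)}{[n]_q\,[n-1]_q!} = \frac{B^{(1)}_{n-1,\alpha}(x;q)}{[n-1]_q!}.
\end{equation*}
For the inductive step, assume the formula holds for some $k < n$. Applying $D_{q,x}$ once more and using the base case with $n$ replaced by $n-k$,
\begin{equation*}
D^{k+1}_{q,x}\,\frac{B^{(1)}_{n,\alpha}(x;q)}{[n]_q!} = D_{q,x}\,\frac{B^{(1)}_{n-k,\alpha}(x;q)}{[n-k]_q!} = \frac{B^{(1)}_{n-k-1,\alpha}(x;q)}{[n-k-1]_q!},
\end{equation*}
which closes the induction.

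The cases $k=2,3$ are entirely analogous, using the second and third identities of Proposition \ref{ty} in place of the first, with $D_{q,x}$ replaced by $D_{q^{-1},x}$ and $\delta_{q,x}$ respectively. Since each of these operators is linear and acts on the variable $x$ only, the factor $1/[n]_q!$ simply passes through as a constant, and the same telescoping of $q$-integers occurs.

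There is essentially no obstacle here; the corollary is a purely formal consequence of Proposition \ref{ty}, and the only thing to watch is that the induction stops at $k = n$, where one recovers $B^{(k)}_{0,\alpha}(x;q)/[0]_q! = 1$, consistent with the fact that a further application of the relevant $q$-derivative would annihilate it.
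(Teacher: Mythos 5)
Your argument is correct and is exactly the paper's intended proof: the corollary is stated there to follow from Proposition \ref{ty} by mathematical induction, which is precisely your base-case-plus-iteration argument with the cancellation $[n]_q!=[n]_q\,[n-1]_q!$. The only cosmetic point is that your phrase ``the cases $k=2,3$'' should refer to the superscripts of $B^{(2)}_{n,\alpha}$ and $B^{(3)}_{n,\alpha}$ rather than to the order $k$ of the derivative, to avoid clashing with the induction variable.
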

\begin{proof}{\small
The proofs follow from Proposition \ref{ty} and the mathematical induction.}
\end{proof}
\begin{proposition}{\small
 For $|t|<\frac{1}{1-q}\min \{ j^{(1)}_{1,\alpha}, j^{(2)}_{1,\alpha},2\}$,
\begin{equation}\label{q76}
\sum_{n=0}^{\infty} B^{(1)}_{n,\alpha}(\frac{1}{2};q)\frac{t^{n}}{[n]_{q}!}=\frac{1}{g^{(2)}_{\alpha}(it;q)}.
\end{equation}
\begin{equation}\label{q9322}
\sum_{n=0}^{\infty} B^{(2)}_{n,\alpha}(\frac{1}{2};q)\frac{t^{n}}{[n]_{q}!}=\frac{1}{g^{(1)}_{\alpha}(it;q)}.
\end{equation}}
\end{proposition}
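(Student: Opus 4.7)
The plan is to substitute $x=\tfrac{1}{2}$ directly into the generating functions (\ref{q14}) and (\ref{q13}) and then invoke Lemma~\ref{gq} together with the reciprocal identity $e_{q}(x)E_{q}(-x)=1$. No new machinery beyond what has just been established is needed.

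For (\ref{q76}), I would start from (\ref{q14}) with $x=\tfrac{1}{2}$, which gives
\begin{equation*}
\sum_{n=0}^{\infty}B^{(1)}_{n,\alpha}(\tfrac{1}{2};q)\frac{t^{n}}{[n]_{q}!}=\frac{e_{q}(\tfrac{t}{2})\,e_{q}(\tfrac{-t}{2})}{g^{(1)}_{\alpha}(it;q)}.
\end{equation*}
By Lemma~\ref{gq} the factor $e_{q}(-t/2)/g^{(1)}_{\alpha}(it;q)$ may be replaced by $E_{q}(-t/2)/g^{(2)}_{\alpha}(it;q)$, so the right-hand side becomes $e_{q}(t/2)E_{q}(-t/2)/g^{(2)}_{\alpha}(it;q)$, which collapses to $1/g^{(2)}_{\alpha}(it;q)$ by the standard $q$-exponential identity $e_{q}(x)E_{q}(-x)=1$.

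The proof of (\ref{q9322}) is the mirror image. Setting $x=\tfrac{1}{2}$ in (\ref{q13}) yields
\begin{equation*}
\sum_{n=0}^{\infty}B^{(2)}_{n,\alpha}(\tfrac{1}{2};q)\frac{t^{n}}{[n]_{q}!}=\frac{E_{q}(\tfrac{t}{2})\,E_{q}(\tfrac{-t}{2})}{g^{(2)}_{\alpha}(it;q)},
\end{equation*}
after which applying Lemma~\ref{gq} in the reverse direction and using $E_{q}(t/2)e_{q}(-t/2)=1$ gives $1/g^{(1)}_{\alpha}(it;q)$.

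I expect no real obstacle. The only matter that deserves a brief comment is the domain of convergence: the restriction $|t|<\tfrac{1}{1-q}\min\{j^{(1)}_{1,\alpha},j^{(2)}_{1,\alpha},2\}$ is exactly what is required so that both series (\ref{q14}) and (\ref{q13}) converge and so that $e_{q}(\pm t/2)$ lie inside the disk $|x|<1/(1-q)$; on this common disk the equality of analytic functions implies the claimed equality of Taylor coefficients.
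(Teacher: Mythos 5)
Your proof is correct and follows essentially the same route as the paper: substitute $x=\tfrac{1}{2}$ into the generating functions and cancel via the relation between $g^{(1)}_{\alpha}$ and $g^{(2)}_{\alpha}$ (you cite Lemma~\ref{gq} plus $e_{q}(x)E_{q}(-x)=1$, while the paper cites the equivalent identity (\ref{q1698}) from that lemma's proof). No issues.
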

\begin{proof}{\small
Set $  x=\frac{1}{2} $  in  (\ref{q14}), we obtain
\begin{equation}\label{q125}
 \frac{e_{q}(\frac{t}{2})e_{q}(\frac{-t}{2})}{g^{(1)}_{\alpha}(it;q)}=\sum_{n=0}^{\infty}B^{(1)}_{n,\alpha}(\frac{1}{2};q)\frac{t^{n}}{[n]_{q}!}.
\end{equation}
Substituting from (\ref{q1698}) into (\ref{q125}),  we obtain (\ref{q76}). Similarly, we can prove (\ref{q9322}).}
\end{proof}
{\small
The following Lemma from  \cite{marim} gives  the reciprocal of $ \exp_{q}(z)$  in a certain domain.}
{\small
\begin{lemma}\label{ii}
Let $z\in \Omega$,  $\displaystyle \Omega:= \{ z\in \mathbb{C}: \, |1-exp_q(-z)|< 1\}$. Then
\begin{equation*}
\dfrac{1}{exp_q(z)}:=\sum_{n=0}^{\infty} c_n\,z^{n},
\end{equation*}
where
\begin{equation}\label{q6403}
 c_n=\, \sum_{k=1}^{n} (-1)^{k}\sum_{s_1+s_2+\ldots+s_k=n\atop  s_i>0\,(i=1,\ldots,k) } \dfrac{ q^{ \sum_{i=1}^k s_i(s_i-1)/4 }}{[s_1]_q! [s_2]_q!\ldots [s_{k}]_q!}.
 \end{equation}
\end{lemma}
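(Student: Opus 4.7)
The plan is to derive the expansion by combining the geometric series for $1/(1-w)$ with a $k$-fold Cauchy product applied to the power series of $\exp_q(z)-1$. This is the direct $q$-analog of how one would compute $1/e^z$ as a formal power series in $z$ through the expansion of $1/(1-(1-e^z))$.

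First, since $\exp_q$ is entire with $\exp_q(0)=1$, a neighborhood of the origin satisfies $|1-\exp_q(z)|<1$; the set $\Omega$ in the statement plays this role (up to the sign convention used by the authors). On such a region the geometric series yields
$$
\frac{1}{\exp_q(z)}=\frac{1}{1-\bigl(1-\exp_q(z)\bigr)}=\sum_{k=0}^{\infty}(-1)^{k}\bigl(\exp_q(z)-1\bigr)^{k}.
$$
Substituting the defining series $\exp_q(z)-1=\sum_{n\geq 1}\frac{q^{n(n-1)/4}}{[n]_q!}z^{n}$ and applying the $k$-fold Cauchy product produces
$$
\bigl(\exp_q(z)-1\bigr)^{k}=\sum_{s_1,\dots,s_k\geq 1}\frac{q^{\sum_{i=1}^{k}s_i(s_i-1)/4}}{[s_1]_q!\cdots[s_k]_q!}\,z^{s_1+\cdots+s_k}.
$$
Interchanging the two summations and collecting terms of total degree $n=s_1+\cdots+s_k$ gives a power series in $z$ whose coefficient of $z^{n}$, for $n\geq 1$, is the expression claimed in (\ref{q6403}); since each $s_i\geq 1$ forces $k\leq n$, the $k$-index is automatically restricted to $1\leq k\leq n$, and the term $k=0$ contributes only $c_0=1$.

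The only technical point requiring care is the justification of this rearrangement. It follows from absolute convergence on compact subsets of $\Omega$: on each such subset one has a uniform bound $|1-\exp_q(z)|\leq r<1$, so the outer geometric series is dominated by $\sum r^{k}$, while the inner multinomial expansion consists of nonnegative terms once absolute values are taken, permitting a Tonelli-type rearrangement term by term. I anticipate no substantive obstacle beyond this book-keeping, since the combinatorics of compositions of $n$ into $k$ positive parts matches the structure of the inner double sum in (\ref{q6403}) verbatim.
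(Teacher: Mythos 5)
Your combinatorial core is correct and is surely the intended argument: writing $1/\exp_{q}(z)=\sum_{k\ge 0}(-1)^{k}\bigl(\exp_{q}(z)-1\bigr)^{k}$, expanding the $k$-th power by the Cauchy product of the series $\sum_{n\ge 1}q^{n(n-1)/4}z^{n}/[n]_{q}!$, and collecting the compositions $s_{1}+\dots+s_{k}=n$ with $s_{i}\ge 1$ reproduces exactly the coefficients in (\ref{q6403}) (with $c_{0}=1$ coming from $k=0$). Note that the paper itself gives no proof of this lemma --- it is quoted from the earlier work of Mansour and Al-Towalib --- so there is no internal argument to compare against; your derivation of the formula is the natural one.

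There is, however, a genuine gap in your justification of the rearrangement. A Tonelli-type argument needs finiteness of the sum of the absolute values of \emph{all} terms of the expanded multiple series, and that sum is $\sum_{k}\bigl(\exp_{q}(|z|)-1\bigr)^{k}$, not $\sum_{k}|1-\exp_{q}(z)|^{k}$; the uniform bound $|1-\exp_{q}(z)|\le r<1$ on a compact subset of $\Omega$ is therefore irrelevant to the domination you invoke. Absolute convergence of the expanded series holds only where $\exp_{q}(|z|)<2$, which is in general a strictly smaller disk: already in the classical limit, a point with $e^{z}=1$ and $|z|$ large satisfies $|1-e^{z}|<1$ while $\sum_{k}(e^{|z|}-1)^{k}$ diverges. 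The repair is to carry out the rearrangement only for $|z|$ small enough that $\exp_{q}(|z|)<2$, conclude that $\sum_{n}c_{n}z^{n}$ is the Maclaurin series of $1/\exp_{q}(z)$, and then extend the identity by analyticity to the largest disk about the origin containing no zero of $\exp_{q}$ (which exists, since $\exp_{q}$ is a nonpolynomial entire function of order zero and hence has zeros); your proposal as written does not contain this step. Relatedly, the discrepancy between the stated domain $|1-\exp_{q}(-z)|<1$ and the set $|1-\exp_{q}(z)|<1$ that your expansion actually requires cannot be dismissed as a ``sign convention'': for this $q$-exponential $\exp_{q}(z)\exp_{q}(-z)\neq 1$, so the two sets are genuinely different. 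That mismatch is a defect inherited from the quoted statement, but a complete proof must either work with the set it actually needs or explain why the stated $\Omega$ (or at least its component containing the origin) lies inside the region of validity.
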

}
\begin{proposition}{\small
For $ Re\,\alpha >-1 $ and $t\in\displaystyle \Omega= \{ t\in \mathbb{C}: \, |1-exp_q(-t)|< 1\}$,
\begin{equation}\label{q7182}
\frac{1}{g^{(3)}_{\alpha}(it;q)}=\sum_{n=0}^{\infty}t^{n} \sum_{k=0}^{n} \frac{(-1)^{k}\,c_{k}}{2^{k}[n-k]_{q}!}\beta^{(3)}_{n-k,\alpha}(q),
\end{equation}
where $c_{n}$ is defined in (\ref{q6403}).
}
\end{proposition}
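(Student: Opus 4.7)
The plan is to read off the reciprocal of $g^{(3)}_\alpha(it;q)$ directly from the generating function of the numbers $\beta^{(3)}_{n,\alpha}(q)$ and from the expansion of $1/\exp_q$ provided by Lemma \ref{ii}, and then combine them by a Cauchy product.

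First I would start from the defining identity
\[
\frac{\exp_{q}(-t/2)}{g^{(3)}_{\alpha}(it;q)}=\sum_{n=0}^{\infty}\beta^{(3)}_{n,\alpha}(q)\frac{t^{n}}{[n]_{q}!},
\]
valid near $t=0$, and rewrite it as
\[
\frac{1}{g^{(3)}_{\alpha}(it;q)}=\frac{1}{\exp_{q}(-t/2)}\sum_{n=0}^{\infty}\beta^{(3)}_{n,\alpha}(q)\frac{t^{n}}{[n]_{q}!}.
\]
Next I would apply Lemma \ref{ii} with $z$ replaced by $-t/2$, which gives
\[
\frac{1}{\exp_{q}(-t/2)}=\sum_{k=0}^{\infty}c_{k}\left(\tfrac{-t}{2}\right)^{k}=\sum_{k=0}^{\infty}\frac{(-1)^{k}c_{k}}{2^{k}}\,t^{k},
\]
where the coefficients $c_{k}$ are the ones from \eqref{q6403} (with $c_{0}=1$). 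Finally I would multiply the two power series by the Cauchy product formula and collect like powers of $t$ to obtain
\[
\frac{1}{g^{(3)}_{\alpha}(it;q)}=\sum_{n=0}^{\infty}t^{n}\sum_{k=0}^{n}\frac{(-1)^{k}c_{k}}{2^{k}\,[n-k]_{q}!}\,\beta^{(3)}_{n-k,\alpha}(q),
\]
which is \eqref{q7182}.

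The only nontrivial point is justifying that the manipulation is valid on the claimed domain $\Omega=\{t\in\mathbb{C}:|1-\exp_{q}(-t)|<1\}$. For this I would observe that on $\Omega$ the expansion of Lemma \ref{ii} (with argument $-t/2$) converges absolutely, and the generating series for $\beta^{(3)}_{n,\alpha}(q)$ converges absolutely in a disc around $0$ dictated by $|t|<\frac{2q^{1/4}j^{(3)}_{1,\alpha}}{1-q}$, so both series can be multiplied termwise; the identity in \eqref{q7182} then extends by analytic continuation to all of $\Omega$ since $1/g^{(3)}_{\alpha}(it;q)$ is holomorphic there, provided $\Omega$ contains no zero of $g^{(3)}_{\alpha}(it;q)$, which is the case near $0$. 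This domain/convergence bookkeeping is the only real obstacle; the algebraic identity itself is a one-line Cauchy product once Lemma \ref{ii} is in hand.
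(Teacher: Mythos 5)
Your proposal matches the paper's own proof essentially line for line: set $x=0$ in the generating function \eqref{q197} to get $\exp_q(-t/2)/g^{(3)}_{\alpha}(it;q)=\sum_n \beta^{(3)}_{n,\alpha}(q)t^n/[n]_q!$, expand $1/\exp_q(-t/2)$ via Lemma \ref{ii}, and multiply by the Cauchy product formula. The only difference is your closing remark on convergence and the domain $\Omega$, which the paper passes over silently, so your argument is correct and follows the same route.
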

\begin{proof}{\small
Substitute with $ x=0$ in  Equation (\ref{q197}). This gives
\begin{equation*}
   \frac{\exp_{q}(\frac{-t}{2})}{g^{(3)}_{\alpha}(it;q)}=\sum_{n=0}^{\infty}\beta^{(3)}_{n,\alpha}(q)\frac{t^{n}}{[n]_{q}!}.
\end{equation*}
From Lemma \ref{ii},
\begin{equation*}
\begin{split}
   \frac{1}{g^{(3)}_{\alpha}(it;q)}&=\frac{1}{\exp_{q}(\frac{-t}{2})}\sum_{n=0}^{\infty}\beta^{(3)}_{n,\alpha}(q)\frac{t^{n}}{[n]_{q}!}\\&=
 \left( \sum_{n=0}^{\infty}c_{n}\frac{(-1)^{n}t^{n}}{2^{n}}\right)\left( \sum_{n=0}^{\infty}\beta^{(3)}_{n,\alpha}(q)\frac{t^{n}}{[n]_{q}!}\right).
   \end{split}
\end{equation*}
Applying the Cauchy product formula, we obtain (\ref{q7182}) and  completes the proof.}
\end{proof}

\begin{theorem}{\small
For $ n\in\mathbb{N}_{0} $ and $x\in\mathbb{C}$,
\begin{equation*}
  \sum  _{k =0}^{n}\left[
                                                                \begin{array}{c}
                                                                  n \\
                                                                  k \\
                                                                \end{array}
                                                              \right]_{q}
             B^{(1)}_{k,\alpha}  (-x;q) B^{(2)}_{n-k,\alpha} (x;q)=  \sum  _{k =0}^{n}\left[
                                                                \begin{array}{c}
                                                                  n \\
                                                                  k \\
                                                                \end{array}
                                                              \right]_{q}
                        \beta _{k,\alpha}(q)  \beta _{n-k,\alpha} (q).
\end{equation*}
}
\end{theorem}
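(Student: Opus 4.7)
The plan is to recognize the left-hand side as the coefficient of $t^{n}/[n]_{q}!$ in the Cauchy product of the two generating functions defining $B^{(1)}_{k,\alpha}(-x;q)$ and $B^{(2)}_{n-k,\alpha}(x;q)$, and to show that this product equals the square of the generating function for the generalized $q$-Bernoulli numbers $\beta_{n,\alpha}(q)$. Equating coefficients will then yield the claimed identity.

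Concretely, from Definition \ref{q-Bernoulli} we have
\begin{equation*}
\sum_{n=0}^{\infty}B^{(1)}_{n,\alpha}(-x;q)\frac{t^{n}}{[n]_{q}!}=\frac{e_{q}(-xt)\,e_{q}(-t/2)}{g^{(1)}_{\alpha}(it;q)},\qquad
\sum_{n=0}^{\infty}B^{(2)}_{n,\alpha}(x;q)\frac{t^{n}}{[n]_{q}!}=\frac{E_{q}(xt)\,E_{q}(-t/2)}{g^{(2)}_{\alpha}(it;q)}.
\end{equation*}
Applying the Cauchy product formula to the product of these two series, the coefficient of $t^{n}/[n]_{q}!$ is precisely $\sum_{k=0}^{n}\left[\begin{smallmatrix}n\\k\end{smallmatrix}\right]_{q}B^{(1)}_{k,\alpha}(-x;q)B^{(2)}_{n-k,\alpha}(x;q)$, the left-hand side of the theorem.

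The next step is to simplify the product on the right. Using the standard identity $e_{q}(u)E_{q}(-u)=1$ with $u=-xt$ (equivalently, $(x(1-q);q)_{\infty}(-x(1-q);q)_{\infty}/1=1$ via (\ref{h8})), the factor $e_{q}(-xt)E_{q}(xt)$ collapses to $1$. Hence the product of the two generating functions reduces to
\begin{equation*}
\frac{e_{q}(-t/2)\,E_{q}(-t/2)}{g^{(1)}_{\alpha}(it;q)\,g^{(2)}_{\alpha}(it;q)}=\frac{e_{q}(-t/2)}{g^{(1)}_{\alpha}(it;q)}\cdot\frac{E_{q}(-t/2)}{g^{(2)}_{\alpha}(it;q)}.
\end{equation*}
By Lemma \ref{gq} (and the definition (\ref{q66})), each factor equals $\sum_{n=0}^{\infty}\beta_{n,\alpha}(q)\,t^{n}/[n]_{q}!$, so the product is the square of this series.

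A final Cauchy product shows that the coefficient of $t^{n}/[n]_{q}!$ in this square is $\sum_{k=0}^{n}\left[\begin{smallmatrix}n\\k\end{smallmatrix}\right]_{q}\beta_{k,\alpha}(q)\beta_{n-k,\alpha}(q)$, the right-hand side. Equating coefficients with the Cauchy product computed above finishes the proof. There is essentially no obstacle here; the only place that requires care is to verify the cancellation $e_{q}(-xt)E_{q}(xt)=1$ and to invoke Lemma \ref{gq} so that the two \emph{different} factors $e_{q}(-t/2)/g^{(1)}_{\alpha}$ and $E_{q}(-t/2)/g^{(2)}_{\alpha}$ can be identified with the \emph{same} generating series.
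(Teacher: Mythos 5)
Your proposal is correct and follows essentially the same route as the paper: multiply the generating function of $B^{(1)}_{n,\alpha}(-x;q)$ (obtained by replacing $x$ with $-x$ in (\ref{q14})) by that of $B^{(2)}_{n,\alpha}(x;q)$, cancel via $e_{q}(-xt)E_{q}(xt)=1$, identify the remaining product with the square of the $\beta_{n,\alpha}(q)$ generating series using (\ref{q66}), and equate coefficients through the Cauchy product. No meaningful differences from the paper's argument.
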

\begin{proof}{\small
If we replace $ x $ by $ -x $ in (\ref{q14}), then
\begin{equation}\label{q22}
   \frac{e_{q}(-xt)e_{q}(\frac{-t}{2})}{g^{(1)}_{\alpha}(it;q)}=\sum_{n=0}^{\infty}B^{(1)}_{n,\alpha}(-x;q)\frac{t^{n}}{[n]_{q}!}.
\end{equation}
Since $ e_{q}(-xt)E_{q}(xt)=1,$ then multiplying (\ref{q13}) by (\ref{q22}) gives
\begin{equation*}
   \frac{E_{q}(\frac{-t}{2})e_{q}(\frac{-t}{2})}{g^{(2)}_{\alpha}(it;q)g^{(1)}_{\alpha}(it;q)}=\left(\sum_{n=0}^{\infty}
   B^{(1)}_{n,\alpha}(-x;q)\frac{t^{n}}{[n]_{q}!}\right)\left(\sum_{n=0}^{\infty}B^{(2)}_{n,\alpha}(x;q)\frac{t^{n}}{[n]_{q}!}\right).
\end{equation*}
From (\ref{q66}), we obtain
\begin{equation*}
 \left( \sum_{n=0}^{\infty} \beta_{n,\alpha}(q)\frac{t^{n}}{[n]_{q}!}\right)^{2}=\sum_{n=0}^{\infty}\frac{t^{n}}{[n]_{q}!}\sum_{k=0}^{n}   \left[
                                                                \begin{array}{c}
                                                                  n \\
                                                                  k \\
                                                                \end{array}
                                                              \right]_{q}
          B^{(1)}_{k,\alpha}  (-x;q) B^{(2)}_{n-k,\alpha} (x;q).
\end{equation*}
Hence
\begin{equation}\label{q24}
  \sum_{n=0}^{\infty}\frac{t^{n}}{[n]_{q}!}\sum_{k=0}^{n}   \left[
                                                                \begin{array}{c}
                                                                  n \\
                                                                  k \\
                                                                \end{array}
                                                              \right]_{q}
           \beta_{k,\alpha}(q)   \beta_{n-k,\alpha}(q) =  \sum_{n=0}^{\infty}\frac{t^{n}}{[n]_{q}!}\sum_{k=0}^{n}   \left[
                                                                \begin{array}{c}
                                                                  n \\
                                                                  k \\
                                                                \end{array}
                                                              \right]_{q}
          B^{(1)}_{k,\alpha}  (-x;q) B^{(2)}_{n-k,\alpha} (x;q).
\end{equation}
So, equating the $n$th power of $ t $ in (\ref{q24}), we obtain the required result.}
\end{proof}
\begin{proposition}\label{Ber.q and 1/q}{\small
For  $ n\in\mathbb{N}_{0},$  $ x\in\mathbb{C}$ and $q\neq 0, $
\begin{equation}\label{q217}
   B^{(2)}_{n,\alpha}  (x;q)=q^{\frac{n(n-1)}{2}} B^{(1)}_{n,\alpha} (x;\frac{1}{q}).
\end{equation}
In particular,
\begin{equation}\label{q1012}
  \beta_{n,\alpha}  (q)=q^{\frac{n(n-1)}{2}} \beta_{n,\alpha} (\frac{1}{q}).
\end{equation}}
\end{proposition}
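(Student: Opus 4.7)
The proof is via generating functions. Using $[k]_{1/q}=q^{-(k-1)}[k]_q$, which yields $[n]_{1/q}!=q^{-n(n-1)/2}[n]_q!$, the target identity (\ref{q217}) is equivalent to the formal power series equality
\[
\sum_{n\geq 0}B^{(2)}_{n,\alpha}(x;q)\,\frac{t^n}{[n]_q!} \;=\; \sum_{n\geq 0}B^{(1)}_{n,\alpha}(x;1/q)\,\frac{t^n}{[n]_{1/q}!}.
\]
The particular case (\ref{q1012}) then follows immediately by setting $x=0$ and invoking the definition (\ref{q66}) of $\beta_{n,\alpha}$, so the whole proof reduces to establishing this displayed equality.

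By (\ref{q14}) with $q$ replaced by $1/q$, the right-hand side equals $e_{1/q}(xt)\,e_{1/q}(-t/2)/g^{(1)}_\alpha(it;1/q)$. A short calculation using $(1/q;1/q)_n=(-1)^n q^{-n(n+1)/2}(q;q)_n$ and $(1-1/q)^n=(-1)^n q^{-n}(1-q)^n$ applied to the series in (\ref{h8}) yields the standard identity $e_{1/q}(u)=E_q(u)$, so the numerator becomes $E_q(xt)E_q(-t/2)$, which matches that of (\ref{q13}). The entire problem therefore collapses to proving the single identity
\[
g^{(1)}_\alpha(it;1/q)\;=\;g^{(2)}_\alpha(it;q).
\]

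I would prove this remaining identity by expanding both sides as power series in $t$ via the series for $J^{(k)}_\alpha$ and the definitions of the $g^{(k)}_\alpha$. On the left, converting the Pochhammer symbols at base $1/q^2$ to base $q^2$ via $(1/q^2;1/q^2)_n=(-1)^n q^{-n(n+1)}(q^2;q^2)_n$ and the analogous transformation for $((1/q^2)^{\alpha+1};1/q^2)_n$ yields an extra factor $q^{2n(n+1)+2n\alpha}$; when combined with $(1-1/q)^{2n}=q^{-2n}(1-q)^{2n}$, this produces exactly the $q^{2n(n+\alpha)}(1-q)^{2n}$ weight characteristic of $J^{(2)}_\alpha(it(1-q);q^2)$, so the two power series agree term by term. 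The main obstacle is matching the $n$-independent prefactors: the contributions $(1+1/q)^\alpha$, $(1-1/q)^\alpha$, $(it/2)^{-\alpha}$, and $\Gamma_{1/q^2}(\alpha+1)$ must collapse, via the $q$-gamma identity $\Gamma_{q^2}(\alpha+1)=(q^2;q^2)_\infty(q^{2\alpha+2};q^2)_\infty^{-1}(1-q^2)^{-\alpha}$ and its $1/q^2$-analogue, to the $(1+q)^\alpha\,\Gamma_{q^2}(\alpha+1)$ prefactor of $g^{(2)}_\alpha(it;q)$. The delicate point is that every $(-1)^\alpha$ and $q^{2\alpha}$ factor arising from $(1-1/q^2)=-(1-q^2)/q^2$ must cancel exactly; to handle this cleanly I would arrange the bookkeeping so that the infinite products appear only through their well-defined $\Gamma_{q^2}$ combinations rather than being manipulated individually.
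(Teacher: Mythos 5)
Your proposal is correct and follows essentially the same route as the paper: replace $q$ by $1/q$ in (\ref{q14}), use $e_{1/q}=E_{q}$ and $[n]_{1/q}!=q^{-n(n-1)/2}[n]_{q}!$, and reduce everything to the identity $g^{(1)}_{\alpha}(it;1/q)=g^{(2)}_{\alpha}(it;q)$, which the paper proves by the same base-inversion of the $q$-shifted factorials, via $(a;q^{-1})_{n}=(a^{-1};q)_{n}(-a)^{n}q^{-n(n-1)/2}$. The prefactor matching you flag as delicate is actually a non-issue: by definition $g^{(k)}_{\alpha}(it;q)$ is the normalized series $\mathcal{J}^{(k)}_{\alpha}$ whose $\alpha$-dependent constants cancel identically against the Bessel normalization for any base, so $g^{(1)}_{\alpha}(it;1/q)=\sum_{n\geq 0}\frac{(1-q^{-1})^{2n}(t/2)^{2n}}{(q^{-2};q^{-2})_{n}(q^{-2\alpha-2};q^{-2})_{n}}$ and only the term-by-term conversion you already carried out is needed.
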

\begin{proof}{\small
Replacing $ q$ by $\frac{1}{q}$ on the generating function in (\ref{q14}) and use  $ E_{q}(x)=e_{\frac{1}{q}}(x)$,  we obtain
\begin{equation}\label{g218}
   \frac{E_{q}(xt)E_{q}(\frac{-t}{2})}{g^{(1)}_{\alpha}(it;\frac{1}{q})}=\sum_{n=0}^{\infty}B^{(1)}_{n,\alpha}(x;\frac{1}{q})
   \frac{t^{n}}{[n]_{\frac{1}{q}}!}.
\end{equation}
Since
\begin{equation*}\label{q908}
\begin{split}
  g_{\alpha}^{(1)}(it;\frac{1}{q})&=\sum_{n=0}^{\infty}\frac{(1-q^{-1})^{2n}(\frac{t}{2})^{2n}}
 {(q^{-2};q^{-2})_{n}(q^{-2\alpha-2};q^{-2})_{n}}\\&=\sum_{n=0}^{\infty}\frac{(1-q)^{2n}q^{2n(n+\alpha )(\frac{t}{2})^{2n}}}{(q^{2},q^{2\alpha+2};q^{2})_{n}}=g_{\alpha}^{(2)}(it;q),
 \end{split}
\end{equation*}
where we used the identity $(a;q^{-1})_{n}=(a^{-1};q)_{n}(-a)^{n}q^{-\frac{n(n-1)}{2}}$. Since $ [n]_{1/q}!=q^{\frac{n(1-n)}{2}}[n]_{q}!$, then
(\ref{g218}) takes the form

\begin{equation*}
 \frac{E_{q}(xt)E_{q}(\frac{-t}{2})}{g^{(2)}_{\alpha}(it;q)}=\sum_{n=0}^{\infty}B^{(1)}_{n,\alpha}(x;\frac{1}{q})q^{\frac{n(n-1)}{2}}\frac{t^{n}}{[n]_{q}!}.
\end{equation*}
Therefore,
\begin{equation}\label{q225}
   \sum_{n=0}^{\infty}B^{(2)}_{n,\alpha}(x;q)\frac{t^{n}}{[n]_{q}!}=\sum_{n=0}^{\infty}B^{(1)}_{n,\alpha}(x;\frac{1}{q})\frac{q^{\frac{n(n-1)}{2}}t^{n}}{[n]_{q}!}.
\end{equation}
Equating  the coefficients of $ t^{n}$ in (\ref{q225}) gives (\ref{q217}) and substituting with $ x=0 $ into (\ref{q217}) yields directly (\ref{q1012}).}
\end{proof}
{\small Al-Salam, in \cite{AL-Salam},  introduced the polynomials
\begin{equation}\label{q25}
  H_{n}(x):=\sum_{k=0}^{n} \,                          \left[
                                                                \begin{array}{c}
                                                                  n \\
                                                                  k \\
                                                                \end{array}
                                                              \right]_{q}\,x^{k} ,\quad G_{n}(x):=\sum_{k=0}^{n} \, \left[
                                                                \begin{array}{c}
                                                                  n \\
                                                                  k \\
                                                                \end{array}
                                                              \right]_{q}\, q^{k^{2}-nk}  x^{k}.
\end{equation}
 He also proved that
\begin{equation}\label{q26}
  E_{q}(x) E_{q}(-x)=\sum_{n=0}^{\infty} q^{\frac{n(n-1)}{2}} G_{n}(-1)\frac{x^{n}}{[n]_{q}!} ,\quad x \in \,\mathbb{C},
\end{equation}
\begin{equation}\label{q27}
   e_{q}(x) e_{q}(-x)=\sum_{n=0}^{\infty}  H_{n}(-1)\frac{x^{n}}{[n]_{q}!} ,\quad |x|\,<\,\frac{1}{1-q}.
\end{equation}}
\quad
\quad
{\small The following theorem introduces connection relations between  the polynomials $B^{(1)}_{n,\alpha}(x;q)$ and $B^{(2)}_{n,\alpha}(x;q)$.}
\begin{theorem}{\small
For $ n \in \mathbb{N}_{0} $,
\begin{align}\label{q29}
   B^{(1)}_{n,\alpha}(x;q)&=\sum_{k=0}^{n} \, \left[
                                                                \begin{array}{c}
                                                                  n \\
                                                                  k \\
                                                                \end{array}
                                                              \right]_{q}\, x^{k}H_{k}(-1)  B^{(2)}_{n-k,\alpha}(x;q),\\
  B^{(2)}_{n,\alpha}(x;q)&=\sum_{k=0}^{n} \, \left[
                                                                \begin{array}{c}
                                                                  n \\
                                                                  k \\
                                                                \end{array}
                                                              \right]_{q}\, q^{\frac{k(k-1)}{2}}x^{k}G_{k}(-1)   B^{(1)}_{n-k,\alpha}(x;q).
\end{align}}
\end{theorem}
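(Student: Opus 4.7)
The plan is to relate the two generating functions using Lemma \ref{gq} together with the Al-Salam expansions (\ref{q26}) and (\ref{q27}), then expand via the Cauchy product. The key algebraic observation is that $e_q(xt)E_q(-xt)=1$ and $E_q(xt)e_q(-xt)=1$, so multiplying the generating function of one family by a judicious factor of $1$ inserts the generating function of the other family together with a factor $e_q(xt)e_q(-xt)$ or $E_q(xt)E_q(-xt)$.

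Concretely, for the first identity I would start from (\ref{q14}) and use Lemma \ref{gq} to write
\begin{equation*}
\sum_{n=0}^{\infty} B^{(1)}_{n,\alpha}(x;q)\frac{t^{n}}{[n]_{q}!}=\frac{e_{q}(xt)e_{q}(-t/2)}{g^{(1)}_{\alpha}(it;q)}=e_{q}(xt)\,\frac{E_{q}(-t/2)}{g^{(2)}_{\alpha}(it;q)}.
\end{equation*}
Inserting the factor $E_{q}(xt)e_{q}(-xt)=1$ and recognizing the generating function (\ref{q13}) gives
\begin{equation*}
\sum_{n=0}^{\infty} B^{(1)}_{n,\alpha}(x;q)\frac{t^{n}}{[n]_{q}!}=e_{q}(xt)e_{q}(-xt)\,\frac{E_{q}(xt)E_{q}(-t/2)}{g^{(2)}_{\alpha}(it;q)}=\bigl(e_{q}(xt)e_{q}(-xt)\bigr)\sum_{n=0}^{\infty} B^{(2)}_{n,\alpha}(x;q)\frac{t^{n}}{[n]_{q}!}.
\end{equation*}
Applying (\ref{q27}) with $x$ replaced by $xt$ expands $e_{q}(xt)e_{q}(-xt)=\sum_{n\ge 0}H_{n}(-1)x^{n}t^{n}/[n]_{q}!$, and the Cauchy product plus comparison of coefficients of $t^{n}$ yields (\ref{q29}).

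For the second identity I would proceed symmetrically: start from (\ref{q13}), use Lemma \ref{gq} to replace $E_{q}(-t/2)/g^{(2)}_{\alpha}(it;q)$ by $e_{q}(-t/2)/g^{(1)}_{\alpha}(it;q)$, insert $e_{q}(xt)E_{q}(-xt)=1$, and recognize (\ref{q14}) to obtain
\begin{equation*}
\sum_{n=0}^{\infty} B^{(2)}_{n,\alpha}(x;q)\frac{t^{n}}{[n]_{q}!}=\bigl(E_{q}(xt)E_{q}(-xt)\bigr)\sum_{n=0}^{\infty} B^{(1)}_{n,\alpha}(x;q)\frac{t^{n}}{[n]_{q}!}.
\end{equation*}
Now (\ref{q26}) with $x \to xt$ gives $E_{q}(xt)E_{q}(-xt)=\sum_{n\ge 0}q^{n(n-1)/2}G_{n}(-1)x^{n}t^{n}/[n]_{q}!$, and the Cauchy product produces the second claimed relation.

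No step is really an obstacle; the only thing one must be careful about is validating the product identities $e_{q}(z)E_{q}(-z)=1$ (immediate from the definitions in (\ref{h8})) and tracking the region of convergence: the $e_q$ factors require $|xt|<1/(1-q)$, while the $g^{(k)}_\alpha$ factors are controlled by $|t|<j^{(k)}_{1,\alpha}/(1-q)$, which together cover a neighborhood of $t=0$ sufficient to justify multiplying the power series and equating coefficients.
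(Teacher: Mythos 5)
Your proposal is correct and follows essentially the same route as the paper: rewrite the $B^{(1)}$ generating function, via the relation between $g^{(1)}_{\alpha}$ and $g^{(2)}_{\alpha}$ (Lemma \ref{gq}, equivalently (\ref{q1698})) and the identity $E_{q}(xt)e_{q}(-xt)=1$, as the $B^{(2)}$ generating function times $e_{q}(xt)e_{q}(-xt)$, then expand with Al-Salam's formulas (\ref{q26})--(\ref{q27}) and equate coefficients through the Cauchy product. The symmetric argument for the second identity and the convergence remarks also match the paper's treatment.
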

\begin{proof}{\small
Since ${\small E_{q}(xt) e_{q}(-xt)= 1,\,|xt|<\frac{1}{1-q},}$ then  from (\ref{q1698}), the generating function of $ B^{(1)}_{n,\alpha}(x;q) $ can be represented as
\begin{equation*}
\begin{split}
  \frac{e_{q}(xt)e_{q}(\frac{-t}{2})}{g^{(1)}_{\alpha}(it;q)}=\frac{E_{q}(xt) E_{q}(\frac{-t}{2})}{g^{(2)}_{\alpha}(it;q)}e_{q}(xt)e_{q}(-xt).
\end{split}
\end{equation*}
From (\ref{q14}), (\ref{q13}) and (\ref{q27}), we obtain
\begin{equation} \label{q30}
\begin{split}
  \sum_{n=0}^{\infty}B^{(1)}_{n,\alpha}(x;q)\frac{t^{n}}{[n]_{q}!}&= \left(\sum_{n=0}^{\infty}B^{(2)}_{n,\alpha}(x;q)\frac{t^{n}}{[n]_{q}!}\right)\left(\sum_{n=0}^{\infty}  H_{n}(-1)\frac{(xt)^{n}}{[n]_{q}!}\right)
  \\& =\sum_{n=0}^{\infty} \frac{t^{n}}{[n]_{q}!} \sum_{k=0}^{n} \, \left[
                                                                \begin{array}{c}
                                                                  n \\
                                                                  k \\
                                                                \end{array}
                                                              \right]_{q}\, x^{k}H_{k}(-1)  B^{(2)}_{n-k,\alpha}(x;q).\end{split}
\end{equation}
Therefore, equating the coefficients of the $ nth $ power of $ t $   in the series of the outside
parts of  (\ref{q30}) gives (\ref{q29}). The proof for $B^{(2)}_{n,\alpha}(x;q)$ follows similarly from
the generating function of  $ B^{(2)}_{n,\alpha}(x;q) $  and the identity (\ref{q26}), and is omitted.}
\end{proof}
\begin{theorem}{\small
Let $n$ be a positive integer and $x,\,\alpha $ be a complex numbers such that\\ $Re\,\alpha>-1 $.  Then
\begin{align}\label{q102}
\begin{split}
 \sum_{k = 0}^{[\frac{n}{2}]} \, \frac{ (1-q)^{2k} \,B^{(1)}_{n-2k,\alpha}(-\frac{x}{2};q)}{2^{2k}\,[n-2k]_{q}!\, (q^{2},q^{2n+\alpha};q^{2})_{k}}&=\frac{(-1/2)^{n}}{[n]_{q}!} H_{n}(x),\\
 \sum_{k =0 }^{[\frac{n}{2}]} \, \frac{ (1-q)^{2k} q^{2k(k+\alpha)}\,B^{(2)}_{n-2k,\alpha}(-\frac{x}{2};q)}{2^{2k}\,[n-2k]_{q}!\, (q^{2},q^{2n+\alpha};q^{2})_{k}}&=\frac{(-1/2)^{n}}{[n]_{q}!}q^{\frac{n(n-1)}{2}} G_{n}(x),\\
  \sum_{k = 0}^{[\frac{n}{2}]} \, \frac{ (1-q)^{2k} q^{k^{2}+k/2}\,B^{(3)}_{n-2k,\alpha}(-\frac{x}{2};q)}{2^{2k}\,[n-2k]_{q}!\, (q^{2},q^{2\alpha+2};q^{2})_{k}}&=\frac{(-1/2)^{n}}{[n]_{q}!}q^{\frac{n(n-1)}{4}} (-xq^{\frac{1-n}{2}};q)_{n} .
  \end{split}
\end{align}
}
\end{theorem}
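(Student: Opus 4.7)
The plan is to treat all three identities uniformly: for each $k\in\{1,2,3\}$ multiply the defining relation (\ref{q14}), (\ref{q13}) or (\ref{q197}) (with $x$ replaced by $-x/2$) through by $g^{(k)}_\alpha(it;q)$, expand that factor as a power series in $t$ using (\ref{g7907}), and equate coefficients of $t^n$. On the right-hand side a Cauchy product with $\sum_m B^{(k)}_{m,\alpha}(-x/2;q)\,t^m/[m]_q!$ reproduces exactly the sum displayed in the theorem, once the alternating sign of the Bessel series is absorbed into $(it)^{2n}=(-1)^n t^{2n}$; the work therefore reduces to evaluating the coefficient of $t^n$ in the cleared left-hand side.

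For $k=1$, the left-hand side is $e_q(-xt/2)\,e_q(-t/2)$. Expanding both factors by (\ref{h8}) and reindexing $j=n-k$ under the Cauchy product, the coefficient of $t^n/[n]_q!$ becomes $(-1/2)^n\sum_j\binom{n}{j}_q x^{j}$, which by the symmetry $\binom{n}{j}_q=\binom{n}{n-j}_q$ together with the definition (\ref{q25}) equals $(-1/2)^n H_n(x)$. For $k=2$ the same manipulation on $E_q(-xt/2)\,E_q(-t/2)$ drags along an extra factor $q^{k(k-1)/2+(n-k)(n-k-1)/2}$; after reindexing this rearranges to $q^{n(n-1)/2+j(j-n)}$, so $q^{n(n-1)/2}$ pulls out and leaves $\sum_j\binom{n}{j}_q q^{j^2-nj}x^j=G_n(x)$, exactly matching the right-hand side. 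On the $g^{(2)}_\alpha$-side, the $q^{n(n+\alpha)}$ weights of $J^{(2)}_\alpha$ translate into the $q^{2k(k+\alpha)}$ factor displayed in the theorem.

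For $k=3$, the series for $g^{(3)}_\alpha(it;q)$ built from $J^{(3)}_\alpha$ at argument $\frac{it}{2}(1-q)q^{-1/4}$ combines the $q^{n(n+1)}$ of the Bessel expansion with $q^{-n/2}$ arising from $(q^{-1/4})^{2n}$, producing precisely the weight $q^{k^2+k/2}$. On the left, $\exp_q(-xt/2)\,\exp_q(-t/2)$ introduces the quarter-integer exponents $q^{k(k-1)/4}$ and $q^{(n-k)(n-k-1)/4}$ under the Cauchy product; after the reindexing $j=n-k$ these collapse to $q^{n(n-1)/4+j(j-n)/2}$. The inner sum $\sum_j\binom{n}{j}_q q^{j(j-n)/2}x^j$ is then recognised through the $q$-binomial theorem $(z;q)_n=\sum_j\binom{n}{j}_q q^{j(j-1)/2}(-z)^j$ applied at $z=-xq^{(1-n)/2}$, yielding $(-xq^{(1-n)/2};q)_n$ and completing the identity.

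The main obstacle I anticipate is the exponent bookkeeping in case $k=3$, since both the $J^{(3)}$-expansion and the $\exp_q$-expansion carry quarter-integer $q$-weights; showing that the cross-terms repackage cleanly into $q^{n(n-1)/4}(-xq^{(1-n)/2};q)_n$ is where the calculation is least mechanical and relies critically on the $q$-binomial identification above. Once that step is carried out, the cases $k=1,2$ are essentially the same Cauchy-product argument, now supplemented by the elementary Al-Salam identities (\ref{q27}) and (\ref{q26}).
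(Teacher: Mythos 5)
Your proposal is correct and follows essentially the same route as the paper: clear the denominator $g^{(k)}_{\alpha}(it;q)$, expand it as an even power series in $t$, take the Cauchy product with the $B^{(k)}_{n,\alpha}$-series, and identify the coefficient of $t^{n}$ in the product of the two $q$-exponentials with $H_{n}$, $G_{n}$, or the $q$-shifted factorial via the $q$-binomial theorem (the paper substitutes $x\mapsto -x/2$ at the end rather than at the start, which is immaterial). Your exponent bookkeeping in all three cases checks out, and it in fact confirms that the denominators should read $(q^{2},q^{2\alpha+2};q^{2})_{k}$ as in the paper's own proof, the displayed $(q^{2},q^{2n+\alpha};q^{2})_{k}$ being a typo in the statement.
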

\begin{proof}{\small
We can write the generating function of the polynomials $B^{(1)}_{n,\alpha}(x;q)$ as
\begin{equation}\label{q35}\begin{split}
  e_{q}(xt)e_{q}(\frac{-t}{2}) &= g^{(1)}_{\alpha}(it;q)\sum_{n=0}^{\infty} B^{(1)}_{n,\alpha}(x;q)\frac{t^{n}}{[n]_{q}!}\\&=\left(\sum_{n=0}^{\infty} \frac{(1-q)^{2n}t^{2n}}{2^{2n}(q^{2},q^{2\alpha+2};q^{2})_{n}}\right)\,\left(\sum_{n=0}^{\infty} B^{(1)}_{n,\alpha}(x;q)\frac{t^{n}}{[n]_{q}!}\right).\end{split}
\end{equation}

On one hand, applying the Cauchy product formula in (\ref{q35}), we obtain
\begin{equation*}
  e_{q}(xt)e_{q}(\frac{-t}{2})=\sum_{n=0}^{\infty}t^{n}\sum_{k=0}^{[\frac{n}{2}]}\frac{(1-q)^{2k}B^{(1)}_{n-2k,\alpha}(x;q)}{2^{2k}[n-2k]_{q}!(q^{2},q^{2\alpha+2};q^{2})_{k}}
  .
\end{equation*}
On the other hand, using the series representation of $e_{q}(x)$  in (\ref{h8}) followed by the Cauchy product formula, and using (\ref{q25}) yields
\begin{equation}\label{q307}
  e_{q}(xt)e_{q}(\frac{-t}{2})
  =\sum_{n=0}^{\infty}\frac{t^{n}}{[n]_{q}!}\,(\frac{-1}{2})^{n} H_{n}(-2x).
\end{equation}
Hence
\begin{equation}\label{r4}
  \sum_{n=0}^{\infty}\frac{t^{n}}{[n]_{q}!}\,(\frac{-1}{2})^{n} H_{n}(-2x)=\sum_{n=0}^{\infty}t^{n}\sum_{k=0}^{[\frac{n}{2}]}\frac{(1-q)^{2k}B^{(1)}_{n-2k,\alpha}(x;q)}{2^{2k}[n-2k]_{q}!(q^{2},q^{2\alpha+2};q^{2})_{k}}
  ,
\end{equation}
equating the coefficients of $t^{n}$ in (\ref{r4}), we get
\begin{equation}\label{q6438}
  \frac{(\frac{-1}{2})^{n}\,H_{n}(-2x)}{[n]_{q}!}=\sum_{k=0}^{[\frac{n}{2}]}\frac{(1-q)^{2k}B^{(1)}_{n-2k,\alpha}(x;q)}{2^{2k}[n-2k]_{q}!(q^{2},q^{2\alpha+2};q^{2})_{k}}.
\end{equation}
Replacing $x \,by\,\, \frac{-x}{2}$  in (\ref{q6438}) gives
\begin{equation*}
  \frac{(\frac{-1}{2})^{n}\,H_{n}(x)}{[n]_{q}!}=\sum_{k=0}^{[\frac{n}{2}]}\frac{(1-q)^{2k}B^{(1)}_{n-2k,\alpha}(\frac{-x}{2};q)}{2^{2k}[n-2k]_{q}!(q^{2},q^{2\alpha+2};q^{2})_{k}},
\end{equation*}
which readily completes the proof for $B^{(1)}_{n,\alpha}(x;q)$. The proofs for $B^{(2)}_{n,\alpha}(x;q)$  and  $B^{(3)}_{n,\alpha}(x;q)$ are similar and are omitted.}
\end{proof}

{\small If we set $x=0$ in (\ref{q102}), we obtain the following recurrence relations for $\beta_{n,\alpha}(q)$ and $\beta^{(3)}_{n,\alpha}(q),$}
{\small\begin{align}\label{q1902}
\begin{split}
 \sum_{k = 0}^{[\frac{n}{2}]} \, \frac{ (1-q)^{2k} \,\beta_{n-2k,\alpha}(q)}{2^{2k}\,[n-2k]_{q}!\, (q^{2},q^{2\alpha+2};q^{2})_{k}}\,&=\frac{(\frac{-1}{2})^{n}}{[n]_{q}!},\quad n\in\mathbb{N},\\
 \quad\sum_{k = 0}^{[\frac{n}{2}]} \, \frac{(1-q)^{2k} q^{k^{2}+k/2}\,\beta^{(3)}_{n-2k,\alpha}(q)}{2^{2k}\,[n-2k]_{q}!\, (q^{2},q^{2\alpha+2};q^{2})_{k}}\,&=\frac{q^{\frac{n(n-1)}{4}}(\frac {-1}{2})^{n}}{[n]_{q}!},\quad n\in\mathbb{N}.
 \end{split}
\end{align}}
{\small As a consequence of the recursive relations in (\ref{q1902}), and the fact that
\begin{equation*}
  \beta_{0,\alpha}(q)=\beta^{(3)}_{0,\alpha}(q)=1,
\end{equation*}
we can prove that}
\\

{\small\begin{align*}
\beta_{1,\alpha}(q)&=-\frac{1}{2},\quad \beta_{2,\alpha}(q)=\frac{q(1-q^{2\alpha+1})}{4(1-q^{2\alpha+2})},\quad
\beta_{3,\alpha}(q)=\frac{-q^{3}(1-q^{2\alpha-1})}{8(1-q^{2\alpha+2})},\\
\\
\beta_{4,\alpha}(q)&\nonumber=\frac{1}{16}-\frac{(q+q^{3})(1-q^{3})(1-q^{2\alpha+1})}{16(1-q^{2\alpha+2})^{2}}-\frac{(1-q)(1-q^{3})}
{16(q^{2\alpha+2};q^{2})_{2}},\\
\\
\beta_{5,\alpha}(q)&\nonumber=\frac{(1+q^{2})(1-q^{5})(q^{3}-q^{2\alpha+2})}{32(1-q^{2\alpha+2})^{2}}+\frac{(1-q^{3})(1-q^{5})}{32(q^{2\alpha+2};q^{2})_{2}}
-\frac{1}{32},
\end{align*}}
and
{\small\begin{equation*}
\begin{split}
&\beta^{(3)}_{1,\alpha}(q)=\frac{-1}{2},\quad\quad \beta^{(3)}_{2,\alpha}(q)= \frac{q^{1/2}(1-q^{2\alpha+2})-q^{3/2}(1-q)}{4(1-q^{2\alpha+2})},\\&
\beta^{(3)}_{3,\alpha}(q)=\frac{-q^{3/2}(q^{3}-q^{2\alpha+2})}{8(1-q^{2\alpha+2})},
\end{split}
\end{equation*}
\begin{equation*}
\begin{split}
\beta^{(3)}_{4,\alpha}(q)=&\frac{q^{3}(q^{2\alpha+2};q^{2})_{2}(1-q^{2\alpha+2})
}{16(1-q^{2\alpha+2})^{2}(1-q^{2\alpha+4})}-\frac{[3]_{q}q^{5}(1-q)^{2}(1-q^{2\alpha+2})}{16(1-q^{2\alpha+2})^{2}(1-q^{2\alpha+4})}\\&+\frac{[4]_{q}[3]_{q}q^{3/2}
(1-q^{2\alpha+4})\left(q^{1/2}(1-q^{2\alpha+2})-q^{3/2}(1-q)\right)}{16(1-q^{2\alpha+2})^{2}(1-q^{2\alpha+4})},
\end{split}
\end{equation*}
\quad
\begin{equation*}
\begin{split}
\quad\beta^{(3)}_{5,\alpha}(q)=&\frac{[5]_{q}q^{3}(1-q)(1+q^{2})(q^{3}-q^{2\alpha+2})(1-q^{2\alpha+4})}{32(1-q^{2\alpha+2})^{2}(1-q^{2\alpha+4})} \\&+\frac {[5]_{q}q^{5}(1-q)(1-q^{3})(1-q^{2\alpha+2})}{32(1-q^{2\alpha+2})^{2}(1-q^{2\alpha+4})}-\frac {q^{5}(1-q^{2\alpha+2})^{2}(1-q^{2\alpha+4})}{32(1-q^{2\alpha+2})^{2}(1-q^{2\alpha+4})}.\end{split}
\end{equation*}}
\begin{theorem}{\small
For $  n\in\,\mathbb{N}_{0} $ and complex numbers $ a $ and $ x$,
\begin{align}\label{q691}
  B^{(1)}_{n,\alpha}(x;q)&=\sum_{k=0}^{n}\left[
                                                                \begin{array}{c}
                                                                  n \\
                                                                  k \\
                                                                \end{array}
                                                              \right]_{q}\,(a;q)_{k} \,x^{k} B^{(1)}_{n-k,\alpha}(ax;q),\\
  B^{(2)}_{n,\alpha}(x;q)&=\sum_{k=0}^{n}\left[
                                                                \begin{array}{c}
                                                                  n \\
                                                                  k \\
                                                                \end{array}
                                                              \right]_{q}\,(-a)^{k}(1/a;q)_{k} \,x^{k} B^{(2)}_{n-k,\alpha}(ax;q).
\end{align}}
\end{theorem}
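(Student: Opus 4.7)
The plan is to derive both identities from the generating function definitions \eqref{q14} and \eqref{q13}, by exploiting the fact that the ratio $e_q(xt)/e_q(axt)$ (respectively $E_q(xt)/E_q(axt)$) can be expanded by the $q$-binomial theorem in a clean closed form.

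The first step is to observe, using $e_q(x)=1/(x(1-q);q)_\infty$, that
\[
\frac{e_q(xt)}{e_q(axt)}=\frac{(axt(1-q);q)_\infty}{(xt(1-q);q)_\infty}.
\]
Setting $z=xt(1-q)$ and applying the $q$-binomial theorem $(az;q)_\infty/(z;q)_\infty=\sum_{k\ge 0}(a;q)_k z^k/(q;q)_k$, together with $(q;q)_k=(1-q)^k[k]_q!$, gives the expansion
\[
\frac{e_q(xt)}{e_q(axt)}=\sum_{k=0}^{\infty}(a;q)_k\,\frac{(xt)^k}{[k]_q!}.
\]
Analogously, from $E_q(x)=(-x(1-q);q)_\infty$ and writing the resulting quotient in the form $((1/a)z;q)_\infty/(z;q)_\infty$ with $z=-axt(1-q)$, the $q$-binomial theorem yields
\[
\frac{E_q(xt)}{E_q(axt)}=\sum_{k=0}^{\infty}(-a)^k (1/a;q)_k\,\frac{(xt)^k}{[k]_q!}.
\]

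Next I would factor the generating function for $B^{(1)}_{n,\alpha}(x;q)$ as
\[
\frac{e_q(xt)e_q(-t/2)}{g^{(1)}_\alpha(it;q)}
=\frac{e_q(axt)e_q(-t/2)}{g^{(1)}_\alpha(it;q)}\cdot\frac{e_q(xt)}{e_q(axt)},
\]
recognize the first factor as $\sum_n B^{(1)}_{n,\alpha}(ax;q)t^n/[n]_q!$ via \eqref{q14}, substitute the above expansion for the second factor, and apply the Cauchy product to combine the two series. Equating coefficients of $t^n/[n]_q!$ on both sides yields the first identity \eqref{q691}. The same template applied to \eqref{q13} — writing $E_q(xt)E_q(-t/2)/g^{(2)}_\alpha(it;q)$ as the generating function of $B^{(2)}_{n,\alpha}(ax;q)$ times $E_q(xt)/E_q(axt)$ — delivers the second identity.

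There is no real obstacle here; the only non-routine step is spotting that the ratios $e_q(xt)/e_q(axt)$ and $E_q(xt)/E_q(axt)$ collapse into $q$-binomial series via Euler's infinite-product representations. Once that observation is in place, the rest is the Cauchy product formula and coefficient comparison, exactly as in the proofs of \eqref{q19}--\eqref{q180}.
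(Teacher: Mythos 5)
Your proposal is correct and follows essentially the same route as the paper: the paper also factors the generating function \eqref{q14} as the generating function at $ax$ times the ratio $e_q(tx)/e_q(atx)$, expands that ratio by the $q$-binomial theorem, and finishes with the Cauchy product and coefficient comparison, omitting the analogous $B^{(2)}$ case which you spell out correctly via $E_q(xt)/E_q(axt)$.
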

\begin{proof}{\small
The proof of (\ref{q691}) follows from the generating function (\ref{q14}) since
\begin{equation*}
\begin{split}
  \frac{e_{q}(xt)e_{q}(\frac{-t}{2})}{g^{(1)}_{\alpha}(it;q)}=\frac{e_{q}(tax)e_{q}(\frac{-t}{2})}{g^{(1)}_{\alpha}(it;q)}\,\frac{e_{q}(tx)}{e_{q}(atx)},\quad |tx|<\frac{1}{1-q}.\\
\end{split}
\end{equation*}
From the $q$-binomial theorem (see \cite[ Eq.(1.3.2), P. 8]{Gasper}), we can prove that
\begin{equation*}
 \frac{e_{q}(tx)}{e_{q}(atx)}=\sum_{n=0}^{\infty}\frac{(a;q)_{n}}{(q;q)_{n}}((1-q)tx)^{n},\quad |tx|<\frac{1}{1-q}.
\end{equation*}
Therefore,
\begin{equation}\label{q78}
\begin{split}
  \sum_{n=0}^{\infty}B^{(1)}_{n,\alpha}(x;q)\frac{t^{n}}{[n]_{q}!}&=\left(\sum_{n=0}^{\infty}B^{(1)}_{n,\alpha}(ax;q)\frac{t^{n}}{[n]_{q}!}\right)\,\,
  \left(\sum_{n=0}^{\infty}\frac{(a;q)_{n}}{[n]_{q}!}(tx)^{n}\right)\\
                                                       &=\sum_{n=0}^{\infty}\frac{t^{n}}{[n]_{q}!}\sum_{k=0}^{n}\left[ \begin{array}{c}
                                                                  n \\
                                                                  k \\
                                                                \end{array}
                                                             \right]_{q}\, (a;q)_{k}x^{k} B^{(1)}_{n-k,\alpha}(ax;q),\\
\end{split}
\end{equation}
 where we used the Cauchy product formula. Equating the coefficients of $ t^{n} $ in (\ref{q78}),  we obtain (\ref{q691}). The proof for $B^{(2)}_{n,\alpha}(x;q)$ is similar and is omitted.}
\end{proof}

\begin{lemma}\label{iq}{\small
For $n\in\mathbb{N}_{0}$,  $Re \,\alpha>-1$, and $|\frac{(1-q)t}{2}|<1$,
\begin{equation}\label{h7}
 g^{(1)}_{\alpha}(it;q)E_{q}(\frac{t}{2})= g^{(2)}_{\alpha}(it;q)e_{q}(\frac{t}{2})
={}_{2}\phi_{1}\,(q^{\alpha+\frac{1}{2}},-q^{\alpha+\frac{1}{2}};q^{2\alpha+1};q,\frac{(1-q)t}{2}).
\end{equation}}
\end{lemma}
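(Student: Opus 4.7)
My plan has two stages, reflecting the two equalities in the statement.

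For the first equality $g^{(1)}_\alpha(it;q)\,E_q(t/2) = g^{(2)}_\alpha(it;q)\,e_q(t/2)$, I would invoke the identity (\ref{q1698}), already derived from Hahn's identity (\ref{q15217}) in the proof of Lemma \ref{gq}, which says $g^{(2)}_\alpha(it;q) = E_q(t/2)\,E_q(-t/2)\,g^{(1)}_\alpha(it;q)$. Multiplying through by $e_q(t/2)$ and using the elementary inverse relation $E_q(-t/2)\,e_q(t/2)=1$, immediate from the product formulas (\ref{h8}), produces the first equality at once.

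For the second equality, I would set $z:=(1-q)t/2$ and expand both sides as power series in $z$. The definition of $g^{(1)}_\alpha$ unwraps (after cancelling the $\Gamma_{q^2}(\alpha+1)$--$(1-q)^\alpha$ prefactors against the normalisation of $\mathcal{J}^{(1)}_\alpha$) to
$$g^{(1)}_\alpha(it;q)=\sum_{n\geq 0}\frac{z^{2n}}{(q^2,q^{2\alpha+2};q^2)_n},$$
while $E_q(t/2)=(-z;q)_\infty=\sum_{m\geq 0}q^{m(m-1)/2}z^m/(q;q)_m$ by (\ref{h8}). Using $(a;q)_k(-a;q)_k=(a^2;q^2)_k$ with $a=q^{\alpha+1/2}$, the right-hand ${}_2\phi_1$ reads
$$\sum_{k\geq 0}\frac{(q^{2\alpha+1};q^2)_k}{(q^{2\alpha+1};q)_k(q;q)_k}\,z^k.$$
Applying the Cauchy product to $g^{(1)}_\alpha(it;q)\,E_q(t/2)$ and matching the coefficient of $z^k$, the claim reduces to the scalar identity
$$\sum_{n=0}^{\lfloor k/2\rfloor}\frac{q^{\binom{k-2n}{2}}}{(q^2,q^{2\alpha+2};q^2)_n(q;q)_{k-2n}}\;=\;\frac{(q^{2\alpha+1};q^2)_k}{(q^{2\alpha+1};q)_k(q;q)_k},\qquad k\geq 0.$$

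To prove this coefficient identity I would clear $(q;q)_k$ and $(q^{2\alpha+1};q)_k$, rewrite $(q;q)_k/(q;q)_{k-2n}$ via the standard relation $(q^{-k};q)_{2n}=(-1)^{2n}q^{\binom{2n}{2}-2nk}(q;q)_k/(q;q)_{k-2n}$, and split the $q$-Pochhammer symbols through $(a;q)_{2n}=(a;q^2)_n(aq;q^2)_n$. After this re-indexing the sum over $n$ becomes a terminating balanced basic hypergeometric series with upper parameters $q^{-k}$, $q^{\alpha+1/2}$, $-q^{\alpha+1/2}$, which can be summed by a specialisation of the $q$-Chu--Vandermonde formula. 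As a fallback, the identity can be verified by induction on $k$: the base cases $k=0,1$ are direct, and the inductive step combines the three-term contiguous relation of ${}_2\phi_1$ with the $q$-recurrences satisfied by $g^{(1)}_\alpha(it;q)$ and $E_q(t/2)$.

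The main obstacle is precisely the combinatorial identity displayed above: the first equality is essentially formal, but the second demands recognising the product $g^{(1)}_\alpha(it;q)\,E_q(t/2)$ as the specific terminating ${}_2\phi_1$. Once the correct reindexing is chosen and the appropriate standard summation (Chu--Vandermonde or the equivalent) is invoked, both equalities are established on the prescribed disk $|(1-q)t/2|<1$.
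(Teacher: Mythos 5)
Your proposal follows essentially the same route as the paper: the first equality via (\ref{q1698}) together with $E_{q}(\frac{-t}{2})e_{q}(\frac{t}{2})=1$, and the second by a Cauchy-product coefficient comparison which, after using (\ref{f798}) and $(a;q)_{2n}=(a;q^{2})_{n}(aq;q^{2})_{n}$, reduces to the terminating summation ${}_{2}\phi_{1}(q^{-k},q^{1-k};q^{2\alpha+2};q^{2},q^{2})=\frac{(q^{2\alpha+1};q^{2})_{k}}{(q^{2\alpha+1};q)_{k}}\,q^{-k(k-1)/2}$, which the paper quotes from Gasper--Rahman and which is indeed obtainable as a parity-split specialisation of the $q$-Chu--Vandermonde formula, as you suggest. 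The only slip is cosmetic: after your reindexing the terminating series has upper parameters $q^{-k},q^{1-k}$ in base $q^{2}$ (the parameters $q^{\alpha+\frac{1}{2}},-q^{\alpha+\frac{1}{2}}$ belong to the target ${}_{2}\phi_{1}$, not to this inner sum), but this does not affect the validity of the argument.
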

\begin{proof}{\small
From Lemma \ref{gq}, we conclude that
\begin{equation*}
\begin{split}
g^{(2)}_{\alpha}(it;q) e_{q}(\frac{t}{2})=g^{(1)}_{\alpha}(it;q)E_{q}(\frac{t}{2}).\end{split}
\end{equation*}
From the series representations of $E_{q}(x)$ and $g^{(1)}_{\alpha}(it;q)$ in (\ref{h8}) and (\ref{q35}), respectively we obtain
\begin{equation*}\label{g4}
\begin{split}
 g^{(1)}_{\alpha}(it;q)E_{q}(\frac{t}{2})
 & =\left(\sum_{n=0}^{\infty}\frac{(1-q)^{2n}t^{2n}}{2^{2n}(q^{2},q^{2\alpha+2};q^{2})_{n}}\right)\left(
\sum_{n=0}^{\infty}\frac{q^{\frac{n(n-1)}{2}}(1-q)^{n}t^{n}}{2^{n}(q;q)_{n}}\right)\\&=\sum_{n=0}^{\infty}\frac{(1-q)^{n}q^{\frac{n(n-1)}{2}}t^{n}}{2^{n}}
\sum_{k=0}^{[\frac{n}{2}]}\frac{q^{2k^{2}-2nk +k}
}{(q;q)_{n-2k}(q^{2},q^{2\alpha+2};q^{2})_{k}}\\&=\sum_{n=0}^{\infty}\frac{q^{\frac{n(n-1)}{2}}(1-q)^{n}t^{n}}{2^{n}(q;q)_{n}}\sum_{k=0}^{[\frac{n}{2}]}
\frac{q^{2k}(q^{-n};q)_{2k}
}{(q^{2},q^{2\alpha+2};q^{2})_{k}},\end{split}
\end{equation*}
 where we used the identity, see \cite[Eq. (1.2.32), P. 6]{Gasper},
{\small\begin{equation}\label{f798}
\begin{split}
  (a;q)_{n-k}=\frac{(a;q)_{n}}{(a^{-1}q^{1-n};q)_{k}}(-qa^{-1})^{k}q^{\frac{k(k-1)}{2}
  -nk}\,\,\,(k=0,1,\ldots,n).
 \end{split}
\end{equation}}
Therefore, using the identity $(a;q)_{2n}=(a;q^{2})_{n}(aq;q^{2})_{n}$ yields
\begin{equation*}
\begin{split}
  g^{(1)}_{\alpha}(it;q)E_{q}(\frac{t}{2}) &=\sum_{n=0}^{\infty}\frac{q^{\frac{n(n-1)}{2}}(1-q)^{n}(\frac{t}{2})^{n}}{(q;q)_{n}}\sum_{k=0}^{[\frac{n}{2}]} \frac{q^{2k}(q^{-n};q^{2})_{k}(q^{-n+1};q^{2})_{k}}{(q^{2};q^{2\alpha+2};q^{2})_{k}}\\&=
\sum_{n=0}^{\infty}\dfrac{q^{\frac{n(n-1)}{2}}(1-q)^{n}(\frac{t}{2})^{n}}{(q;q)_{n}}\,\,_{2}\phi_{1}(q^{-n},q^{-n+1};q^{2\alpha+2};q^{2},q^{2}).
\end{split}
\end{equation*}
Since
\begin{equation*}
 _{2}\phi_{1}\,(q^{-n},q^{1-n};qb^{2};q^{2},q^{2})=\frac{(b^{2};q^{2})_{n}}{(b^{2};q)_{n}}q^{\frac{-n(n-1)}{2}}\,\,(n\in\mathbb{N}),
\end{equation*}
see \cite[P. 26 ]{Gasper}, then
\begin{equation}\label{t2303}
\begin{split}
 g^{(1)}_{\alpha}(it;q)E_{q}(\frac{t}{2})&=\sum_{n=0}^{\infty}\dfrac{(q^{\alpha+\frac{1}{2}};q)_{n}(-q^{\alpha+\frac{1}{2}};q)_{n}(\frac{(1-q)t}{2})^{n}}{
(q;q)_{n}(q^{2\alpha+1};q)_{n}}\\&={}_{2}\phi_{1}\,(q^{\alpha+\frac{1}{2}},-q^{\alpha+\frac{1}{2}};q^{2\alpha+1};q,\frac{(1-q)t}{2}).
  \end{split}
\end{equation}
Hence from Lemma \ref{gq} and (\ref{t2303}), we obtain (\ref{h7}) and completes the proof.

}
\end{proof}

\begin{theorem}{\small
Let $ \alpha $ be a complex number such that $ Re\,\alpha >-1$. Then
\begin{align}\label{q48}
 \sum_{m=0}^{n}\left[
                                                                \begin{array}{c}
                                                                  n \\
                                                                  m \\
                                                                \end{array}
                                                              \right]_{q}\dfrac{(q^{2\alpha+1};q^{2})_{m}B^{(1)}_{n-m,\alpha}(x;q)}{2^{m}
(q^{2\alpha+1};q)_{m}}\nonumber&= x^{n},\\
 \sum_{m=0}^{n}\left[
                                                                \begin{array}{c}
                                                                  n \\
                                                                  m \\
                                                                \end{array}
                                                              \right]_{q}\dfrac{(q^{2\alpha+1};q^{2})_{m}B^{(2)}_{n-m,\alpha}(x;q)}{2^{m}
(q^{2\alpha+1};q)_{m}}\nonumber&=q^{\frac{n(n-1)}{2}}x^{n},\\
  \sum_{m=0}^{n}(-\frac{1}{2})^{m}\left(\sum_{k = 0}^{[\frac{m}{2}]}\,  \frac{
  \,q^{k^{2}+k/2}\,(1-q)^{2k}\,c_{m-2k}}{\,(q^{2},q^{2\alpha+2};q^{2})_{k}}\right)\frac{B^{(3)}_{n-m,\alpha}(x;q)}{[n-m]_{q}!}\nonumber&= \frac{q^{\frac{n(n-1)}{4}}x^{n}}{[n]_{q}!},
\end{align}}
where $(c_{k})_{k}$ are the coefficients defined in (\ref{q6403}).
\end{theorem}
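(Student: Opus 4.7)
The three identities are all \emph{inversion formulas} for the generating functions, obtained by rearranging so that $e_q(xt)$, $E_q(xt)$, or $\exp_q(xt)$ appears on the right-hand side, and then reading off coefficients of $t^n$.

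\textbf{Cases $(k=1)$ and $(k=2)$.} I rewrite the generating function (\ref{q14}) as
\begin{equation*}
e_q(xt) \;=\; \frac{g^{(1)}_{\alpha}(it;q)}{e_q(-t/2)}\,\sum_{n=0}^{\infty} B^{(1)}_{n,\alpha}(x;q)\,\frac{t^n}{[n]_q!}.
\end{equation*}
The key identity $e_q(-t/2)\,E_q(t/2)=1$ lets me write $g^{(1)}_{\alpha}(it;q)/e_q(-t/2) = g^{(1)}_{\alpha}(it;q)\,E_q(t/2)$, and then Lemma~\ref{iq} gives this factor as a ${}_2\phi_1$ series. Expanding that series with the well-known factorization $(q^{\alpha+1/2};q)_m(-q^{\alpha+1/2};q)_m=(q^{2\alpha+1};q^2)_m$ produces
\begin{equation*}
g^{(1)}_{\alpha}(it;q)\,E_q(t/2) \;=\; \sum_{m=0}^{\infty}\frac{(q^{2\alpha+1};q^2)_m}{(q^{2\alpha+1};q)_m}\,\frac{t^m}{2^m[m]_q!}.
\end{equation*}
Combining this with the generating series for $B^{(1)}_{n,\alpha}$ via the Cauchy product and equating the coefficient of $t^n$ with that of $e_q(xt)=\sum_n x^n t^n/[n]_q!$ yields the first identity. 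The second identity is obtained identically: write $E_q(xt)= \bigl(g^{(2)}_{\alpha}(it;q)/E_q(-t/2)\bigr)\sum_n B^{(2)}_{n,\alpha}(x;q)t^n/[n]_q!$, use $E_q(-t/2)e_q(t/2)=1$ and the same Lemma~\ref{iq} (which asserts the two products are equal), and match against $E_q(xt)=\sum_n q^{n(n-1)/2}x^n t^n/[n]_q!$.

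\textbf{Case $(k=3)$.} Here I need to expand $g^{(3)}_{\alpha}(it;q)/\exp_q(-t/2)$ directly. From the definition of $\mathcal{J}^{(3)}_\alpha$ together with Definition~\ref{q-Bernoulli}, a short substitution gives
\begin{equation*}
g^{(3)}_{\alpha}(it;q) \;=\; \sum_{k=0}^{\infty}\frac{q^{k^2+k/2}(1-q)^{2k}}{2^{2k}(q^2,q^{2\alpha+2};q^2)_k}\,t^{2k}.
\end{equation*}
Lemma~\ref{ii} yields $1/\exp_q(-t/2)=\sum_m c_m(-t/2)^m$. Multiplying these two series by the Cauchy product, the coefficient of $t^m$ collects into exactly the bracketed expression in the statement (the factor $4^k/2^{2k}=1$ absorbs the adjustment between $(-1/2)^{m-2k}$ and $(-1/2)^m$). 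Multiplying this expansion by the generating series for $B^{(3)}_{n,\alpha}$ gives $\exp_q(xt)$, and equating coefficients of $t^n$ against $\exp_q(xt)=\sum_n q^{n(n-1)/4}x^n t^n/[n]_q!$ produces the third identity.

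\textbf{Main obstacle.} The first two cases are essentially a clean Cauchy-product argument once Lemma~\ref{iq} is invoked. The main bookkeeping difficulty is in case $(k=3)$: tracking the inner sum that arises when a series in $t^{2k}$ is multiplied by a series in $t^m$, and correctly managing the powers of $-1/2$, the factors $4^k$, and the $q$-powers $q^{k^2+k/2}$ so that the resulting coefficient matches the stated expression. Once that Cauchy-product coefficient is verified, the rest is a direct comparison of coefficients of $t^n$ on both sides.
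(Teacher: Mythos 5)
Your proposal is correct and follows essentially the same route as the paper: rewrite each generating function with $e_q(xt)$, $E_q(xt)$, or $\exp_q(xt)$ isolated, expand the remaining factor (via Lemma~\ref{iq} for $k=1,2$, giving exactly the paper's coefficients $d_m$, and via the series for $g^{(3)}_{\alpha}$ together with Lemma~\ref{ii} for $k=3$), and equate coefficients of $t^n$ after a Cauchy product. The only difference is that you spell out the $k=3$ bookkeeping, which the paper leaves as ``similar,'' and your computation of that coefficient is correct.
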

\begin{proof}{\small
We can write  Equation (\ref{q14}) in the form
\begin{equation}\label{q41}
\begin{split}
  e_{q}(xt)&=E_{q}(\frac{t}{2})g^{(1)}_{\alpha}(it;q)\sum_{n=0}^{\infty} B^{(1)}_{n,\alpha}(x;q)\frac{t^{n}}{[n]_{q}!}\\& = \left(\sum_{n=0}^{\infty} d_{n} t^{n}\right)\,\left( \sum_{n=0}^{\infty} B^{(1)}_{n,\alpha}(x;q)\frac{t^{n}}{[n]_{q}!}\right).\end{split}
\end{equation}
From Lemma \ref{iq}, we obtain
\begin{equation}\label{t303}
\begin{split}
 g^{(1)}_{\alpha}(it;q)E_{q}(\frac{t}{2})=\sum_{n=0}^{\infty} d_{n}t^{n},
  \end{split}
\end{equation}
where
\begin{equation}\label{q47}
 d_{n}=\dfrac{(1-q)^{n}(q^{2\alpha+1};q^{2})_{n}}{2^{n}(q;q)_{n}
(q^{2\alpha+1};q)_{n}} .
\end{equation}

Now, applying the Cauchy product formula in (\ref{q41}) gives
\begin{equation}\label{h44}
\begin{split}
   e_{q}(xt)=\sum_{n=0}^{\infty}t^{n}\,\sum_{m=0}^{n}\frac{d_{m} B^{(1)}_{n-m,\alpha}(x;q) }{ [n-m]_{q}!}=
    \sum_{n=0}^{\infty}\frac{(xt)^{n}}{[n]_{q}!}.
\end{split}
\end{equation}
Equating the coefficients of the $n$th power of $t$ in (\ref{h44}) gives
\begin{equation}\label{q4500}
  \sum_{m=0}^{n}\frac{d_{m} B^{(1)}_{n-m,\alpha}(x;q) }{ [n-m]_{q}!}=\frac{x^{n}}{[n]_{q}!}.
\end{equation}
Substituting from (\ref{q47}) into (\ref{q4500}), we get the result for $B^{(1)}_{n,\alpha}(x;q)$. Similarly, we can prove the result for
$ B^{(k)}_{n,\alpha}(x;q)\,(k=2,3) $. }
\end{proof}
\begin{theorem}{\small
Let $ n$ be a positive integer and $x$ be a complex number. If $Re\,\alpha>-1 $, then
\begin{equation}\label{q3058}
\begin{split}
   B^{(1)}_{n,\alpha}(x;q)&-(-1)^{n} B^{(1)}_{n,\alpha}(-x;q)\\&=\sum_{k=0}^{n}\left[
   \begin{array}{c}
                                                                  n \\
                                                                  k \\
                                                                \end{array}
                                                              \right]_{q}\left((\frac{-1}{2})^{k}H_{k}(-2x)-(\frac{1}{2})^{k}H_{k}(2x)\right) B^{(2)}_{n-k,\alpha}(\frac{1}{2};q),
                                                              \end{split}
\end{equation}
\begin{equation}\label{q398}
\begin{split}
   B^{(2)}_{n,\alpha}(x;q)&-(-1)^{n} B^{(2)}_{n,\alpha}(-x;q)\\&=\sum_{k=0}^{n}\left[
   \begin{array}{c}
                                                                  n \\
                                                                  k \\
                                                                \end{array}
                                                              \right]_{q}q^{\frac{k(k-1)}{2}}\left((\frac{-1}{2})^{k}G_{k}(-2x)-(\frac{1}{2})^{k}G_{k}(2x)\right) B^{(1)}_{n-k,\alpha}(\frac{1}{2};q).
                                                              \end{split}
\end{equation}}
\end{theorem}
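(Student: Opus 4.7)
The plan is to establish both identities via generating functions, exploiting two observations: (i) each $g^{(k)}_{\alpha}(it;q)$ for $k=1,2$ is a power series in $t^{2}$, hence invariant under $t\mapsto -t$; and (ii) by (\ref{q9322}) and (\ref{q76}), $1/g^{(1)}_{\alpha}(it;q)$ and $1/g^{(2)}_{\alpha}(it;q)$ are respectively the generating functions of $B^{(2)}_{n,\alpha}(\frac{1}{2};q)$ and $B^{(1)}_{n,\alpha}(\frac{1}{2};q)$.

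For (\ref{q3058}), I first apply $x\mapsto -x$ and then $t\mapsto -t$ in (\ref{q14}); the parity of $g^{(1)}_{\alpha}(it;q)$ gives
\[
\sum_{n=0}^{\infty}(-1)^{n}B^{(1)}_{n,\alpha}(-x;q)\frac{t^{n}}{[n]_{q}!}=\frac{e_{q}(xt)\,e_{q}(t/2)}{g^{(1)}_{\alpha}(it;q)}.
\]
Subtracting this from (\ref{q14}) and using (\ref{q9322}) to replace $1/g^{(1)}_{\alpha}(it;q)$, the generating function of the LHS of (\ref{q3058}) becomes
\[
\bigl[e_{q}(xt)e_{q}(-t/2)-e_{q}(xt)e_{q}(t/2)\bigr]\sum_{n=0}^{\infty}B^{(2)}_{n,\alpha}(\tfrac{1}{2};q)\frac{t^{n}}{[n]_{q}!}.
\]
The bracketed factor is expanded by (\ref{q307}): the first term equals $\sum_{n}\frac{t^{n}}{[n]_{q}!}(-\frac{1}{2})^{n}H_{n}(-2x)$, while the substitutions $t\mapsto -t$ followed by $x\mapsto -x$ in (\ref{q307}) give $e_{q}(xt)e_{q}(t/2)=\sum_{n}\frac{t^{n}}{[n]_{q}!}(\frac{1}{2})^{n}H_{n}(2x)$. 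One Cauchy product and coefficient extraction then yield (\ref{q3058}).

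For (\ref{q398}) the argument is structurally identical with $e_{q}\leftrightarrow E_{q}$: the parity argument produces $\frac{E_{q}(xt)[E_{q}(-t/2)-E_{q}(t/2)]}{g^{(2)}_{\alpha}(it;q)}$ as the generating function of the LHS, and (\ref{q76}) replaces $1/g^{(2)}_{\alpha}(it;q)$ by $\sum_{n}B^{(1)}_{n,\alpha}(\frac{1}{2};q)\frac{t^{n}}{[n]_{q}!}$. The required $E_{q}$-analog of (\ref{q307}) is
\[
E_{q}(xt)E_{q}(\mp t/2)=\sum_{n=0}^{\infty}\frac{t^{n}}{[n]_{q}!}\,q^{n(n-1)/2}\bigl(\mp\tfrac{1}{2}\bigr)^{n}G_{n}(\mp 2x),
\]
obtained by multiplying the series (\ref{h8}) for $E_{q}$ termwise and collapsing the $q$-exponent via the identity $\frac{k(k-1)}{2}+\frac{(n-k)(n-k-1)}{2}=\frac{n(n-1)}{2}+k^{2}-nk$ so the inner sum matches the definition (\ref{q25}) of $G_{n}$. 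A final Cauchy product followed by coefficient extraction delivers (\ref{q398}).

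The only non-routine step is verifying the $E_{q}$-analog of (\ref{q307}); once that exponent identity is confirmed, the remainder is direct generating-function bookkeeping. The parity of $g^{(1)}_{\alpha}$ and $g^{(2)}_{\alpha}$ in $t$ is the structural ingredient that turns the subtraction into a single clean convolution, which is what allows the RHS to be expressed purely in terms of the ``companion'' polynomials $B^{(2)}_{n,\alpha}(\frac{1}{2};q)$ in the first identity and $B^{(1)}_{n,\alpha}(\frac{1}{2};q)$ in the second.
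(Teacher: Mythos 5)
Your proposal is correct and follows essentially the same route as the paper: exploit the evenness of $g^{(k)}_{\alpha}(it;q)$ in $t$, subtract the $(x,t)\mapsto(-x,-t)$ version of the generating function, replace $1/g^{(1)}_{\alpha}(it;q)$ and $1/g^{(2)}_{\alpha}(it;q)$ via (\ref{q9322}) and (\ref{q76}), expand the bracketed factor through the Al-Salam identities, and extract coefficients after a Cauchy product. Your explicit verification of the $E_{q}$-analog $E_{q}(xt)E_{q}(\mp t/2)=\sum_{n}\frac{t^{n}}{[n]_{q}!}q^{n(n-1)/2}(\mp\tfrac{1}{2})^{n}G_{n}(\mp 2x)$ correctly supplies the detail the paper leaves out when it declares the proof of (\ref{q398}) ``similar.''
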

\begin{proof}{\small
We give only the proof of (\ref{q3058}) since the proof of (\ref{q398}) is similar. From (\ref{q14}),
\begin{equation}\label{q400}
  \frac{e_{q}(xt)e_{q}(\frac{-t}{2})}{g^{(1)}_{\alpha}(it;q)}-\frac{e_{q}(xt)e_{q}(\frac{t}{2})}{g^{(1)}_{\alpha}(-it;q)}
  =\sum_{n=0}^{\infty}B^{(1)}_{n,\alpha}(x;q)\frac{t^{n}}{[n]_{q}!}-\sum_{n=0}^{\infty}B^{(1)}_{n,\alpha}(-x;q)\frac{(-t)^{n}}{[n]_{q}!}.
\end{equation}
Since
\begin{equation*}
g^{(1)}_{\alpha}(-it;q)=g^{(1)}_{\alpha}(it;q),
\end{equation*}
then Equation (\ref{q400}) can be written as
\begin{equation}\label{q9501}
 \frac{e_{q}(xt)e_{q}(\frac{-t}{2})
 -e_{q}(xt)e_{q}(\frac{t}{2})}{g^{(1)}_{\alpha}(it;q)}
  =\sum_{n=0}^{\infty}\left [B^{(1)}_{n,\alpha}(x;q)-(-1)^{n}B^{(1)}_{n,\alpha}(-x;q)\right]\frac{t^{n}}{[n]_{q}!}.
\end{equation}
Replacing $ x, t$  by  $-x, -t$, respectively in (\ref{q307}) gives
\begin{equation}\label{q1052}
  e_{q}(xt)e_{q}(\frac{t}{2})= \sum_{n=0}^{\infty}\frac{t^{n}}{[n]_{q}!}(\frac{1}{2})^{n}H_{n}(2x).
\end{equation}
From (\ref{q307}) and (\ref{q1052}), the left hand side of  (\ref{q9501}) can be written as
\begin{equation*}\label{q403}
 \frac{e_{q}(xt)e_{q}(\frac{-t}{2}) -e_{q}(xt)e_{q}(\frac{t}{2})}{g^{(1)}_{\alpha}(it;q)}=\frac{1} {g^{(1)}_{\alpha}(it;q)} \sum_{n=0}^{\infty}\frac{t^{n}}{[n]_{q}!}\left((\frac{-1}{2})^{n}H_{n}(-2x)-(\frac{1}{2})^{n}H_{n}(2x)\right).
\end{equation*}
Therefore, by (\ref{q9322}) and the Cauchy product formula, we get
{\small\begin{equation}\label{q408}
\begin{split}
 &\frac{e_{q}(xt)e_{q}(\frac{-t}{2}) -e_{q}(xt)e_{q}(\frac{t}{2})}{g^{(1)}_{\alpha}(it;q)}=\\&\sum_{n=0}^{\infty}\frac{t^{n}}{[n]_{q}!}\sum_{k=0}^{n}
 \left[\begin{array}{c}
                                                                  n \\
                                                                  k \\
                                                                \end{array}
                                                              \right]_{q}\left((\frac{-1}{2})^{k}H_{k}(-2x)-(\frac{1}{2})^{k}H_{k}(2x)\right)
                                                              B^{(2)}_{n-k,\alpha}(\frac{1}{2};q).
 \end{split}
\end{equation}}
Since the left hand side of (\ref{q9501}) and (\ref{q408}) are equal, then equating the coefficients of $t^{n}$ on the right hand sides of (\ref{q9501}) and (\ref{q408}) yields (\ref{q3058}) and completes the proof.}
\end{proof}
\begin{proposition}\label{kk}
 If ${\small\alpha_{0}>-1}$ satisfies the condition
\begin{equation}\label{q8754}
  q^{2(\alpha_{0}+1)}(1-q)^{2}<(1-q^{2})(1-q^{2\alpha_{0}+2}),
\end{equation}
then  $(t/2)^{-\alpha}J_{\alpha}^{(2)}(t(1-q);q^{2})$ has no zeros in $|t|\leq1$ for all $\alpha\geq\alpha_{0}.$

\end{proposition}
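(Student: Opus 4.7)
The plan is to write $(t/2)^{-\alpha}J_\alpha^{(2)}(t(1-q);q^2)$ as a constant (independent of $t$) times the series
\[
f_\alpha(t) := \sum_{n=0}^\infty (-1)^n a_n\, t^{2n}, \qquad a_n := \frac{q^{2n(\alpha+n)}\bigl((1-q)/2\bigr)^{2n}}{(q^2;q^2)_n(q^{2\alpha+2};q^2)_n},
\]
and then to show that $\sum_{n=1}^\infty a_n<1$. Since the prefactor $\tfrac{(q^{2\alpha+2};q^2)_\infty (1-q)^\alpha}{(q^2;q^2)_\infty}$ is nonzero for $\alpha>-1$, the triangle inequality will give $|f_\alpha(t)|\ge 1-\sum_{n\ge1}a_n>0$ on the closed unit disc $|t|\le 1$, which is exactly the claim.

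The heart of the argument is a geometric bound $a_n\le a_1^n$ for every $n\ge 1$. A direct computation yields
\[
\frac{a_{n+1}/a_n}{a_1}=\frac{q^{4n}\,(1-q^2)(1-q^{2\alpha+2})}{(1-q^{2n+2})(1-q^{2\alpha+2n+2})},
\]
and this quantity is at most $1$ for every $n\ge 1$: one has $q^{4n}\le 1$, while $1-q^{2n+2}\ge 1-q^2$ and $1-q^{2\alpha+2n+2}\ge 1-q^{2\alpha+2}$ (both because $0<q<1$ and $\alpha>-1$). An easy induction starting from $a_0=1$ then gives $a_n\le a_1^n$, so that $\sum_{n\ge 1}a_n\le a_1/(1-a_1)$, which is strictly less than $1$ as soon as $a_1<1/2$.

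It remains to convert the hypothesis (\ref{q8754}) into the desired bound on $a_1$ and to extend it to $\alpha\ge\alpha_0$. The condition (\ref{q8754}) is exactly the inequality $a_1(\alpha_0)<1/4$, strictly better than what the above argument requires. To propagate this to all $\alpha\ge\alpha_0$, I check that $a_1(\alpha)$ is decreasing in $\alpha$: writing $u:=q^{2\alpha+2}\in(0,1)$, one has $a_1\propto u/(1-u)$, which is increasing in $u$, while $u$ itself is decreasing in $\alpha$ since $0<q<1$. Hence $a_1(\alpha)\le a_1(\alpha_0)<1/4$ for every $\alpha\ge\alpha_0$, so $\sum_{n\ge 1}a_n\le 1/3$ uniformly, and the proof is complete. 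The only place that requires genuine care is the first step---the explicit extraction of the nonvanishing prefactor $\tfrac{(q^{2\alpha+2};q^2)_\infty(1-q)^\alpha}{(q^2;q^2)_\infty}$ from the series defining $J_\alpha^{(2)}$---but this is purely bookkeeping; the substantive obstacle, the ratio estimate, is handled by the elementary monotonicities just noted.
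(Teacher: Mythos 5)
Your proof is correct, and it differs from the paper's in one substantive respect, namely how nonvanishing on the \emph{complex} disk is obtained. Both arguments begin with the same normalization $F(t)=\sum_{k\ge 0}(-1)^k a_k t^{2k}$ (the nonzero prefactor $(1-q)^{\alpha}(q^{2\alpha+2};q^2)_\infty/(q^2;q^2)_\infty$ being harmless) and the same ratio computation for $a_{k+1}/a_k$. The paper uses the hypothesis only to conclude $a_{k+1}/a_k<1$, pairs consecutive terms of the alternating series to get $F(t)\ge a_0-a_1>0$ for \emph{real} $t\in[-1,1]$, and then passes to $|t|\le 1$ by invoking the fact that $F$ has only real zeros --- a known property of the second Jackson $q$-Bessel function that the paper uses without proof. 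You instead exploit the hypothesis quantitatively: (\ref{q8754}) is precisely $a_1(\alpha_0)<\tfrac14$, your refinement $a_{n+1}\le a_1 a_n$ (from $q^{4n}\le 1$, $1-q^{2n+2}\ge 1-q^{2}$, $1-q^{2\alpha+2n+2}\ge 1-q^{2\alpha+2}$) gives $a_n\le a_1^{\,n}$, hence $\sum_{n\ge1}a_n\le a_1/(1-a_1)<1$, and the triangle inequality yields $|F(t)|>0$ on all of $|t|\le 1$ with no information about where the zeros lie; your monotonicity of $a_1$ in $\alpha$ via $u=q^{2\alpha+2}$ plays the role of the paper's chain of inequalities propagating (\ref{q8754}) from $\alpha_0$ to all $\alpha\ge\alpha_0$. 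The trade-off is mild: the paper's pairing argument needs only $a_{k+1}<a_k$, so it would survive a hypothesis weakened by the factor $4$, but it rests on the external reality-of-zeros fact; your version is self-contained on the complex disk, which is exactly what the proposition asserts, and the constant in (\ref{q8754}) is comfortably stronger than the $a_1<\tfrac12$ your geometric bound requires.
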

\begin{proof}
{\small
Set
\begin{equation*}
 F(t):= \frac{(q;q)_{\infty}}{(q^{\alpha+1};q)_{\infty}}(t/2)^{-\alpha}J_{\alpha}^{(2)}(t(1-q);q^{2})
=\sum_{k=0}^{\infty}\frac{(-1)^{k}q^{2k(k+\alpha)}(1-q)^{2k}}{2^{2k}(q^{2},q^{2\alpha+2};q^{2})_{k}}t^{2k},
\end{equation*}
and
\begin{equation*}
  a_{k}:=\frac{q^{2k(k+\alpha)}(1-q)^{2k}}{2^{2k}(q^{2},q^{2\alpha+2};q^{2})_{k}}.
\end{equation*}
Then, under hypothesis (\ref{q8754}) and since $0 < q < 1$,
{\small\begin{equation*}
  q^{2(\alpha+1)}(1-q^{2})\leq q^{2(\alpha_{0}+1)}(1-q^{2})<(1-q^{2})(1-q^{2\alpha_{0}+2})\leq(1-q^{2})(1-q^{2\alpha+2}),
\end{equation*}}
holds whenever $\alpha\geq \alpha_{0}$. Hence
\begin{equation*}
   \frac{a_{k+1}}{a_{k}}=\frac{q^{4k+2(\alpha+1)}(1-q)^{2}}{4(1-q^{2k+2})(1-q^{2k+2\alpha+2})}
\leq\frac{q^{2(\alpha+1)}(1-q)^{2}}{4(1-q^{2})(1-q^{2\alpha+2})}<1,
\end{equation*}
for $t\in\mathbb{R},\, |t|\leq1$
\begin{equation*}
\begin{split}
  F(t)=\sum_{k=0}^{\infty}t^{2k}(a_{2k}-a_{2k+1}t^{2})\geq (a_{0}-a_{1}t^{2})\geq(a_{0}-a_{1})>0.
\end{split}
\end{equation*}
This proves that $F(t)$ has no zeros on $[-1,1]$, since $F(t)$ has only real zeros, then $F(t)$ has no zeros in the unit disk. i.e $|F(t)|>0,\,\,for\,\,|t|\leq1.$
}
\end{proof}
{\small\begin{corollary}\label{nn}
There exists $\alpha_{0}>-1$ such that $J_{\alpha}^{(2)}(t(1-q);q^{2})$ has no zeros in the unit disk for all $\alpha\geq\alpha_{0}$.
\end{corollary}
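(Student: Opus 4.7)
The plan is to deduce this corollary directly from Proposition~\ref{kk} by verifying that its hypothesis (\ref{q8754}) admits at least one solution $\alpha_{0}>-1$. Once such an $\alpha_{0}$ is exhibited, the proposition immediately delivers the nonvanishing of $(t/2)^{-\alpha}J_{\alpha}^{(2)}(t(1-q);q^{2})$ on $|t|\le 1$ for every $\alpha\ge\alpha_{0}$, and multiplying through by the trivial factor $(t/2)^{\alpha}$ (which is nonzero on the punctured unit disk) transfers the conclusion to $J_{\alpha}^{(2)}(t(1-q);q^{2})$ itself.

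The sole analytic step is therefore to simplify
\[
q^{2(\alpha_{0}+1)}(1-q)^{2}<(1-q^{2})(1-q^{2\alpha_{0}+2}).
\]
I would divide both sides by $1-q>0$ and set $y:=q^{2(\alpha_{0}+1)}\in(0,1)$, using $1-q^{2}=(1-q)(1+q)$. The inequality then collapses to $y(1-q)<(1+q)(1-y)$, which rearranges to the single condition $2y<1+q$, equivalently
\[
q^{2(\alpha_{0}+1)}<\frac{1+q}{2}.
\]
Since $(1+q)/2\in(1/2,1)$ is fixed while $q^{2(\alpha_{0}+1)}\to 0$ as $\alpha_{0}\to\infty$, this holds for every sufficiently large $\alpha_{0}>-1$; one can in fact write down an explicit closed-form threshold
\[
\alpha_{0}>\frac{\ln 2-\ln(1+q)}{-2\ln q}-1,
\]
and check that it is compatible with $\alpha_{0}>-1$ for every $q\in(0,1)$.

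There is no real obstacle: all the analytic content was absorbed by Proposition~\ref{kk}, and the remaining task is purely algebraic. The only step requiring any care is the one-line reduction of (\ref{q8754}) to the clean inequality $q^{2(\alpha_{0}+1)}<(1+q)/2$, after which existence of a valid $\alpha_{0}$ is visibly a matter of taking $\alpha_{0}$ large.
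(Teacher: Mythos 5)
Your proposal is correct and follows essentially the same route as the paper: both deduce the corollary from Proposition~\ref{kk} by noting that the hypothesis (\ref{q8754}) holds once $\alpha_{0}$ is large enough, the paper via the limit $q^{2\alpha+2}\to 0$ as $\alpha\to\infty$, you via the (correct) algebraic reduction of (\ref{q8754}) to $q^{2(\alpha_{0}+1)}<\frac{1+q}{2}$, which merely makes the admissible threshold explicit.
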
}
\begin{proof}{\small
Since for a fixed $q\in (0,1)$,
\begin{equation*}
  \displaystyle\lim_{\alpha\rightarrow\infty}q^{2\alpha+2}=0,\,\,\displaystyle\lim_{\alpha\rightarrow\infty}(1-q^{2})(1-q^{2\alpha+2})=(1-q^{2}),
\end{equation*}
then there exists $\alpha_{0}>-1 $ such that the condition (\ref{q8754}) holds for all $\alpha\geq\alpha_{0}$. Consequently from Proposition \ref{kk},  $J_{\alpha}^{(2)}(t(1-q);q^{2})$ has no zeros in the unit disk for all $\alpha\geq\alpha_{0}$.
}
\end{proof}

\begin{theorem}{\small
For $n\in\mathbb{N}$,
\begin{equation}\label{q10046}
 \quad\quad\quad\quad\lim_{\alpha\rightarrow\infty}\,B^{(2)}_{n,\alpha}(x;q)=(-\frac{1}{2})^{n}q^{\frac{n(n-1)}{2}}G_{n}(-2x),
\end{equation}
\begin{equation}\label{q10045}
\lim_{\alpha\rightarrow\infty}\,B^{(1)}_{n,\alpha}(x;q)=x^{n}(\frac{1}{2x};q)_{n}.
\end{equation}}
\end{theorem}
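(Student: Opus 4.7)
My strategy is to work directly with the generating functions (\ref{q13}) and (\ref{q14}), pass to the limit $\alpha\to\infty$ coefficient by coefficient in $t$, and then identify the resulting power series in closed form. For each fixed $n$ the quantity $B^{(k)}_{n,\alpha}(x;q)$ is a polynomial in the first $n+1$ Taylor coefficients of $g^{(k)}_\alpha(it;q)$ (via the standard recursion that inverts a power series with constant term $1$, followed by a Cauchy product with the entire function $E_q(xt)E_q(-t/2)$ or $e_q(xt)e_q(-t/2)$). Hence it suffices to compute $\lim_{\alpha\to\infty}g^{(k)}_\alpha(it;q)$ termwise in $t$, and the interchange of limit with coefficient extraction is automatic.

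For (\ref{q10046}), a direct expansion of $g^{(2)}_\alpha(it;q)$ from the definition and the series for $J^{(2)}_\alpha$ gives
\[
g^{(2)}_\alpha(it;q)=\sum_{k=0}^{\infty}\frac{q^{2k(k+\alpha)}(1-q)^{2k}(t/2)^{2k}}{(q^2;q^2)_k\,(q^{2\alpha+2};q^2)_k},
\]
and every $k\ge 1$ term carries the factor $q^{2k\alpha}\to 0$. Thus $g^{(2)}_\alpha(it;q)\to 1$ termwise and the generating function in (\ref{q13}) tends to $E_q(xt)E_q(-t/2)$. Extracting the coefficient of $t^n/[n]_q!$ via the Cauchy product of the two defining series in (\ref{h8}) produces
\[
\sum_{j=0}^{n}\left[\begin{array}{c}n\\j\end{array}\right]_q q^{j(j-1)/2+(n-j)(n-j-1)/2}(-1/2)^{n-j}x^j,
\]
which, using the identity $j(j-1)/2+(n-j)(n-j-1)/2=n(n-1)/2+j^2-nj$ together with the definition of $G_n$ in (\ref{q25}), equals $(-1/2)^n q^{n(n-1)/2}G_n(-2x)$.

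For (\ref{q10045}), the analogous expansion appearing in (\ref{q35}) gives
\[
g^{(1)}_\alpha(it;q)=\sum_{k=0}^{\infty}\frac{(1-q)^{2k}(t/2)^{2k}}{(q^2;q^2)_k\,(q^{2\alpha+2};q^2)_k},
\]
whose coefficients converge, as $\alpha\to\infty$, to those of
\[
\sum_{k=0}^{\infty}\frac{(1-q)^{2k}(t/2)^{2k}}{(q^2;q^2)_k}=\frac{1}{((1-q)^2 t^2/4;q^2)_\infty}
\]
by the $q$-binomial theorem. The factorization $(x^2;q^2)_\infty=(x;q)_\infty(-x;q)_\infty$ rewrites the reciprocal as $e_q(t/2)e_q(-t/2)$, so the generating function (\ref{q14}) converges termwise to $e_q(xt)/e_q(t/2)=(t(1-q)/2;q)_\infty/(xt(1-q);q)_\infty$. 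One further application of the $q$-binomial theorem, now with $a=1/(2x)$ and $z=xt(1-q)$, yields $\sum_{n\ge0}x^n(1/(2x);q)_n\,t^n/[n]_q!$, and equating coefficients of $t^n$ gives (\ref{q10045}).

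I do not foresee any serious obstacle beyond bookkeeping. The only conceptual subtlety is justifying the coefficient-by-coefficient interchange of limits, but this is transparent for fixed $n$: only finitely many Taylor coefficients of $g^{(k)}_\alpha(it;q)$ appear, each of them converges in $\alpha$ from its explicit closed form, and the operations of inverting a series with constant term $1$ and of taking a Cauchy product are polynomial in those coefficients.
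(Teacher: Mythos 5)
Your argument is correct, and the computations check out (the exponent identity $j(j-1)/2+(n-j)(n-j-1)/2=n(n-1)/2+j^{2}-nj$, the termwise limits of $g^{(1)}_{\alpha}$ and $g^{(2)}_{\alpha}$, and both applications of the $q$-binomial theorem are all right), but your route differs from the paper's in how the limit is justified and how the limiting series are identified. The paper treats the generating functions analytically: it invokes Proposition \ref{kk} and Corollary \ref{nn} (and Lemma \ref{dd} for $k=3$) to produce an $\alpha_{0}$ beyond which $g^{(2)}_{\alpha}(it;q)$ is zero-free on $|t|\leq 1$, so that the generating function is analytic there and the limit can be interchanged with the infinite sum; it then identifies the limit for $B^{(2)}_{n,\alpha}$ by quoting Al-Salam's identity (\ref{q26}) for $E_{q}(xt)E_{q}(yt)$, and obtains $\lim_{\alpha\to\infty}g^{(1)}_{\alpha}(it;q)=e_{q}(t/2)e_{q}(-t/2)$ indirectly through the relation (\ref{q1698}). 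You instead observe that for fixed $n$ the coefficient $B^{(k)}_{n,\alpha}(x;q)/[n]_{q}!$ is a polynomial in finitely many Taylor coefficients of $g^{(k)}_{\alpha}(it;q)$ (reciprocal of a series with constant term $1$, then a Cauchy product with a fixed series), each of which converges explicitly as $\alpha\to\infty$; this makes the coefficientwise interchange purely algebraic and bypasses the zero-free-disk machinery altogether, which is more elementary and needs no restriction to $\alpha\geq\alpha_{0}$ or to $|t|\leq1$. You also recover the $G_{n}(-2x)$ form by a direct Cauchy-product computation rather than by citing (\ref{q26}), and you compute $\lim_{\alpha\to\infty}g^{(1)}_{\alpha}(it;q)$ directly from its series via Euler's summation rather than through (\ref{q1698}). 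What the paper's analytic route buys is locally uniform convergence of the generating functions themselves on a fixed disk (and it reuses lemmas needed elsewhere); what your route buys is a shorter, self-contained justification of exactly the coefficientwise limits the theorem asserts. The one point worth stating explicitly in your write-up is that the formal reciprocal of the limiting series coincides with the Taylor series of $1/\lim_{\alpha\to\infty}g^{(k)}_{\alpha}(it;q)$, which holds because the limit is analytic and equal to $1$ at $t=0$.
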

\begin{proof}{\small
Taking the  limit on both sides of Equation (\ref{q13}) as $ \alpha\rightarrow\infty$ we get
\begin{equation}\label{q3563}
 \lim_{\alpha\rightarrow\infty} \frac{E_{q}(xt)E_{q}(\frac{-t}{2})}{ g^{(2)}_{\alpha}(it;q)}= \lim_{\alpha\rightarrow\infty}\,\sum_{n=0}^{\infty} B^{(2)}_{n,\alpha}(x;q)\frac{t^{n}}{[n]_{q}!}.
\end{equation}
From Corollary  \ref{nn}, there exists $\alpha_{0}>-1$ such that
 $g^{(2)}_{\alpha}(it;q)$ has no zeros in $|t|\leq1$ for all $\alpha\geq\alpha_{0}$.  This means that $\dfrac{E_{q}(xt)E_{q}(\frac{-t}{2})}{ g^{(2)}_{\alpha}(it;q)}$ is analytic in $|t|\leq1$ for all $\alpha\geq\alpha_{0}$. Therefore, we can interchange the limit with the summation in (\ref{q3563}) when $|t|\leq1$ to obtain
\begin{equation*}
  \frac{E_{q}(xt)E_{q}(\frac{-t}{2})}{\displaystyle \lim_{\alpha\rightarrow\infty} g^{(2)}_{\alpha}(it;q)}= \sum_{n=0}^{\infty} \lim_{\alpha\rightarrow\infty}\, B^{(2)}_{n,\alpha}(x;q)\frac{t^{n}}{[n]_{q}!}.
\end{equation*}
Since
\begin{equation*}
 \lim_{\alpha\rightarrow\infty} g^{(2)}_{\alpha}(it;q)=1,\quad E_{q}(xt)E_{q}(yt)=\sum_{n=0}^{\infty}\frac{q^{n(n-1)/2}(ty)^{n}}{[n]_{q}!}G_{n}(\frac{x}{y}),
\end{equation*}
then from (\ref{q26})
\begin{equation}\label{q3659}
\begin{split}
 \sum_{n=0}^{\infty}\lim_{\alpha\rightarrow\infty} B^{(2)}_{n,\alpha}(x;q)\frac{t^{n}}{[n]_{q}!}
=\sum_{n=0}^{\infty}\frac{q^{n(n-1)/2}t^{n}}{[n]_{q}!}(\frac{-1}{2})^{n}G_{n}(-2x).\end{split}
\end{equation}
Equating  the coefficients of $ t^{n} $ in (\ref{q3659})  gives  (\ref{q10046}).  The proof of (\ref{q10045}) follows directly
from the relation (\ref{q1698}) since
\begin{equation*}
  1=\displaystyle\lim_{\alpha\rightarrow\infty}g^{(2)}_{\alpha}(it;q)= E_{q}(\frac{t}{2})E_{q}(\frac{-t}{2})\displaystyle\lim_{\alpha\rightarrow\infty}g^{(1)}_{\alpha}(it;q).
\end{equation*}
Hence
\begin{equation*}
  \displaystyle\lim_{\alpha\rightarrow\infty}g^{(1)}_{\alpha}(it;q)=e_{q}(\frac{t}{2})e_{q}(\frac{-t}{2}),\,\,|t(1-q)|<2.
\end{equation*}
Therefore, computing the limit in both sides of (\ref{q14}) gives
\begin{equation*}
  \frac {e_{q}(xt)}{e_{q}(\frac{t}{2})}= \sum_{n=0}^{\infty} \lim_{\alpha\rightarrow\infty}\, B^{(1)}_{n,\alpha}(x;q)\frac{t^{n}}{[n]_{q}!}.
\end{equation*}
From the $q$-binomial theorem (see \cite[ Eq.(1.3.2), P. 8]{Gasper}), we have
\begin{equation*}
  \frac {e_{q}(xt)}{e_{q}(\frac{t}{2})}= \frac{(\frac{t}{2}(1-q);q)_{\infty}}{(xt(1-q);q)_{\infty}}=\sum_{n=0}^{\infty} \frac{(\frac{1}{2x};q)_{n}}{(q;q)_{n}}(xt(1-q))^{n},\,\,|xt(1-q)|<1.
\end{equation*}
Hence
\begin{equation}\label{q15363}
\sum_{n=0}^{\infty} \lim_{\alpha\rightarrow\infty}\, B^{(1)}_{n,\alpha}(x;q)\frac{t^{n}}{[n]_{q}!}= \sum_{n=0}^{\infty}\frac{(xt)^{n}}{[n]_{q}!}(\frac{1}{2x};q)_{n},
\end{equation}
 equating the coefficients of $t^{n}$ in (\ref{q15363}) yields  the required result.}
\end{proof}
\begin{corollary}{\small
For $ n\in\mathbb{N}$,
\begin{equation}\label{q6735}
  \lim_{\alpha\rightarrow\infty}\,\beta_{n,\alpha}(q)=(-1)^{n}2^{-n}q^{\frac{n(n-1)}{2}}.
\end{equation}}
\end{corollary}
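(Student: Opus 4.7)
The plan is to recognize that the corollary follows immediately by specializing the preceding theorem at $x=0$, using the identification of $\beta_{n,\alpha}(q)$ with $B^{(k)}_{n,\alpha}(0;q)$.

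First I would observe that setting $x=0$ in the generating function (\ref{q14}) (or equivalently (\ref{q13})) and comparing with (\ref{q66}) gives $B^{(1)}_{n,\alpha}(0;q) = B^{(2)}_{n,\alpha}(0;q) = \beta_{n,\alpha}(q)$ for every $n \in \mathbb{N}_0$, since $e_q(0) = E_q(0) = 1$.

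Next, I would apply the limit formula (\ref{q10046}) at $x=0$, namely
\begin{equation*}
\lim_{\alpha\to\infty} B^{(2)}_{n,\alpha}(0;q) = \left(-\tfrac{1}{2}\right)^{n} q^{\frac{n(n-1)}{2}} G_n(0).
\end{equation*}
From the definition (\ref{q25}), $G_n(0) = 1$ (only the $k=0$ term survives), and so the right-hand side is exactly $(-1)^n 2^{-n} q^{n(n-1)/2}$, which yields (\ref{q6735}).

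As a consistency check (which I would include as a remark rather than a separate derivation), one can instead start from the limit (\ref{q10045}). Writing $x^{n}(1/(2x);q)_n = \prod_{j=0}^{n-1}(x - q^j/2)$ and evaluating at $x=0$ produces $\prod_{j=0}^{n-1}(-q^j/2) = (-1)^{n} 2^{-n} q^{n(n-1)/2}$, in agreement. Since both routes reduce to elementary identifications already carried out in the excerpt, there is no real obstacle in this proof; the only point requiring a moment of care is confirming that $B^{(k)}_{n,\alpha}(0;q) = \beta_{n,\alpha}(q)$ so that the corollary is a genuine consequence of the theorem rather than needing a separate limiting argument.
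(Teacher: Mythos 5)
Your proposal is correct and takes essentially the same approach as the paper: the paper proves the corollary by specializing the preceding theorem at $x=0$, computing $\lim_{x\rightarrow 0}x^{n}(\frac{1}{2x};q)_{n}=(-1)^{n}2^{-n}q^{\frac{n(n-1)}{2}}$ and substituting into (\ref{q10045}) --- which is exactly your consistency-check route --- while your primary route through (\ref{q10046}) and $G_{n}(0)=1$ is the same specialization via the sibling formula. The only added value in your write-up is making explicit the identification $B^{(k)}_{n,\alpha}(0;q)=\beta_{n,\alpha}(q)$, which the paper uses implicitly.
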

\begin{proof}{\small
 Since
\begin{equation*}
\begin{split}
  \lim_{x\rightarrow 0}x^{n}(\frac{1}{2x};q)_{n}= \lim_{x\rightarrow 0}x^{n}\prod_{k=0}^{n-1}(1-\frac{q^{k}}{2x})=\lim_{x\rightarrow 0}\prod_{k=0}^{n-1}(x-\frac{q^{k}}{2})=(-1)^{n}2^{-n}q^{\frac{n(n-1)}{2}},\end{split}
\end{equation*}
 then substituting with $x=0$ into (\ref{q10045}) yields (\ref{q6735}).}
\end{proof}

{\small\begin{lemma}\label{dd}
 Let $\alpha_{0}>-1$.  If   $q^{3/2}(1-q)^{2}<(1-q^{2})(1-q^{2\alpha_{0}+2})$,  then  $(q^{\frac{1}{4}}t/2)^{-\alpha}J_{\alpha}^{(3)}(\frac {t}{2}(1-q)q^{\frac{-1}{4}};q^{2})$ has no zeros in $|t|\leq 1$ for all $\alpha\geq\alpha_{0}.$
\end{lemma}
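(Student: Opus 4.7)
The plan is to follow Proposition \ref{kk} essentially verbatim, replacing the second Jackson $q$-Bessel function by the third and bookkeeping the extra $q^{-1/4}$ rescaling that appears in the argument of $J_{\alpha}^{(3)}$. Concretely, I would set
\[
F(t) := (q^{1/4}t/2)^{-\alpha} J_{\alpha}^{(3)}\!\left(\frac{t}{2}(1-q)q^{-1/4};q^{2}\right),
\]
and, after inserting the defining series of $J_{\alpha}^{(3)}(\cdot;q^{2})$ and cancelling the $(t/2)^{\alpha}$ factor, obtain
\[
F(t) = C(\alpha,q) \sum_{n=0}^{\infty}(-1)^{n} a_{n} t^{2n}, \qquad a_{n} := \frac{q^{n^{2}+n/2}(1-q)^{2n}}{2^{2n}(q^{2};q^{2})_{n}(q^{2\alpha+2};q^{2})_{n}},
\]
where $C(\alpha,q)=\frac{(q^{2\alpha+2};q^{2})_{\infty}}{(q^{2};q^{2})_{\infty}} q^{-\alpha/2}(1-q)^{\alpha}$ is a nonzero constant for $\alpha>-1$, so $F$ and the inner alternating sum share their zero set.

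The key computation is then
\[
\frac{a_{n+1}}{a_{n}} = \frac{q^{2n+3/2}(1-q)^{2}}{4(1-q^{2n+2})(1-q^{2\alpha+2n+2})},
\]
a quantity strictly decreasing in both $n\geq 0$ and $\alpha\geq\alpha_{0}$, since $q\in(0,1)$ makes the numerator shrink while each denominator factor grows. Its supremum is thus attained at $n=0$, $\alpha=\alpha_{0}$, and the hypothesis $q^{3/2}(1-q)^{2}<(1-q^{2})(1-q^{2\alpha_{0}+2})$ forces this supremum below $1/4<1$. Consequently $a_{n+1}<a_{n}$ for every $n\geq 0$ and every $\alpha\geq\alpha_{0}$.

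To finish, I would restrict to real $t$ with $|t|\leq 1$ and pair consecutive terms in the alternating sum,
\[
\sum_{n=0}^{\infty}(-1)^{n} a_{n} t^{2n} = \sum_{k=0}^{\infty} t^{4k} \bigl(a_{2k}-a_{2k+1}t^{2}\bigr) \geq a_{0}-a_{1} > 0,
\]
ruling out zeros on $[-1,1]$. Since $J_{\alpha}^{(3)}(\cdot;q^{2})$ has only real zeros for $\alpha>-1$ and $F$ is an even function of $t$, the absence of real zeros in $[-1,1]$ upgrades to absence of any zeros in the closed unit disk. The main bookkeeping subtlety, and the reason the threshold involves the specific power $q^{3/2}$ rather than the $q^{2(\alpha_{0}+1)}$ that appeared in Proposition \ref{kk}, is that the weight $q^{n(n+1)}$ in the series of $J_{\alpha}^{(3)}(\cdot;q^{2})$ combined with the $q^{-n/2}$ produced by the $q^{-1/4}$ rescaling yields the exponent $n^{2}+n/2$, whose dominant value at $n=0$ is independent of $\alpha$. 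This shifts the critical $q$-power controlling the ratio from $q^{2\alpha+2}$ to $q^{3/2}$.
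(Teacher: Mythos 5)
Your proposal is correct and is precisely the argument the paper intends: the paper omits this proof, stating only that it is analogous to Proposition \ref{kk}, and your write-up carries out that analogue with the right series coefficients $a_{n}=\frac{q^{n^{2}+n/2}(1-q)^{2n}}{2^{2n}(q^{2},q^{2\alpha+2};q^{2})_{n}}$, the correct ratio $\frac{a_{n+1}}{a_{n}}=\frac{q^{2n+3/2}(1-q)^{2}}{4(1-q^{2n+2})(1-q^{2\alpha+2n+2})}$, and the same pairing-plus-real-zeros conclusion. No gaps beyond those already implicit in the paper's own Proposition \ref{kk} (namely the standard fact that the zeros of $J_{\alpha}^{(3)}(\cdot;q^{2})$ are real for $\alpha>-1$), so nothing further is needed.
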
}
\begin{proof}
The proof is similar to the proof of Proposition \ref{kk} and is omitted.
\end{proof}
\begin{theorem}{\small
For $ n\in\mathbb{N}$,
\begin{equation}\label{q10047}
 \quad\quad\quad \quad\quad \lim_{\alpha\rightarrow\infty}\,B^{(3)}_{n,\alpha}(x;q)=q^{\frac{n(n-1)}{4}}(\frac{-1}{2})^{n}\sum_{k=0}^{[\frac{n}{2}]}\frac{(-1)^{k}q^{k(n-k+3)} (q^{-n};q)_{2k}} {(q^{2};q^{2})_{k}}(2xq^{\frac{1-n}{2}};q)_{n-2k},
  \end{equation}
\begin{equation}\label{g10047}
 \lim_{\alpha\rightarrow\infty}\,\beta^{(3)}_{n,\alpha}(q)=q^{\frac{n(n-1)}{4}}(\frac{-1}{2})^{n}\sum_{k=0}^{[\frac{n}{2}]}\frac{(-1)^{k}
 q^{k(n-k+3)}(q^{-n};q)_{2k}}{(q^{2};q^{2})_{k}}.
\end{equation}}
\end{theorem}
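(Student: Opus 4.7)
The plan is to mirror the proofs of (\ref{q10046}) and (\ref{q10045}), with Lemma \ref{dd} playing the role of Corollary \ref{nn} and with one extra preparatory step: an explicit closed form for $\tilde g(t):=\lim_{\alpha\to\infty} g^{(3)}_\alpha(it;q)$. First I would apply Lemma \ref{dd} to fix $\alpha_0>-1$ so that $g^{(3)}_\alpha(it;q)$ is zero-free on $|t|\leq 1$ for every $\alpha\geq\alpha_0$; this makes $\exp_q(xt)\exp_q(-t/2)/g^{(3)}_\alpha(it;q)$ analytic on that disk and legitimises interchanging $\lim_{\alpha\to\infty}$ with the power-series summation in (\ref{q197}).

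For $\tilde g(t)$, I would substitute $t\mapsto it$ in the series expansion of $\mathcal{J}^{(3)}_\alpha$ from (\ref{g7907}) to obtain $g^{(3)}_\alpha(it;q)=\sum_{n\geq 0} q^{n^{2}+n/2}(1-q)^{2n}t^{2n}/\bigl(4^n (q^{2};q^{2})_n (q^{2\alpha+2};q^{2})_n\bigr)$. Letting $\alpha\to\infty$ eliminates $(q^{2\alpha+2};q^{2})_n$, and the $q$-binomial theorem with base $q^{2}$, in the form $(z;q^{2})_\infty=\sum_n q^{n(n-1)}(-z)^n/(q^{2};q^{2})_n$, identifies the result as $\tilde g(t)=(-q^{3/2}(1-q)^{2}t^{2}/4;q^{2})_\infty$. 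Its reciprocal then expands as $1/\tilde g(t)=\sum_{k\geq 0}(-1)^k q^{3k/2}(1-q)^{2k}t^{2k}/\bigl(4^k(q^{2};q^{2})_k\bigr)$ via $1/(z;q)_\infty=\sum_k z^k/(q;q)_k$.

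Next, the Cauchy product of $1/\tilde g(t)$ with $\exp_q(xt)\exp_q(-t/2)=\sum_n \frac{(-1/2)^n q^{n(n-1)/4}}{[n]_q!}(2xq^{(1-n)/2};q)_n\,t^n$ (the expansion underlying the third identity of (\ref{q102})) produces, after extracting the coefficient of $t^n$, a single sum over $k\leq[n/2]$. To match the stated form I would rewrite $(1-q)^{2k}[n]_q!/[n-2k]_q!=(q;q)_n/(q;q)_{n-2k}$ and use (\ref{f798}) in the form $(q;q)_n/(q;q)_{n-2k}=q^{2kn-2k^{2}+k}(q^{-n};q)_{2k}$; expanding $(n-2k)(n-2k-1)=n^{2}-(4k+1)n+4k^{2}+2k$ then consolidates the three $q$-exponent contributions $3k/2$, $(n-2k)(n-2k-1)/4$ and $2kn-2k^{2}+k$ into the advertised $n(n-1)/4+k(n-k+3)$. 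Combining the sign and powers of $2$ as $(-1)^{n-k}/2^n=(-1/2)^n(-1)^k$ completes the derivation of (\ref{q10047}); the companion identity (\ref{g10047}) then follows by setting $x=0$ and using $(0;q)_m=1$, so every Pochhammer factor in the sum collapses to $1$.

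I expect the two delicate spots to be (i) identifying $\tilde g(t)$ as a Pochhammer symbol — the $q$-power $n^{2}+n/2$ must be rewritten as $n(n-1)+3n/2$ so that the $q$-binomial theorem with base $q^{2}$ applies cleanly — and (ii) the $q$-exponent bookkeeping inside the Cauchy product. No new analytic input beyond Lemma \ref{dd}, the expansion underlying (\ref{q102}), and the standard $q$-binomial theorem is required.
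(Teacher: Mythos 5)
Your outline follows exactly the route the paper takes: Lemma \ref{dd} supplies a zero-free disk $|t|\leq 1$ for all large $\alpha$, which justifies interchanging $\lim_{\alpha\to\infty}$ with the summation in (\ref{q197}); the limit of $g^{(3)}_{\alpha}(it;q)$ is identified as $(-q^{3/2}(1-q)^{2}t^{2}/4;q^{2})_{\infty}$; its reciprocal is expanded by Euler's identity; and the coefficient of $t^{n}$ in the Cauchy product with $\exp_{q}(xt)\exp_{q}(-t/2)$ is extracted, with $x=0$ yielding (\ref{g10047}). Your sign and power-of-two accounting, and the conversion $(q;q)_{n}/(q;q)_{n-2k}=q^{2kn-2k^{2}+k}(q^{-n};q)_{2k}$ via (\ref{f798}), are all correct.

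There is, however, a genuine gap in the coefficient-extraction step, and it is the same slip that occurs in the paper's own display (\ref{q2363}). The degree-$(n-2k)$ coefficient of $\exp_{q}(xt)\exp_{q}(-t/2)$ is $\frac{(-1/2)^{n-2k}q^{(n-2k)(n-2k-1)/4}}{[n-2k]_{q}!}\,(2xq^{\frac{1-(n-2k)}{2}};q)_{n-2k}$, so the Pochhammer base is $2xq^{\frac{1-n}{2}+k}$ and depends on $k$, whereas you (and the printed theorem) keep it fixed at $2xq^{\frac{1-n}{2}}$. Your exponent consolidation ($3k/2$ plus $(n-2k)(n-2k-1)/4$ plus $2kn-2k^{2}+k$ giving $n(n-1)/4+k(n-k+3)$) handles the explicit powers of $q$ correctly, but it cannot repair the base of the Pochhammer, and no identity can: already for $n=3$ the coefficient of $x$ in $\lim_{\alpha\to\infty}B^{(3)}_{3,\alpha}(x;q)$ equals $\frac{q^{1/2}(1+q^{2}+q^{4})}{4}$ (compute it either from the listed values of $\beta^{(3)}_{j,\alpha}(q)$ or directly from the limiting generating function), while the right-hand side of (\ref{q10047}) gives $\frac{q^{3/2}(1+q+q^{2})}{4}$. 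Carried out honestly, your Cauchy product proves (\ref{q10047}) with $(2xq^{\frac{1-n}{2}+k};q)_{n-2k}$ in place of $(2xq^{\frac{1-n}{2}};q)_{n-2k}$; you should either prove that corrected statement or flag the misprint rather than assert that the computation lands on the advertised form. The identity (\ref{g10047}) is unaffected, since at $x=0$ every Pochhammer factor collapses to $1$.
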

\begin{proof}{\small
Taking the limit as $\alpha\rightarrow\infty$ on both sides of (\ref{q197}), we obtain
\begin{equation}\label{q3593}
 \displaystyle\lim_{\alpha\rightarrow\infty} \frac{\exp_{q}(xt)\exp_{q}(\frac{-t}{2})}{ g^{(3)}_{\alpha}(it;q)}= \lim_{\alpha\rightarrow\infty}\,\sum_{n=0}^{\infty} B^{(3)}_{n,\alpha}(x;q)\frac{t^{n}}{[n]_{q}!}.
\end{equation}
We can choose $\alpha_{0}>-1$ such that
$$q^{3/2}\leq\frac{(1-q^{2})}{1-q}\frac{(1-q^{2\alpha_{0}+2})}{1-q}\leq\frac{(1-q^{2})}{1-q}\frac{(1-q^{2\alpha+2})}{1-q},$$
for all $\alpha\geq\alpha_{0}$.  Hence from Lemma \ref{dd}, the function $g^{(3)}_{\alpha}(it;q)$ does not vanish on the unit disk, and  the left hand side of (\ref{q3593})  is analytic for $|t|\leq1$. Therefore, we can interchange the limit  as $\alpha\rightarrow\infty$  with the summation in (\ref{q3593}) to obtain
\begin{equation*}
  \frac{\exp_{q}(xt)\exp_{q}(\frac{-t}{2})}{\displaystyle \lim_{\alpha\rightarrow\infty} g^{(3)}_{\alpha}(it;q)}= \sum_{n=0}^{\infty} \lim_{\alpha\rightarrow\infty}\, B^{(3)}_{n,\alpha}(x;q)\frac{t^{n}}{[n]_{q}!}.
\end{equation*}
Since
\begin{equation*}
\begin{split}
 \lim_{\alpha\rightarrow\infty} g^{(3)}_{\alpha}(it;q)&=
 \sum_{n=0}^{\infty}\lim_{\alpha\rightarrow\infty}\frac{q^{n^{2}+\frac{n}{2}}(1-q)^{2n}}{(q^{2},q^{2\alpha+2};q^{2})_{n}}(\frac{t}{2})^{2n} \\& =\sum_{n=0}^{\infty}\frac{q^{n^{2}+\frac{n}{2}}(1-q)^{2n}(\frac{t}{2})^{2n}}{(q^{2};q^{2})_{n}} =(-\frac{q^{\frac{3}{2}}(1-q)^{2}t^{2}}{4};q^{2})_{\infty}.\end{split}
\end{equation*}
Hence
\begin{equation}\label{q4363}
 \frac{ \exp_{q}(xt)\exp_{q}(\frac{-t}{2})}{(-q^{\frac{3}{2}}(1-q)^{2}t^{2}/4;q^{2})_{\infty}}= \sum_{n=0}^{\infty} \lim_{\alpha\rightarrow\infty}\, B^{(3)}_{n,\alpha}(x;q)\frac{t^{n}}{[n]_{q}!}.
\end{equation}
But
{\small\begin{equation*}
\begin{split}
 \exp_{q}(xt)\exp_{q}(\frac{-t}{2})&=\sum_{n=0}^{\infty}\frac{(\frac{-1}{2})^{n}q^{\frac{n(n-1)}{4}}t^{n}}{[n]_{q}!} (2xq^{\frac{1-n}{2}};q)_{n}.
\end{split}
\end{equation*}}

Therefore,
{\small\begin{equation}\label{q2363}
\begin{split}
 \frac{\exp_{q}(xt)\exp_{q}(\frac{-t}{2})}{(-q^{\frac{3}{2}}(1-q)^{2}t^{2}/4;q^{2})_{\infty}}&=\left(\sum_{n=0}^{\infty}\frac{(-1)^{n}q^{\frac{3}{2}n}
 (\frac{(1-q)t}{2})^{2n}}
 {(q^{2};q^{2})_{n}}\right)\left(\sum_{n=0}^{\infty}\frac{q^{\frac{n(n-1)}{4}}(\frac{-t}{2})^{n}}{[n]_{q}!} (2xq^{\frac{1-n}{2}};q)_{n}\right)\\&
 =\sum_{n=0}^{\infty}q^{\frac{n(n-1)}{4}}\left(\frac{-t(1-q)}{2}\right)^{n}\sum_{k=0}^{[\frac{n}{2}]}\frac{(-1)^{k}q^{\frac{3}{2}k}q^{k^{2}-nk+k/2}}
 {(q^{2};q^{2})_{k}(q;q)_{n-2k}}
 (2xq^{\frac{1-n}{2}};q)_{n-2k}\\&=\sum_{n=0}^{\infty}\frac{q^{\frac{n(n-1)}{4}}(\frac{-t}{2})^{n}}{[n]_{q}!}\sum_{k=0}^{[\frac{n}{2}]}\frac{(-1)^{k}
 q^{k(n-k+3)}(q^{-n};q)_{2k}}{(q^{2};q^{2})_{k}} (2xq^{\frac{1-n}{2}};q)_{n-2k}.
\end{split}
\end{equation}}
Substituting from (\ref{q2363}) into (\ref{q4363}) and equating the coefficients of $t^{n} $ yields (\ref{q10047}). The proof of (\ref{g10047})  follows directly by setting $x=0$ in (\ref{q10047}).
}
\end{proof}
{\small\begin{theorem}\label{yy}
Let $\alpha $ be a complex number such that $ Re\,\alpha >-1$. Then for\\ $n\in\mathbb{N},\,n\,\geq\,2$,
\begin{align}\label{q72}
  \beta_{n,\alpha}(q)&=-\frac{[n]_{q}!(1-q)^{2}}{4}\sum_{k=0}^{[\frac{n}{2}]-1}\frac{\left((1-q)/2\right)^{2k}\beta_{n-2k-2,\alpha}(q)}{[n-2k-2]_{q}!(q^{2},q^{2\alpha+2};q^{2})_{k+1}}
  +\frac{(-1)^{n}}{2^{n}},\\
  \beta_{n,\alpha}^{(3)}(q)&=-\frac {[n]_{q}!q^{3/2}(1-q)^{2}}{4}\sum_{k=0}^{[\frac{n}{2}]-1}\frac{q^{k^{2}+5k/2}\left((1-q)/2\right)^{2k}\beta^{(3)}_{n-2k-2,\alpha}(q)}
  {[n-2k-2]_{q}!(q^{2},q^{2\alpha+2};q^{2})_{k+1}}+q^{\frac{n(n-1)}{4}}\frac{(-1)^{n}}{2^{n}}.
\end{align}
\end{theorem}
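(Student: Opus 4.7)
The plan is to recognize that this theorem is essentially a solved-form restatement of the recurrences already established in (\ref{q1902}). Each of those two identities has the form $\sum_{k=0}^{[n/2]} (\text{something})_k \,\beta^{(\cdot)}_{n-2k,\alpha}(q) = (\text{RHS})$, and the $k=0$ term is exactly $\beta^{(\cdot)}_{n,\alpha}(q)/[n]_q!$. So the whole proof reduces to isolating the $k=0$ term and reindexing the remaining sum.

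For the first identity, I would start from the top relation in (\ref{q1902}), split off $k=0$, multiply by $[n]_q!$, and rearrange to get
\begin{equation*}
\beta_{n,\alpha}(q) = \frac{(-1)^n}{2^n} \; - \; [n]_q!\sum_{k=1}^{[n/2]} \frac{(1-q)^{2k}\,\beta_{n-2k,\alpha}(q)}{2^{2k}\,[n-2k]_q!\,(q^2,q^{2\alpha+2};q^2)_k}.
\end{equation*}
Then I would shift the dummy index by letting $k \mapsto k+1$, which turns the sum into $\sum_{k=0}^{[n/2]-1}$ and produces factors $(1-q)^{2k+2}/2^{2k+2} = \frac{(1-q)^2}{4}\bigl((1-q)/2\bigr)^{2k}$ and $(q^2,q^{2\alpha+2};q^2)_{k+1}$ in the denominator. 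That yields the first claimed formula verbatim.

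The second identity is handled in exactly the same way, starting from the bottom relation in (\ref{q1902}). The only bookkeeping point worth watching is the $q$-power inside the sum: after shifting $k \mapsto k+1$, the exponent $k^2+k/2$ becomes $(k+1)^2+(k+1)/2 = k^2+\tfrac{5k}{2}+\tfrac{3}{2}$, so one factor of $q^{3/2}$ comes out as the prefactor and the remaining $q^{k^2+5k/2}$ stays inside the sum, matching the statement precisely.

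There is no real obstacle here; this is a straightforward algebraic manipulation of (\ref{q1902}). The mildly delicate step is simply tracking the arithmetic of the $q$-exponent when reindexing the second recurrence, and confirming that $n \ge 2$ is indeed the correct range (so that $[n/2]-1 \ge 0$ and the inhomogeneous terms $\beta^{(\cdot)}_{n-2k-2,\alpha}(q)$ are well-defined, using $\beta_{0,\alpha}(q)=\beta^{(3)}_{0,\alpha}(q)=1$ as the base cases).
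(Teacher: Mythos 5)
Your proposal is correct: isolating the $k=0$ term of each recurrence in (\ref{q1902}), multiplying by $[n]_{q}!$, and shifting $k\mapsto k+1$ does reproduce both formulas, and your exponent bookkeeping $(k+1)^{2}+\tfrac{k+1}{2}=k^{2}+\tfrac{5k}{2}+\tfrac{3}{2}$, which extracts the prefactor $q^{3/2}$, is exactly right; there is also no circularity, since (\ref{q1902}) is obtained earlier by setting $x=0$ in (\ref{q102}), independently of the theorem being proved. However, this is a genuinely different route from the paper's. The paper proves the statement directly from the generating function (\ref{q66})/(\ref{q149}): it subtracts $e_{q}(-\tfrac{t}{2})$ from both sides, factors the left side as $\frac{e_{q}(-t/2)}{g^{(1)}_{\alpha}(it;q)}\bigl(1-g^{(1)}_{\alpha}(it;q)\bigr)$, expands $g^{(1)}_{\alpha}(it;q)-1$ as a power series in $t^{2}$ (equation (\ref{q54})), and equates coefficients after a Cauchy product; the case of $\beta^{(3)}_{n,\alpha}(q)$ is treated analogously. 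Your argument buys brevity by reusing an already-established identity and reducing the proof to pure reindexing, making transparent that (\ref{q72}) is just the solved form of (\ref{q1902}); the paper's argument buys self-containedness, since it does not lean on the earlier theorem and its specialization at $x=0$, at the cost of redoing a generating-function computation whose content largely overlaps with the one behind (\ref{q102}). Your closing remarks on the range $n\geq 2$ (so that $[\tfrac{n}{2}]-1\geq 0$) and on the base values $\beta_{0,\alpha}(q)=\beta^{(3)}_{0,\alpha}(q)=1$ are appropriate and consistent with the paper.
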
}
\begin{proof}{\small
We give in detail the proof of (\ref{q72}). The proof for $\beta_{n,\alpha}^{(3)}(q)$ is similar. Since
\begin{equation}\label{q149}
  \frac{e_{q}(\frac{-t}{2})}{g^{(1)}_{\alpha}(it;q)}=\sum_{n=0}^{\infty}\beta_{n,\alpha}(q)\frac{t^{n}}{[n]_{q}!},
\end{equation}
then
\begin{equation*}
\frac{e_{q}(\frac{-t}{2})}{g^{(1)}_{\alpha}(it;q)}-e_{q} (\frac{-t}{2}) =\sum_{n=0}^{\infty}\beta_{n,\alpha}(q)\frac{t^{n}}{[n]_{q}!}- e_{q} (\frac{-t}{2}).
\end{equation*}
Consequently, from the series representation of $e_{q}(t)$ in (\ref{h8}), we get
\begin{equation}\label{q5002} \frac{e_{q}(\frac{-t}{2})}{g^{(1)}_{\alpha}(it;q)}\left(1-g^{(1)}_{\alpha}(it;q)\right)=\sum_{n=0}^{\infty}\left(\beta_{n,\alpha}(q)
-\frac{(-1)^{n}}{2^{n}}\right)\frac{t^{n}}{[n]_{q}!}.
\end{equation}
Since
\begin{equation}\label{q54}\begin{split}
 \left(g^{(1)}_{\alpha}(it;q)-1\right)=t^{2} \sum_{m=0}^{\infty}\frac{(1-q)^{2m+2}t^{2m}}{2^{2m+2}(q^{2},q^{2\alpha+2};q^{2})_{m+1}},\end{split}
\end{equation}
then substituting from (\ref{q54}) into (\ref{q5002}) and using (\ref{q149}),  we obtain
\begin{equation*}
\begin{split}
  \left( \sum_{n=0}^{\infty}\beta_{n,\alpha}(q)\frac{t^{n}}{[n]_{q}!}\right)\left(-t^{2} \sum_{n=0}^{\infty}\frac{(1-q)^{2n+2}t^{2n}}{2^{2n+2}(q^{2},q^{2\alpha+2};q^{2})_{n+1}}\right)=\sum_{n=0}^{\infty}\left(\beta_{n,\alpha}(q)-
\frac{(-1)^{n}}{2^{n}}\right)\frac{t^{n}}{[n]_{q}!}.\end{split}
\end{equation*}
Therefore, by the Cauchy product formula
\begin{equation}\label{q59}
-\frac{(1-q)^{2}}{4}\sum_{n=2}^{\infty}t^{n}\sum_{k=0}^{[\frac{n}{2}]-1}\frac{(1-q)^{2k}\beta_{n-2k-2,\alpha}(q)}{2^{2k}[n-2k-2]_{q}!(q^{2},q^{2\alpha+2};q^{2})_{k+1}}
=\sum_{n=0}^{\infty}\left(\beta_{n,\alpha}(q)-\frac{(-1)^{n}}{2^{n}}\right)\frac{t^{n}}{[n]_{q}!}.
\end{equation}
Equating  the coefficient of $ t^{n}$ in (\ref{q59}), we get (\ref{q72}) and the theorem follows.}
\end{proof}
\quad
\quad
{\small The following theorem gives a recursive relations between the polynomials $B^{(k)}_{n,\alpha}(x;q)$ and  $B^{(k)}_{n,\alpha+1}(x;q)\,(k=2,3)$.}
\begin{theorem}{\small
If $ Re\,\alpha > -1 $ , $ x\in \mathbb{C},$  and $ k \in\,\mathbb{N}$,  then
\begin{equation*}
\frac {B^{(r)}_{n,\alpha}(x;q)}{[n]_{q}!}=2(1-q^{2\alpha+2})\sum_{k=0}^{[\frac{n}{2}]}(-1)^{k}\frac{(1-q)^{2k}\,h^{(r)}_{k+1}(q^{2})\,}
{[n-2k]_{q}!}B^{(r)}_{n-2k,\alpha+1}(x;q)\quad (r=2,3),
\end{equation*}
where
\begin{equation*}
  h^{(r)}_{k}(q^{2})=\sum_{m=1}^{\infty}\frac{-2J^{(r)}_{\alpha+1}(j^{(r)}_{m,\alpha};q^{2})}{ \frac{d}{dz}J^{(r)}_{\alpha}(z;q^{2})|_{z=j^{(r)}_{m,\alpha}}}\left(\frac{1}{j^{(r)}_{m,\alpha}}\right)^{2k},
\end{equation*}
and $(j^{(r)}_{m,\alpha})_{m=1}^{\infty}\,\,(r=2,3)$  are the positive zero of $ J^{(r)}_{\alpha}( \cdots;q^{2}).$}
\end{theorem}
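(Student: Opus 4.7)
The strategy is to reduce the claim to a Mittag--Leffler (partial-fraction) expansion of the ratio $g^{(r)}_{\alpha+1}(it;q)/g^{(r)}_{\alpha}(it;q)$ and then match coefficients via the Cauchy product. Starting from the generating functions (\ref{q13}) and (\ref{q197}), dividing and multiplying appropriately gives
$$\sum_{n=0}^{\infty}B^{(r)}_{n,\alpha}(x;q)\frac{t^{n}}{[n]_{q}!}=\frac{g^{(r)}_{\alpha+1}(it;q)}{g^{(r)}_{\alpha}(it;q)}\sum_{n=0}^{\infty}B^{(r)}_{n,\alpha+1}(x;q)\frac{t^{n}}{[n]_{q}!}.$$
Thus the entire theorem reduces to establishing the even power series
$$\frac{g^{(r)}_{\alpha+1}(it;q)}{g^{(r)}_{\alpha}(it;q)}=2(1-q^{2\alpha+2})\sum_{k=0}^{\infty}(-1)^{k}(1-q)^{2k}\,h^{(r)}_{k+1}(q^{2})\,t^{2k}$$
on the disk $|t|<j^{(r)}_{1,\alpha}/(1-q)$; the Cauchy product formula then yields the formula for $B^{(r)}_{n,\alpha}(x;q)/[n]_{q}!$ by equating coefficients of $t^{n}$.

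To obtain the expansion, I introduce $G^{(r)}_{\alpha}(u):=u^{-\alpha}J^{(r)}_{\alpha}(u;q^{2})$, an even entire function of $u$ whose nonzero zeros are $\pm j^{(r)}_{m,\alpha}$. Substituting the definition of $g^{(r)}_{\alpha}$, using $\Gamma_{q^{2}}(\alpha+2)/\Gamma_{q^{2}}(\alpha+1)=(1-q^{2\alpha+2})/(1-q^{2})$, and cancelling the $\alpha$-powers of $(it)$ yields
$$\frac{g^{(r)}_{\alpha+1}(it;q)}{g^{(r)}_{\alpha}(it;q)}=2(1-q^{2\alpha+2})\,\frac{G^{(r)}_{\alpha+1}(\lambda_{r}(t))}{G^{(r)}_{\alpha}(\lambda_{r}(t))},$$
where $\lambda_{r}(t)$ is linear in $t$ with $\lambda_{r}(t)^{2}$ a negative multiple of $t^{2}$ (explicitly $\lambda_{2}(t)=i(1-q)t$, and the analogous rescaling for $r=3$). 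The meromorphic function $u\mapsto G^{(r)}_{\alpha+1}(u)/G^{(r)}_{\alpha}(u)$ has simple poles at $u=\pm j^{(r)}_{m,\alpha}$, and a direct residue computation combined with evenness (which pairs $+j^{(r)}_{m,\alpha}$ with $-j^{(r)}_{m,\alpha}$) gives
$$\frac{G^{(r)}_{\alpha+1}(u)}{G^{(r)}_{\alpha}(u)}=2\sum_{m=1}^{\infty}\frac{J^{(r)}_{\alpha+1}(j^{(r)}_{m,\alpha};q^{2})}{\partial_{z}J^{(r)}_{\alpha}(z;q^{2})|_{z=j^{(r)}_{m,\alpha}}}\cdot\frac{1}{u^{2}-(j^{(r)}_{m,\alpha})^{2}}.$$
Setting $u=\lambda_{r}(t)$ converts each rational factor into $-1/((j^{(r)}_{m,\alpha})^{2}+\text{const}\cdot t^{2})$, which I expand as a geometric series in $t^{2}/(j^{(r)}_{m,\alpha})^{2}$; interchanging the order of summation (justified by absolute convergence inside the claimed disk) and recognising the inner sum as exactly $h^{(r)}_{k+1}(q^{2})$ by its definition delivers the even power series above.

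The main obstacle is justifying the Mittag--Leffler expansion, i.e.\ ruling out an additive entire contribution. This requires a uniform bound on $G^{(r)}_{\alpha+1}(u)/G^{(r)}_{\alpha}(u)$ on a sequence of expanding circles $|u|=R_{N}$ chosen to separate consecutive zeros, together with control of $\partial_{z}J^{(r)}_{\alpha}(j^{(r)}_{m,\alpha};q^{2})$ so that the residues are summable. The order-zero growth of $J^{(2)}_{\alpha}(\cdot;q^{2})$ and $J^{(3)}_{\alpha}(\cdot;q^{2})$, visible from the $q^{n(n+\alpha)}$ and $q^{n(n+1)/2}$ factors in their series, together with the standard $q$-analogue of Hurwitz's theorem on interlacing of consecutive Bessel zeros, should make these estimates available; however, carrying them out rigorously, and verifying the legitimacy of all interchanges of infinite summations, is the technically delicate part of the proof. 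The $r=3$ case is structurally identical but uses the rescaling encoded in the definition of $g^{(3)}_{\alpha}$, and the same contour argument applies with cosmetic changes.
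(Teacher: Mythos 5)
Your reduction is exactly the paper's: write the generating function with parameter $\alpha$ as $\frac{g^{(r)}_{\alpha+1}(it;q)}{g^{(r)}_{\alpha}(it;q)}$ times the generating function with parameter $\alpha+1$, expand that ratio as an even power series in $t$ with coefficients $2(1-q^{2\alpha+2})(-1)^{k}(1-q)^{2k}h^{(r)}_{k+1}(q^{2})$, and equate coefficients of $t^{n}$ via the Cauchy product; your bookkeeping of the constants (the quotient $\Gamma_{q^{2}}(\alpha+2)/\Gamma_{q^{2}}(\alpha+1)$, the factor $i^{2k}$, the shift $n=k+1$) agrees with the paper's computation leading to (\ref{q215}), and your residue pairing of $\pm j^{(r)}_{m,\alpha}$ reproduces the correct coefficients $h^{(r)}_{k+1}(q^{2})$. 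The one place you diverge is that you undertake to prove the partial-fraction (Mittag--Leffler) expansion of $J^{(r)}_{\alpha+1}(\cdot\,;q^{2})/J^{(r)}_{\alpha}(\cdot\,;q^{2})$ yourself, and you explicitly leave its analytic core unestablished: the growth bound on circles $|u|=R_{N}$ separating consecutive zeros, the exclusion of an additive entire part, the summability of the residues, and the interchange of the two infinite sums inside $|t|<j^{(r)}_{1,\alpha}/(1-q)$. The paper does not prove this step either; it quotes the identity $J^{(2)}_{\alpha+1}(t;q)/J^{(2)}_{\alpha}(t;q)=\sum_{n\geq1}h^{(2)}_{n}(q)t^{2n-1}$ from \cite{ZMA,AZ} (Equation (\ref{q8219})) and its third-Jackson analogue from \cite{abr}, and then performs precisely your substitution $t\mapsto it(1-q)$, $q\mapsto q^{2}$. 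So your argument is complete modulo exactly the lemma the paper cites: either import those references as the paper does, or carry out the contour estimates you only sketch (for which the asymptotics of $z^{-\nu}J^{(2)}_{\nu}(z;q)$ and of the zeros $j^{(r)}_{m,\alpha}$ used in Section 3 of the paper are the natural ingredients); as written, that step is asserted rather than proved.
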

\begin{proof}{\small
We start with  the proof of the case $(r=2)$. From \cite{ZMA,AZ}, we have the identity
\begin{equation}\label{q8219}
  \frac{J^{(2)}_{\alpha+1}(t;q)}{J^{(2)}_{\alpha}(t;q)}=\sum_{n=1}^{\infty}h^{(2)}_{n}(q)t^{2n-1},
\end{equation}
where
\begin{equation*}
  h^{(2)}_{n}(q)=\sum_{m=1}^{\infty}\frac{-2J^{(2)}_{\alpha+1}(j^{(2)}_{m,\alpha};q^{2})}{ \frac{d}{dz}J^{(2)}_{\alpha}(z;q^{2})|_{z=j^{(2)}_{m,\alpha}} }\left(\frac{1}{j^{(2)}_{m,\alpha}}\right)^{2n}.
\end{equation*}
Replacing $ t$ by $ it(1-q)$ and $ q$ by $ q^{2}$  in (\ref{q8219}),  we obtain
\begin{equation}\label{q210}
   \frac{1}{J^{(2)}_{\alpha}(it(1-q);q^{2})}= \frac{1}{J^{(2)}_{\alpha+1}(it(1-q);q^{2})}\sum_{n=1}^{\infty}h^{(2)}_{n}(it(1-q))^{2n-1}.
\end{equation}
Multiplying (\ref{q210}) by $ E_{q}(xt) E_{q}(\frac{-t}{2})$ to obtain
\begin{equation}\label{q211}
  \frac{E_{q}(xt) E_{q}(\frac{-t}{2})}{J^{(2)}_{\alpha}(it(1-q);q^{2})}= \frac{E_{q}(xt) E_{q}(\frac{-t}{2})}{J^{(2)}_{\alpha  +1}(it(1-q);q^{2})}\sum_{n=1}^{\infty}h^{(2)}_{n}(q^{2})(it(1-q))^{2n-1}.
\end{equation}
Substituting from (\ref{q8432}) into (\ref{q211}),  we get
\begin{equation}\label{q213}
\begin{split}
  \frac{E_{q}(xt) E_{q}(\frac{-t}{2})}{g^{(2)}_{\alpha}(it;q)}=\frac{(1+q)[\alpha+1]_{q^{2}}}{(\frac{it}{2})}\frac{E_{q}(xt) E_{q}(\frac{-t}{2})}{ g^{(2)}_{\alpha+1}(it;q)}\sum_{n=1}^{\infty}h^{(2)}_{n}(q^{2})(it(1-q))^{2n-1}.
 \end{split}
\end{equation}
Consequently,
\begin{equation*}
\begin{split}
  \sum_{n=0}^{\infty}B^{(2)}_{n,\alpha}(x;q)\frac{t^{n}}{[n]_{q}!}&=\frac{2(1+q)}{it}[\alpha+1]_{q^{2}}\left(\sum_{n=0}^{\infty}B^{(2)}_{n,\alpha+1}(x;q)
  \frac{t^{n}}{[n]_{q}!}
  \right)\left(\sum_{n=1}^{\infty}h^{(2)}_{n}(q^{2})(it(1-q))^{2n-1}\right)\\&=2(1-q^{2\alpha+2})
 \left(  \sum_{n=0}^{\infty}B^{(2)}_{n,\alpha+1}(x;q)\frac{t^{n}}{[n]_{q}!}\right) \left( \sum_{n=0}^{\infty}h_{n+1}(q^{2})t^{2n}(i(1-q))^{2n} \right) \\&=2(1-q^{2\alpha+2})\sum_{n=0}^{\infty}t^{n}\sum_{k=0}^{[\frac{n}{2}]}(-1)^{k}
  \frac{(1-q)^{2k}
 \,h^{(2)}_{k+1}(q^{2})}{[n-2k]_{q}!}B^{(2)}_{n-2k,\alpha+1}(x;q).
  \end{split}
\end{equation*}
 Hence
\begin{equation}\label{q215}
 \sum_{n=0}^{\infty}B^{(2)}_{n,\alpha}(x;q)\frac{t^{n}}{[n]_{q}!}={2(1-q^{2\alpha+2})}\sum_{n=0}^{\infty}t^{n}\sum_{k=0}^{[\frac{n}{2}]}(-1)^{k}\frac{(1-q)^{2k}
 \,h^{(2)}_{k+1}(q^{2})\,}{[n-2k]_{q}!}B^{(2)}_{n-2k,\alpha+1}(x;q).
\end{equation}
 Equating the coefficients of $ t^{n}$  in (\ref{q215}), we get the result for $(r=2)$. The proof of the case $(r=3)$ follows from the identity (see \cite[ Eq. (4.3), P. 6]{abr}),
 \begin{equation*}
\begin{split}
  \frac{J^{(3)}_{\alpha+1}(t;q)}{J^{(3)}_{\alpha}(t;q)}=\sum_{n=1}^{\infty}h^{(3)}_{n}(q)t^{2n-1},
  \end{split}
\end{equation*}
where
\begin{equation*}
  h^{(3)}_{n}(q)=\sum_{m=1}^{\infty}\frac{-2J^{(3)}_{\alpha+1}(j^{(3)}_{m,\alpha};q^{2})}{ \frac{d}{dz}J^{(3)}_{\alpha}(z;q^{2})|_{z=j^{(3)}_{m,\alpha}} }\left(\frac{1}{j^{(3)}_{m,\alpha}}\right)^{2n},
\end{equation*}
 and by using the same technique.}
\end{proof}

\section{Asymptotic relations for the generalized $q$-Bernoulli numbers}
In this section,  we derive asymptotic relations for the generalized $q$-Bernoulli numbers defined in (\ref{q66}).
\begin{theorem}\label{q9700}{\small
Let $n$ be a non negative integer and $\alpha$ be a complex number such that\\ $Re\,\alpha>-1$. Then for $n\in\mathbb{N},$
\begin{align}\label{s1112}
\begin{split}
\beta_{2n,\alpha}(q)&=2(-1)^{n+1}(q;q)_{2n}
\sum_{k=1}^{\infty}\frac{Cos_{q}(\frac{j^{(2)}_{k,\alpha}}{2(1-q)})}{(j^{(2)}_{k,\alpha})^{2n+1}\,
\frac{d}{dz}\mathcal{J}_{\alpha}^{(2)}(z;q^{2})|_{z=j^{(2)}_{k,\alpha}}},\\
\beta_{2n+1,\alpha}(q)&=2(-1)^{n}(q;q)_{2n+1}
\sum_{k=1}^{\infty}\frac{Sin_{q}(\frac{j^{(2)}_{k,\alpha}}{2(1-q)})}
{(j^{(2)}_{k,\alpha})^{2n+2}\,\frac{d}{dz}\mathcal{J}_{\alpha}^{(2)}(z;q^{2})|_{z=j^{(2)}_{k,\alpha}}},
\end{split}
\end{align}
where $\mathcal{J}_{\alpha}^{(2)} (z;q)$ is defined in (\ref{g7907}).
}
\end{theorem}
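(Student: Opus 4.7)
The plan is to extract $\beta_{n,\alpha}(q)$ from its generating function by Cauchy's integral formula and then evaluate the resulting contour integral by summing residues at the zeros of $g_{\alpha}^{(2)}(it;q)$. I would prefer to work with the second form of the generating function in \eqref{q66}, that is
\[
\frac{E_{q}(-t/2)}{g_{\alpha}^{(2)}(it;q)}=\sum_{n=0}^{\infty}\beta_{n,\alpha}(q)\frac{t^{n}}{[n]_{q}!},
\]
because $E_{q}$ is entire, so all poles of the left-hand side come from the zeros of $g_{\alpha}^{(2)}(it;q)=\mathcal{J}_{\alpha}^{(2)}(it(1-q);q^{2})$. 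Since $\mathcal{J}_{\alpha}^{(2)}(\cdot;q^{2})$ is an even function, these poles are simple and occur exactly at $t=\pm t_{k}$ with $t_{k}=-ij_{k,\alpha}^{(2)}/(1-q)$, $k\in\mathbb{N}$.

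Next, I would write
\[
\beta_{n,\alpha}(q)=\frac{[n]_{q}!}{2\pi i}\oint_{|t|=R_{N}}\frac{E_{q}(-t/2)}{g_{\alpha}^{(2)}(it;q)\,t^{n+1}}\,dt,
\]
choose radii $R_{N}$ between consecutive zeros with $R_{N}\to\infty$, and argue that the integral over $|t|=R_{N}$ tends to zero as $N\to\infty$. Then by the residue theorem
\[
\beta_{n,\alpha}(q)=-[n]_{q}!\sum_{k=1}^{\infty}\Bigl(\operatorname{Res}_{t=t_{k}}+\operatorname{Res}_{t=-t_{k}}\Bigr)\frac{E_{q}(-t/2)}{g_{\alpha}^{(2)}(it;q)\,t^{n+1}}.
\]
A direct computation using $\frac{d}{dt}g_{\alpha}^{(2)}(it;q)=i(1-q)\frac{d}{dz}\mathcal{J}_{\alpha}^{(2)}(z;q^{2})$ and the parity identity $\frac{d}{dz}\mathcal{J}_{\alpha}^{(2)}(z;q^{2})\big|_{-j}=-\frac{d}{dz}\mathcal{J}_{\alpha}^{(2)}(z;q^{2})\big|_{j}$ gives each residue in closed form. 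Pairing the residues at $\pm t_{k}$ and applying $E_{q}(\pm iy)=Cos_{q}(y)\pm iSin_{q}(y)$, the combination $i^{n+1}\mp(-i)^{n+1}$ selects the $Cos_{q}$ contribution when $n=2n'$ and the $Sin_{q}$ contribution when $n=2n'+1$. After converting $[n]_{q}!(1-q)^{n}=(q;q)_{n}$, one arrives at the two formulas in \eqref{s1112}.

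The main obstacle is the estimate showing that the contour integral on $|t|=R_{N}$ vanishes in the limit, since $E_{q}$ can grow rapidly off the real axis. Concretely, one has to exhibit radii $R_{N}$ that stay uniformly away from the zeros of $g_{\alpha}^{(2)}$ and on which the ratio $|E_{q}(-t/2)/g_{\alpha}^{(2)}(it;q)|$ grows no faster than a polynomial in $R_{N}$; this follows from the fact that the zeros $j_{k,\alpha}^{(2)}$ grow like $q^{-k}$ together with standard product-type lower bounds for $|\mathcal{J}_{\alpha}^{(2)}|$ between consecutive zeros. Once this decay estimate is in hand, the remainder of the argument is a routine residue calculation that leads directly to the claimed series representations.
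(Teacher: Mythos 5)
Your proposal follows essentially the same route as the paper: extract $\beta_{n,\alpha}(q)$ as a residue of $E_{q}(-z/2)/\bigl(g_{\alpha}^{(2)}(iz;q)z^{n+1}\bigr)$ at the origin, apply the residue theorem on expanding circles chosen between consecutive zeros $\pm i j^{(2)}_{k,\alpha}/(1-q)$, pair the residues to produce the $Cos_{q}$/$Sin_{q}$ terms according to the parity of $n$, and show the contour integrals vanish. The only part you leave schematic—the decay of $|E_{q}(-z/2)/g_{\alpha}^{(2)}(iz;q)|$ on the chosen circles—is exactly what the paper supplies, using the Bergweiler--Hayman asymptotics for $E_{q}$, the Annaby--Mansour asymptotics for $z^{-\alpha}J^{(2)}_{\alpha}(z;q^{2})$ and its zeros, and the minimum modulus principle on zero-free annuli, which yields a bound of the form $R_{m}^{\,2\log 2/\log q+\log\delta/\log q-n}\to 0$; so your outline is correct and coincides with the paper's argument.
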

\begin{proof}{\small
Since
\begin{equation*}
  G(z):=\frac{E_{q}(\frac{-z}{2})}{g^{(2)}_{\alpha}(iz;q)}=\sum_{n=0}^{\infty}\beta_{n,\alpha}(q)\frac{z^{n}}{[n]_{q}!},\,\,\,|z|<\frac{j^{(2)}_{1,\alpha}}{1-q},
\end{equation*}
then
\begin{equation*}
 \frac{\beta_{n,\alpha}(q)}{[n]_{q}!}=\frac{G^{(n)}(0)}{n!},\quad n\in\mathbb{N}_{0}.
\end{equation*}
Now,  we integrate $f(z):=\dfrac{G(z)}{z^{n+1}},\,G(z)=\dfrac{E_{q}(\frac{-z}{2})}{g^{(2)}_{\alpha}(iz;q)}$ on the contour $\Gamma_{m},$ where
$\Gamma_{m}$  is a circle of radius $R_{m},$  $|z_{m}|<\,R_{m}<|z_{m+1}|$. From the Cauchy Residue Theorem, see \cite{cauchy},
\begin{equation*}
\int_{\Gamma_{m}} f(z) \,dz=2\pi i\sum\,Res(f,z_{k} ),
\end{equation*}
where $\{z_{k}\}$ are the poles of $f$ that lie inside $\Gamma_{m}$. The function $f(z)$ has a pole at $z=0$ of order $n+1$ and simple poles at  $\pm z_{k}$ where $z_{k}= i\dfrac{j^{(2)}_{k,\alpha}}{1-q},\,k\in\mathbb{N}$. Consequently,
\begin{equation}\label{s403}
I_{m}=\frac{1}{2\pi i}\int_{\Gamma_{m}} f(z) \,dz=Res(f(z),0)+\sum_{k=1}^{m}Res(f(z),\pm z_{k} ).
\end{equation}
  Since
{\small\begin{equation*}
Res(f,0)=\frac{f^{n}(0)}{n!}=\frac{\beta_{n,\alpha}(q)}{[n]_{q}!},
\end{equation*}}
\begin{equation*}
\begin{split}
Res(f,z_{k} )=\frac{E_{q}(\frac{-z_{k}}{2})}{\frac{d}{dz}g^{(2)}_{\alpha}(iz;q)|_{z=z_{k}}}\frac{1}{(z_{k})^{n+1}}=\frac{E_{q}(\frac{-ij^{(2)}_{k,\alpha}}{2(1-q)})}
{\frac{d}{dz}\mathcal{J}^{(2)}_{\alpha}(z;q^{2})|_{z=j^{(2)}_{k,\alpha}}}\frac{(i)^{-n}(1-q)^{n}}{(j^{(2)}_{k,\alpha})^{n+1}},
 \end{split}
\end{equation*}
and
\begin{equation*}
\begin{split}
Res(f,-z_{k} )=\frac{E_{q}(\frac{z_{k}}{2})}{\frac{d}{dz}g^{(2)}_{\alpha}(iz;q)|_{z=-z_{k}}}\frac{1}{(-z_{k})^{n+1}}=\frac{E_{q}(\frac{ij^{(2)}_{k,\alpha}}{2(1-q)})}
{\frac{d}{dz}\mathcal{J}^{(2)}_{\alpha}(z;q^{2})|_{z=j^{(2)}_{k,\alpha}}}\frac{(-i)^{-n}(1-q)^{n}}{(j^{(2)}_{k,\alpha})^{n+1}}.
 \end{split}
\end{equation*}
Then Equation (\ref{s403}) can be written as
\begin{equation}\label{s12}
\begin{split}
I_{m}&=
\frac{\beta_{n,\alpha}(q)}{[n]_{q}!}+\sum_{k=1}^{m}2Re\,\left((-i)^{-n}E_{q}(\frac{ij^{(2)}_{k,\alpha}}{2(1-q)})\right)\frac{(1-q)^{n}}
{(j^{(2)}_{k,\alpha})^{n+1}\frac{d}{dz}\mathcal{J}^{(2)}_{\alpha}(z;q^{2})|_{z=j^{(2)}_{k,\alpha}}},
 \end{split}
\end{equation}
substituting into (\ref{s12}) with $-i=e^{-\frac{i\pi}{2}}$ gives
\begin{equation*}
\begin{split}
I_{m}&=
\frac{\beta_{n,\alpha}(q)}{[n]_{q}!}+2(1-q)^{n}\cos\frac{n\pi}{2}
\sum_{k=1}^{m}\frac{Cos_{q}(\frac{j^{(2)}_{k,\alpha}}{2(1-q)})}{\frac{d}{dz}\mathcal{J}^{(2)}_{\alpha}(z;q^{2})|_{z=j^{(2)}_{k,\alpha}}}\frac{1}{(j^{(2)}_{k,\alpha})^{n+1}}
\\&-2(1-q)^{n}\sin\frac{n\pi}{2}\sum_{k=1}^{m}\frac{Sin_{q}(\frac{j^{(2)}_{k,\alpha}}{2(1-q)})}{\frac{d}{dz}\mathcal{J}^{(2)}_{\alpha}(z;q^{2})|_{z=j^{(2)}_{k,\alpha}}}\frac{1}{(j^{(2)}_{k,\alpha})^{n+1}}.
 \end{split}
\end{equation*}
\\
Now, we show that the integral $I_{m}\rightarrow 0$  as $m\rightarrow\infty$.  Bergweiller and Hayman \cite{HB} introduced the asymptotic relation for $E_{q}(z)$,
\begin{equation*}
|M(r;E_{q})|:=\sup\{|E_{q}(z)|:|z|=r\}\sim \,e^{\frac{-(\log r)^{2}}{2\log q}},\quad when \quad r=|z|\rightarrow\infty.
\end{equation*}
In \cite{Annaby1}, Annaby and Mansour proved that for $r=|z|\,\rightarrow \,\infty$

\begin{equation*}
 z^{-\nu}J_{\nu}^{(2)}(z;q) \sim \,\exp\left(-\frac{(\log\,r)^{2}}{2\log\,q}-\frac{\log\,2}{\log\,q}\log\,r\right).
\end{equation*}
\vskip 0.2cm
Hayman in \cite{haymen} introduced the higher order asymptotics of $J_{\nu}^{(2)}(z;q)$. Then,  Annaby and Mansour, see \cite{Annaby1},  pointed out that the first order asymptotics  of the zeros of  $J_{\nu}^{(2)}(z;q^{2})$ is given by
\begin{equation*}
  j_{m,\nu}^{(2)}=2q^{-2m}q^{-\nu+1}(1+O(q^{2m})), \quad (m\rightarrow\infty).
\end{equation*}
Hence if $(z_{m})_{m}$ are the positive zero of $g^{(2)}_{\alpha}(iz;q)$,  then
\begin{equation}\label{k6}
  \lim_{m\rightarrow\infty}\frac{z_{m}}{z_{m+1}}=\lim_{m\rightarrow\infty}\frac{j^{(2)}_{m,\nu}}{j^{(2)}_{m+1,\nu}}=q^{2}, \quad \lim_{m\rightarrow\infty}z_{m}=\infty.
\end{equation}
Let $ {\small 0<\epsilon<(q^{-1}-1)}$. There exists $M_{0}\in\mathbb{N}$ such that if $m\in\mathbb{N},\,m\geq M_{0}$, then
\begin{equation*}\label{w2}
  q^{2}(1-\epsilon)<\frac{z_{m}}{z_{m+1}}<q^{2}(1+\epsilon).
\end{equation*}
 Hence $z_{m}<qz_{m+1}$ for all $m\geq M_{0}$.  We can  choose $R_{m},\,\delta:=q^{-1}\displaystyle\sup_{m\geq M_{0}}\frac{z_{m}}{z_{m+1}}$ such that $(z_{m}<\delta R_{m}<q z_{m+1} <R_{m})$. Indeed,
\begin{equation*}
  \delta=q^{-1}\displaystyle \sup_{m\geq M_{0}}\frac{z_{m}}{z_{m+1}}\geq q^{-1}\frac{z_{m}}{z_{m+1}},\,\, m\geq M_{0}.
\end{equation*}
But $q z_{m+1} <R_{m}$ leads to  $\delta>\frac{z_{m}}{R_{m}}$ and so $ z_{m} <\delta R_{m}$. Now,
\begin{equation*}
  \delta=q^{-1}\displaystyle \sup_{ m\geq M_{0}}\frac{z_{m}}{z_{m+1}}\geq q^{-1}\displaystyle\lim_{m\rightarrow\infty}\frac{z_{m}}{z_{m+1}}=q^{-1}q^{2}=q.
\end{equation*}
Also  $\delta =q^{-1}\displaystyle \sup_{ m\geq M_{0}}\frac{z_{m}}{z_{m+1}}<q(1+\epsilon)<1$.  Hence $1>\delta>q$ and so by
\begin{equation}\label{k7}
  z_{m}< R_{m}<\frac{q}{\delta} z_{m+1}<z_{m+1},
\end{equation}
the annulus $\delta R_{m}<|z| <R_{m}$ has no zeros of the function $ g^{(2)}_{\alpha}(iz;q)$. Hence, from the minimum modulus principle we have
\begin{align}\label{s98}
\begin{split}
  \left|g^{(2)}_{\alpha}(iz;q)\right|&\geq c_{1}\,e^{-\frac{(\log \delta R_{m})^{2}}{2\log q}-\frac{\log2}{\log q}\log\delta R_{m}},\quad c_{1}>0.\\
   \left|E_{q}(\frac {-z}{2}) \right|&\leq \,c_{2}\,e^{\frac{-(\log \frac{R_{m}}{2})^{2}}{2\log q}},\quad c_{2}>0.\end{split}
\end{align}
Therefore, from (\ref{s98}), we conclude that

\begin{equation*}
\begin{split}
  \left|\frac{E_{q}(\frac{-z}{2})}{g^{(2)}_{\alpha}(iz;q)} \right|&\leq \,\frac{c_{2}}{c_{1}}\,\,\frac {e^{\frac{-(\log\frac{ R_{m}}{2})^{2}}{2\log q}}}
{e^{-\frac{( \log\delta R_{m})^{2}}{2\log q}-\frac{\log2}{\log q}\log\delta R_{m}}}\\& \leq \,\frac{ c_{2}}{c_{1}}\,e^{\frac{1}{2\log q}\left(((\log\delta R_{m})^{2}-(\log\frac {R_{m}}{2})^{2})\right)+\frac{\log2}{\log q}\log\delta R_{m}} \\&\leq\,\frac{ c_{2}}{c_{1}}\,e^{K}\,e^{\frac{2\log2 \log R_{m}}{\log q}+\frac{\log\delta \log R_{m}}{\log q}},
 \end{split}
\end{equation*}
where
\begin{equation*}
  K=\frac{1}{2\log q}\left((\log\delta)^{2} -(\log2)^{2}+2\log2\log\delta\right).
\end{equation*}
Now, using the ML-inequality (see\cite{cauchy}) to obtain
\begin{equation}\label{s43}
\begin{split}
|I_{m}|&=|\int_{\Gamma_{m}}f(z)dz|\leq(2\pi R_{m})|M(r;f(z))|\\&\leq\,\frac{2\pi R_{m}c_{2}}{c_{1}}\,e^{K}\,e^{\frac{2\log2 \log R_{m}}{\log q}+\frac{\log\delta \log R_{m}}{\log q}}\frac{1}{R_{m}^{n+1}}\\&\leq\,\frac{ 2\pi c_{2}}{c_{1}}\,e^{K}\,R_{m}^{\frac{2 \log2}{\log q}}R_{m}^{\frac{\log\delta }{\log q}-n}.
\end{split}
\end{equation}
 From (\ref{k6}) and (\ref{k7}), we have $\displaystyle\lim_{m\rightarrow\infty}R_{m}=0$. Also, since $0<q<1$  and  $1>\delta>q$ then
\begin{equation*}
  R_{m}^{\frac{2 \log2}{\log q}}\rightarrow 0\,\,\,\, and \,\,\,\,R_{m}^{\frac{\log\delta }{\log q}-n}\rightarrow 0 \quad as\,\, m\,\,\rightarrow\,\infty.
\end{equation*}

Hence $\displaystyle\lim_{m\rightarrow\infty}I_{m}=0.$   Consequently,
\begin{equation*}
\begin{split}
\frac{\beta_{n,\alpha}(q)}{[n]_{q}!}&=-2(1-q)^{n}\cos\frac{n\pi}{2}
\sum_{k=1}^{\infty}\frac{Cos_{q}(\frac{j^{(2)}_{k,\alpha}}{2(1-q)})}{\frac{d}{dz}\mathcal{J}^{(2)}_{\alpha}(z;q^{2})|_{z=j^{(2)}_{k,\alpha}}}\frac{1}{(j^{(2)}_{k,\alpha})^{n+1}}
\\&+2(1-q)^{n}\sin\frac{n\pi}{2}\sum_{k=1}^{\infty}\frac{Sin_{q}(\frac{j^{(2)}_{k,\alpha}}{2(1-q)})}{\frac{d}{dz}\mathcal{J}^{(2)}_{\alpha}(z;q)|_{z=j^{(2)}_{k,\alpha}}}\frac{1}{(j^{(2)}_{k,\alpha})^{n+1}}.
 \end{split}
\end{equation*}
Therefore,
\begin{align*}
\begin{split}
\beta_{2n,\alpha}(q)&=2(-1)^{n+1}(q;q)_{2n}
\sum_{k=1}^{\infty}\frac{Cos_{q}(\frac{j^{(2)}_{k,\alpha}}{2(1-q)})}{(j^{(2)}_{k,\alpha})^{2n+1}\,
\frac{d}{dz}\mathcal{J}_{\alpha}^{(2)}(z;q^{2})|_{z=j^{(2)}_{k,\alpha}}},\\
\beta_{2n+1,\alpha}(q)&=2(-1)^{n}(q;q)_{2n+1}
\sum_{k=1}^{\infty}\frac{Sin_{q}(\frac{j^{(2)}_{k,\alpha}}{2(1-q)})}
{(j^{(2)}_{k,\alpha})^{2n+2}\,\frac{d}{dz}\mathcal{J}_{\alpha}^{(2)}(z;q^{2})|_{z=j^{(2)}_{k,\alpha}}},
\end{split}
\end{align*}
which completes the proof of the theorem.
}
\end{proof}
\begin{remark}{\small
If we substitute with $\alpha=\frac{1}{2}$ in the second equation in (\ref{s1112}), then $(z_{k})_{k}$ will be the positive zeros of $Sin_{q}(z)$ and consequently, the series in the left hand side vanishes which coincide with the known result that the odd Bernoulli numbers vanish $(\beta_{2n+1}(q)=0,\,n\geq\, 1)$ (see \cite{IZ}). Similarly, if we set $\alpha=-\frac{1}{2}$ in the first equation in (\ref{s1112}), the series in the left hand side vanishes  and this coincide with the fact that the even Euler$^{,}$s numbers are zero  $(E_{2n}(q)=0,\,n\geq\,1)$ (see \cite{IZ}).}
\end{remark}
\begin{corollary}{\small
The asymptotic relations of the generalized $q$-Bernoulli numbers $(\beta_{n,\alpha}(q))_{n},$
\begin{align*}
\begin{split}
\beta_{2n,\alpha}(q)&=2(-1)^{n+1}(q;q)_{2n}
\frac{Cos_{q}(\frac{j^{(2)}_{1,\alpha}}{2(1-q)})}{(j^{(2)}_{1,\alpha})^{2n+1}\,
\frac{d}{dz}\mathcal{J}_{\alpha}^{(2)}(z;q^{2})|_{z=j^{(2)}_{1,\alpha}}}\left(1+o(1)\right),\\
\beta_{2n+1,\alpha}(q)&=2(-1)^{n}(q;q)_{2n+1}
\frac{Sin_{q}(\frac{j^{(2)}_{1,\alpha}}{2(1-q)})}
{(j^{(2)}_{1,\alpha})^{2n+2}\,\frac{d}{dz}\mathcal{J}_{\alpha}^{(2)}(z;q^{2})|_{z=j^{(2)}_{1,\alpha}}}\left(1+o(1)\right),
\end{split}
\end{align*}
where $\mathcal{J}_{\alpha}^{(2)} (z;q)$ is defined in (\ref{g7907}).
}
\end{corollary}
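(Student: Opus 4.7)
The plan is to deduce the corollary directly from the exact series representation of $\beta_{2n,\alpha}(q)$ and $\beta_{2n+1,\alpha}(q)$ given in Theorem~\ref{q9700}, by isolating the $k=1$ term and showing that the remaining tail is $o(1)$ relative to the leading term as $n\to\infty$. Set
\begin{equation*}
T_k^{(n)}:=\frac{Cos_q(j^{(2)}_{k,\alpha}/(2(1-q)))}{(j^{(2)}_{k,\alpha})^{2n+1}\,\frac{d}{dz}\mathcal{J}^{(2)}_{\alpha}(z;q^2)|_{z=j^{(2)}_{k,\alpha}}},
\end{equation*}
so that $\beta_{2n,\alpha}(q)=2(-1)^{n+1}(q;q)_{2n}\sum_{k\ge 1}T_k^{(n)}$. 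The claim is equivalent to $\sum_{k\ge 2}T_k^{(n)}/T_1^{(n)}\to 0$ as $n\to\infty$, and the odd-index case is handled identically with $Sin_q$ in place of $Cos_q$.

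The key observation is the exact factorization
\begin{equation*}
\frac{T_k^{(n)}}{T_1^{(n)}}=\left(\frac{j^{(2)}_{1,\alpha}}{j^{(2)}_{k,\alpha}}\right)^{\!2(n-n_0)}\cdot\frac{T_k^{(n_0)}}{T_1^{(n_0)}},
\end{equation*}
valid for any fixed $n_0\in\mathbb{N}$, because the only $n$-dependence in $T_k^{(n)}$ enters through the factor $(j^{(2)}_{k,\alpha})^{-(2n+1)}$. Since the positive zeros $(j^{(2)}_{k,\alpha})_k$ are strictly increasing, $\lambda:=j^{(2)}_{1,\alpha}/j^{(2)}_{2,\alpha}<1$ and $j^{(2)}_{1,\alpha}/j^{(2)}_{k,\alpha}\le\lambda$ for all $k\ge 2$. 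Hence
\begin{equation*}
\sum_{k\ge 2}\left|\frac{T_k^{(n)}}{T_1^{(n)}}\right|\le\lambda^{2(n-n_0)}\sum_{k\ge 2}\left|\frac{T_k^{(n_0)}}{T_1^{(n_0)}}\right|,
\end{equation*}
so that once the right-hand sum is finite for some $n_0$, the prefactor $\lambda^{2(n-n_0)}$ forces the full tail to be $o(1)$.

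The main obstacle is therefore to verify absolute convergence of $\sum_{k\ge 2}|T_k^{(n_0)}|$ for large enough $n_0$. This can be extracted from the same ingredients used in the proof of Theorem~\ref{q9700}: the Bergweiler--Hayman bound $\sup_{|z|=r}|E_q(z)|\sim\exp(-(\log r)^2/(2\log q))$ controls $|Cos_q(j^{(2)}_{k,\alpha}/(2(1-q)))|$ from above, Hayman's higher-order asymptotics for $J^{(2)}_\alpha(z;q^2)$ lower-bound $|\frac{d}{dz}\mathcal{J}^{(2)}_\alpha(z;q^2)|_{z=j^{(2)}_{k,\alpha}}|$, and the first-order zero asymptotic $j^{(2)}_{k,\alpha}\sim 2q^{-2k-\alpha+1}$ recalled in that proof governs the power $(j^{(2)}_{k,\alpha})^{-(2n_0+1)}$. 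Combining them produces an estimate of the form $|T_k^{(n_0)}|\le C\exp(ck^2)\,q^{(2k)(2n_0+1)}$ for some $c=c(q,\alpha)$, which is summable as soon as $n_0$ is taken large enough that the geometric factor $q^{4n_0 k}$ dominates $\exp(ck^2)$. This closes the argument.
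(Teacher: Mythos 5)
Your overall strategy is the right one, and it is essentially what the paper intends (its proof of the corollary is just the observation that it ``follows directly'' from Theorem \ref{q9700}): isolate the $k=1$ term, note that the only $n$-dependence of the $k$-th term is through the factor $(j^{(2)}_{k,\alpha})^{-(2n+1)}$, and use $j^{(2)}_{1,\alpha}/j^{(2)}_{k,\alpha}\le j^{(2)}_{1,\alpha}/j^{(2)}_{2,\alpha}<1$ to force the tail to be negligible. The exact factorization $T_k^{(n)}/T_1^{(n)}=(j^{(2)}_{1,\alpha}/j^{(2)}_{k,\alpha})^{2(n-n_0)}\,T_k^{(n_0)}/T_1^{(n_0)}$ is correct and is the core of the argument.

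However, the step where you establish summability of $\sum_{k\ge 2}|T_k^{(n_0)}|$ contains a genuine error. Your claimed bound $|T_k^{(n_0)}|\le C\exp(ck^2)\,q^{2k(2n_0+1)}$ with $c=c(q,\alpha)>0$ is \emph{not} summable for any fixed $n_0$: since $ck^2-4n_0k\log(1/q)\to\infty$, the Gaussian-growth factor $\exp(ck^2)$ eventually dominates every geometric factor $q^{4n_0k}$, so ``taking $n_0$ large enough'' cannot rescue the estimate. The $\exp(ck^2)$ comes from bounding $|Cos_q(j^{(2)}_{k,\alpha}/(2(1-q)))|$ via the Bergweiler--Hayman maximum-modulus asymptotic (with $j^{(2)}_{k,\alpha}\sim 2q^{-2k-\alpha+1}$ this is of size $\exp\bigl(2k^2\log(1/q)+O(k)\bigr)$), and it must be cancelled by a matching \emph{lower} bound on $\bigl|\frac{d}{dz}\mathcal{J}^{(2)}_{\alpha}(z;q^2)|_{z=j^{(2)}_{k,\alpha}}\bigr|$, which also grows like $\exp\bigl(2k^2\log(1/q)+O(k)\bigr)$ (this follows, e.g., from the Hadamard product of the order-zero entire function $\mathcal{J}^{(2)}_{\alpha}(\cdot\,;q^2)$ over its zeros together with the zero asymptotics); you invoke such a lower bound only vaguely and it does not appear in your final estimate. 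After this cancellation one gets $|T_k^{(n_0)}|\le Ce^{O(k)}q^{4n_0k}$, which is indeed summable for $n_0$ large. Alternatively, you can bypass absolute convergence entirely: convergence of the series in Theorem \ref{q9700} at a fixed index $n_0$ already makes the terms $T_k^{(n_0)}$ bounded, and then $\sum_{k\ge 2}|T_k^{(n)}|\le M\sum_{k\ge2}(j^{(2)}_{k,\alpha})^{-2(n-n_0)}\le CM\,(j^{(2)}_{2,\alpha})^{-2(n-n_0)}$, which compared with $|T_1^{(n)}|$ gives the required $o(1)$. Finally, note (as the paper also silently does) that writing $(1+o(1))$ presupposes $Cos_q\bigl(j^{(2)}_{1,\alpha}/(2(1-q))\bigr)\neq 0$, resp.\ $Sin_q\bigl(j^{(2)}_{1,\alpha}/(2(1-q))\bigr)\neq 0$; in the degenerate cases (cf.\ the Remark after Theorem \ref{q9700}) the statement must be interpreted accordingly.
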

\begin{proof}{\small
The proof follows directly from Theorem \ref{q9700}.}
\end{proof}

\section{ Applications of the generalized $q$-Bernoulli polynomials} \label{Applications}
In this section, we introduce connection relations between the generalized $q$-Bernoulli polynomials ${\small B^{(k)}_{n,\alpha}(x;q)\,(k=1,2,3)}$ and the $q$-Laguerre  and  the little $q$-Legendre polynomials.\\
\\
{\small
The $q$-Laguerre polynomials $ L_{n}^{\alpha}(x;q)$ of degree $n$  are defined by
\begin{equation}\label{L1}
\begin{split}
 L_{n}^{\alpha}(x;q):&=\frac{1}{(q;q)_{n}}\,_{2}\varphi_{1} \left(
                                                                               \begin{array}{c}
                                                                                 q^{-n},-x \\
                                                                                 0 \\
                                                                               \end{array}
                                                                           q;q^{n+\alpha+1}  \right)
                                                                           \\&=\frac{(q^{\alpha+1};q)_{n}}{(q;q)_{n}}\sum_{k=0}^{n}
                                                                           \frac{(q^{-n};q)_{k}}{(q^{\alpha+1};q)_{k}}(-1)^{k}(q^{n+\alpha+1})^{k}x^{k}.
                                                                           \end{split}
\end{equation}}
{\small The Rodrigues formula is given by
\begin{equation}\label{L2}
\begin{split}
  L_{n}^{\alpha}(x;q)=\frac{(1-q)^{n}}{(q;q)_{n}}(-x;q)_{\infty}\,x^{-\alpha}\,D_{q}^{n}\left(\frac{x^{\alpha+n}}{(-x;q)_{\infty}}\right),
 \end{split}
\end{equation}}
{\small and the orthogonality relation is
\begin{equation}\label{L3}
  \int_{0}^{\infty}\frac{x^{\alpha}}{(-x;q)_{\infty}}L_{m}^{\alpha}(x;q)L_{n}^{\alpha}(x;q)dx
  =\frac{(q^{-\alpha};q)_{\infty}}{(q;q)_{\infty}}\frac{(q^{\alpha+1};q)_{n}}{(q;q)_{n}q^{n}}\Gamma_{q}(-\alpha)\Gamma_{q}(\alpha+1)\delta_{mn},\quad \alpha>-1,
\end{equation}
 where $\delta_{mn}$ is the Kronecker delta function, see \cite{Asky, ismail}}. {\small The $q$-Laguerre polynomials $ L_{n}^{\alpha}(x;q)$ satisfy three term recurrence relation}
{\small\begin{equation*}
\begin{split}
 -xa_{n}L_{n}^{\alpha}(x;q)=L_{n+1}^{\alpha}(x;q)-b_{n}L_{n}^{\alpha}(x;q)+d_{n}L_{n-1}^{\alpha}(x;q), \end{split}
\end{equation*}}
where
\begin{equation*}
  a_{n}=\frac{q^{2n+\alpha+1}}{1-q^{n+1}},\quad b_{n}=1+\frac{q(1-q^{n+\alpha}}{1-q^{n+1}},\quad d_{n}=\frac{q(1-q^{n+\alpha})}{1-q^{n+1}}.
\end{equation*}
{\small In the following, let $\alpha>-1$ and $\mathbb{P}_{n}=\{p(x):\text {deg}\, p(x)\leq n\}$ with the inner product
\begin{equation*}
  \langle p(x),g(x)\rangle= \int_{0}^{\infty}\frac{x^{\alpha}}{(-x;q)_{\infty}}p(x)g(x)dx,
\end{equation*}
where $p(x),\,g(x)\in\mathbb{P}_{n}$. From (\ref{L3}), we note that  $\{L_{0}^{\alpha}(x;q),L^{\alpha}_{1}(x;q),\ldots L^{\alpha}_{n}(x;q)\}$
 is an orthogonal basis for $\mathbb{P}_{n}$.}
\begin{theorem}\label{qq}{\small
Let $p(x)$ $\in\mathbb{P}_{n}.$  Then $p(x)$ can be expanded as
\begin{equation*}
p(x)=\sum_{m=0}^{n}C_{m}L_{m}^{\alpha}(x;q),
\end{equation*}
where
\begin{equation*}
  C_{m}=\frac{q^{m}(1-q)^{m-1}(q^{\alpha+m+1};q)_{\infty}}{(q;q)_{\infty}}\int_{0}^{\infty}
 \,D_{q}^{m}\left(\frac{x^{\alpha+m}}{(-x;q)_{\infty} }\right)p(x) dx.
\end{equation*}}
\end{theorem}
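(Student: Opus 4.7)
The plan is to recognize this as a standard orthogonal expansion computation, made explicit by substituting the Rodrigues formula \eqref{L2} into the Fourier coefficient integral.

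First, since $\{L_0^\alpha(x;q),\ldots,L_n^\alpha(x;q)\}$ is an orthogonal basis for $\mathbb{P}_n$ with respect to the given inner product (by the orthogonality relation \eqref{L3} together with the degree count $\deg L_m^\alpha=m$), any $p(x)\in\mathbb{P}_n$ admits a unique expansion
\begin{equation*}
p(x)=\sum_{m=0}^{n} C_m L_m^\alpha(x;q),\qquad C_m=\frac{\langle p(x),L_m^\alpha(x;q)\rangle}{\langle L_m^\alpha(x;q),L_m^\alpha(x;q)\rangle}.
\end{equation*}
So the task reduces to evaluating the numerator using the Rodrigues representation \eqref{L2} and the denominator using \eqref{L3}, then simplifying.

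Next, I substitute \eqref{L2} into the numerator. The factor $(-x;q)_\infty$ from the Rodrigues formula cancels the $(-x;q)_\infty^{-1}$ from the weight, and the factor $x^{-\alpha}$ cancels the $x^\alpha$ from the weight, yielding the clean expression
\begin{equation*}
\langle p(x),L_m^\alpha(x;q)\rangle=\frac{(1-q)^m}{(q;q)_m}\int_0^\infty p(x)\,D_q^m\!\left(\frac{x^{\alpha+m}}{(-x;q)_\infty}\right)dx.
\end{equation*}
No $q$-integration by parts is needed; the $D_q^m$ stays in place, which is exactly what the stated formula for $C_m$ asks for. From \eqref{L3}, the denominator is
\begin{equation*}
\frac{(q^{-\alpha};q)_\infty}{(q;q)_\infty}\cdot\frac{(q^{\alpha+1};q)_m}{(q;q)_m q^m}\,\Gamma_q(-\alpha)\Gamma_q(\alpha+1).
\end{equation*}

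Finally, I combine the two pieces and simplify the constant. Using the standard $q$-gamma identity $\Gamma_q(x)=(q;q)_\infty(1-q)^{1-x}/(q^x;q)_\infty$, one gets
\begin{equation*}
\Gamma_q(-\alpha)\Gamma_q(\alpha+1)=\frac{(1-q)(q;q)_\infty^2}{(q^{-\alpha};q)_\infty(q^{\alpha+1};q)_\infty},
\end{equation*}
so the prefactor collapses, and using the telescoping identity $(q^{\alpha+1};q)_\infty/(q^{\alpha+1};q)_m=(q^{\alpha+m+1};q)_\infty$ gives precisely
\begin{equation*}
C_m=\frac{q^m(1-q)^{m-1}(q^{\alpha+m+1};q)_\infty}{(q;q)_\infty}\int_0^\infty D_q^m\!\left(\frac{x^{\alpha+m}}{(-x;q)_\infty}\right)p(x)\,dx.
\end{equation*}
The main obstacle, such as it is, is simply bookkeeping: lining up the three $q$-shifted factorial / $q$-gamma reductions in the correct order so that the numerator constants and denominator constants all match. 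There is no analytic subtlety beyond the already-assumed convergence of the orthogonality integral \eqref{L3} for $\alpha>-1$.
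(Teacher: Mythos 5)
Your proposal is correct and follows essentially the same route as the paper: expand $p(x)$ in the orthogonal basis $\{L_m^{\alpha}(x;q)\}_{m=0}^{n}$, compute $C_m$ from the orthogonality relation (\ref{L3}), substitute the Rodrigues formula (\ref{L2}) so the weight cancels, and simplify the constant via the $q$-gamma and $(q^{\alpha+1};q)_\infty=(q^{\alpha+1};q)_m(q^{\alpha+m+1};q)_\infty$ identities. The only difference is bookkeeping order (the paper absorbs $\Gamma_q(-\alpha)(q^{-\alpha};q)_\infty/(q;q)_\infty=(1-q)^{1+\alpha}$ into the norm first), which is immaterial.
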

\begin{proof}{\small
Since
\begin{equation*}
p(x)=\sum_{m=0}^{n}C_{m}L_{m}^{\alpha}(x;q),
\end{equation*}
in order to calculate the constant $C_{m}$, we use (\ref{L3}) to obtain
\begin{equation*}
  \left\langle p(x),L_{k}^{\alpha}(x;q)\right\rangle =\langle \sum_{m=0}^{n}C_{m}L_{m}^{\alpha}(x;q),L_{k}^{\alpha}(x;q)\rangle= \sum_{m=0}^{n}C_{m}\langle L_{m}^{\alpha}(x;q),L_{k}^{\alpha}(x;q)\rangle.
\end{equation*}
Then
\begin{equation*}
\begin{split}
\langle p(x),L_{m}^{\alpha}(x;q)\rangle=C_{m}\langle L_{m}^{\alpha}(x;q) ,L_{m}^{\alpha}(x;q)\rangle
=C_{m}\frac{(q^{\alpha+1};q)_{m}}{q^{m}(q;q)_{m}}(1-q)^{1+\alpha}\Gamma_{q}(\alpha+1).
\end{split}
\end{equation*}

Therefore,
\begin{equation}\label{L565}
\begin{split}
  C_{m}= \frac{q^{m}(q;q)_{m}}{(q^{\alpha+1};q)_{m}(1-q)^{1+\alpha}\Gamma_{q}(\alpha+1)}\int_{0}^{\infty}\frac{x^{\alpha}}{(-x;q)_{\infty}}L_{m}^{\alpha}(x;q) p(x) dx.
  \end{split}
\end{equation}
Using (\ref{L2}) with $n$ replaced by $m$, we obtain
\begin{equation*}
\begin{split}
  C_{m}=\frac{q^{m}(1-q)^{m-1}(q^{\alpha+m+1};q)_{\infty}}{(q;q)_{\infty}}\int_{0}^{\infty}
 \,D_{q}^{m}\left(\frac{x^{\alpha+m}}{(-x;q)_{\infty} }\right)p(x) dx,
  \end{split}
\end{equation*}
and the theorem follows.}
\end{proof}
{\small The following Lemma, see  \cite{Jase}, is essential in the proof of Theorem \ref{mm}.
\begin{lemma}\label{tt}
Let the functions $f$ and $g$ be defined and continuous on $[0,\infty]$. Assume that the improper Riemann integrals of the functions $f(x)g(x)$ and $f(x/q)g(x)$ exist on $[0,\infty]$. Then
\begin{equation*}
\begin{split}
  \int_{0}^{\infty}f(x)D_{q}g(x)dx &=\frac{f(0)g(0)}{1-q}\ln q-\frac{1}{q}\int_{0}^{\infty}g(x)D_{q^{-1}}f(x)dx\\&=\frac{f(0)g(0)}{1-q}\ln q-\int_{0}^{\infty}g(qx)D_{q}f(x)dx.
  \end{split}
\end{equation*}
\end{lemma}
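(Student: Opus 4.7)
The plan is to treat $\int_0^{\infty}f(x)D_q g(x)\,dx$ as the limit of $\int_{\epsilon}^{A}f(x)D_q g(x)\,dx$ as $\epsilon\to 0^+$ and $A\to\infty$, apply the substitution $u=qx$ to the shifted term in the $q$-difference quotient, and then carefully track the boundary pieces produced by the mismatch in limits of integration.

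Using $D_q g(x)=\dfrac{g(qx)-g(x)}{(q-1)x}$, I first write
\begin{equation*}
\int_{\epsilon}^{A}f(x)D_q g(x)\,dx=\frac{1}{q-1}\left[\int_{\epsilon}^{A}\frac{f(x)g(qx)}{x}\,dx-\int_{\epsilon}^{A}\frac{f(x)g(x)}{x}\,dx\right].
\end{equation*}
The change of variables $u=qx$ turns the first integral into $\int_{q\epsilon}^{qA}\frac{f(u/q)g(u)}{u}\,du$. Splitting both intervals at $\epsilon$ and $qA$ yields
\begin{equation*}
\int_{\epsilon}^{A}f(x)D_q g(x)\,dx=\frac{1}{q-1}\left[\int_{q\epsilon}^{\epsilon}\frac{f(u/q)g(u)}{u}\,du+\int_{\epsilon}^{qA}\frac{g(u)(f(u/q)-f(u))}{u}\,du-\int_{qA}^{A}\frac{f(x)g(x)}{x}\,dx\right].
\end{equation*}

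Next I would evaluate the two boundary contributions. By continuity of $f$ and $g$ at $0$, the integrand $f(u/q)g(u)$ converges uniformly to $f(0)g(0)$ on $[q\epsilon,\epsilon]$, so the first bracketed piece tends to $f(0)g(0)\int_{q\epsilon}^{\epsilon}du/u=-f(0)g(0)\ln q$ as $\epsilon\to 0^+$; dividing by $q-1$ yields the claimed constant $f(0)g(0)\ln q/(1-q)$. The last bracketed piece tends to $0$ as $A\to\infty$: setting $F(x)=\int_0^x f(t)g(t)\,dt$ and integrating by parts gives $\int_{qA}^{A}f(x)g(x)/x\,dx=F(A)/A-F(qA)/(qA)+\int_{qA}^{A}F(x)/x^2\,dx$, all three of which vanish in the limit since $F$ is bounded by hypothesis. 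Finally, the surviving middle piece, after invoking the identity $\dfrac{f(u/q)-f(u)}{u}=\dfrac{1-q}{q}D_{q^{-1}}f(u)$ (immediate from the definition), becomes $-\dfrac{1}{q}\int_{0}^{\infty}g(u)D_{q^{-1}}f(u)\,du$, establishing the first equality.

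For the second equality, I would substitute $x=qy$ in $\int_{0}^{\infty}g(x)D_{q^{-1}}f(x)\,dx$ and use the elementary identity $D_{q^{-1}}f(qy)=D_q f(y)$ (a direct computation from the two $q$-difference definitions) to rewrite the integral as $q\int_{0}^{\infty}g(qy)D_q f(y)\,dy$; multiplying by $-1/q$ delivers the second stated formula, with the hypothesis on $\int_{0}^{\infty}f(x/q)g(x)\,dx$ ensuring convergence of the substituted integral. The main obstacle is the boundary piece at infinity: convergence of $\int_{0}^{\infty}f(x)g(x)\,dx$ does not automatically give $\int_{qA}^{A}f(x)g(x)/x\,dx\to 0$, and the cleanest route is the integration-by-parts argument above, which converts convergence of the primitive $F$ into the required decay; once this is in hand, everything else is bookkeeping.
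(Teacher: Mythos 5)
The paper itself offers no proof of this lemma: it is quoted from \cite{Jase} as a known $q$-integration-by-parts formula, so there is no internal argument to compare against. Your derivation is correct and is the natural one: truncating to $[\epsilon,A]$, substituting $u=qx$, and splitting at $\epsilon$ and $qA$ produces exactly your three pieces; the piece over $[q\epsilon,\epsilon]$ tends to $-f(0)g(0)\ln q$ by continuity at $0$ together with $\int_{q\epsilon}^{\epsilon}u^{-1}du=-\ln q$, which after division by $q-1$ is the stated boundary constant; the piece over $[qA,A]$ tends to $0$ by your integration by parts against $F(x)=\int_0^x f(t)g(t)\,dt$, and this is the right device because it covers conditional convergence of $\int_0^\infty fg\,dx$; and the identities $\bigl(f(u/q)-f(u)\bigr)/u=\tfrac{1-q}{q}D_{q^{-1}}f(u)$ and $D_{q^{-1}}f(qy)=D_qf(y)$ give both stated forms of the right-hand side. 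Two refinements are worth noting. First, the hypothesis that $\int_0^\infty f(x/q)g(x)\,dx$ exists is not what licenses the change of variables in your last step (a linear substitution in a convergent improper integral needs nothing extra); its real role is to control the tails at infinity, e.g. applying your primitive trick to $G(x)=\int_0^x f(t/q)g(t)\,dt$ shows that $\int^{\infty}g(u)\bigl(f(u/q)-f(u)\bigr)u^{-1}\,du$ converges, so the integrals appearing on both sides actually converge at the upper endpoint. Second, at the endpoint $0$ the stated hypotheses do not by themselves force either side to converge; what your finite-range identity really proves is that the two sides exist or fail together and coincide whenever they exist, which is the sense in which the lemma is applied in Theorem \ref{mm}, where all integrands are absolutely integrable. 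Neither point undermines your argument; they only make explicit where each hypothesis enters.
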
}
\begin{theorem}\label{mm}{\small
If $n\in\mathbb{N}$ and $x\in\mathbb{C},$ then
\begin{align}\label{L1006}
  B^{(1)}_{n,\alpha}(x;q)\nonumber&=\sum_{m=0}^{n}A_{m}\left(\sum_{k=m}^{n}q^{\frac{k(2n-k+1)}{2}}\frac{ (q^{-n};q)_{k}  (q^{-k};q)_{m}(q^{-\alpha-k};q)_{k}}{(q;q)_{k}}
 \beta_{n-k,\alpha} (q)\right)L_{m} ^{\alpha}(x;q),\\
   B^{(2)}_{n,\alpha}(x;q)\nonumber&=\sum_{m=0}^{n}A_{m}\left(\sum_{k=m}^{n}\frac{q^{nk}(q^{-n};q)_{k}  (q^{-k};q)_{m}(q^{-\alpha-k};q)_{k}}{(q;q)_{k}}
 \beta_{n-k,\alpha} (q)\right)L_{m} ^{\alpha}(x;q),\\
 B^{(3)}_{n,\alpha}(x;q)\nonumber&=\sum_{m=0}^{n}A_{m}\left(\sum_{k=m}^{n}q^{\frac{k(4n-k+1)}{4}}\frac{ (q^{-n};q)_{k}  (q^{-k};q)_{m}(q^{-\alpha-k};q)_{k}}{(q;q)_{k}}
 \beta^{(3)}_{n-k,\alpha} (q)\right)L_{m} ^{\alpha}(x;q),
\end{align}
where
\begin{equation*}
  A_{m}=\frac{-q^{m}(q^{\alpha+m+1},q^{-\alpha};q)_{\infty}}{(1-q)^{2}(q,q;q)_{\infty}}\frac{\pi}{sin(\alpha\pi)}.
\end{equation*}}
\end{theorem}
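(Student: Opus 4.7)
The plan is to apply Theorem~\ref{qq} with $p(x)=B^{(k)}_{n,\alpha}(x;q)$ and to compute the connection coefficients $C_m$ directly from the orthogonality form (\ref{L565}). The three cases $k=1,2,3$ are strictly parallel: they differ only by the auxiliary $q$-power factors $1$, $q^{k(k-1)/2}$, and $q^{k(k-1)/4}$ appearing in the explicit expansions (\ref{q19}), (\ref{q18}), and (\ref{q180}) of $B^{(k)}_{n,\alpha}(x;q)$ in powers of $x$. I would therefore give the details for $k=1$ and let $k=2,3$ follow by substituting (\ref{q18}) or (\ref{q180}) into the same calculation.

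For $k=1$, substituting (\ref{q19}) into (\ref{L565}) gives
\begin{equation*}
C_m=\frac{q^m(q;q)_m}{(q^{\alpha+1};q)_m(1-q)^{1+\alpha}\Gamma_q(\alpha+1)}\sum_{r=0}^{n}\left[\begin{array}{c}n\\r\end{array}\right]_q\beta_{n-r,\alpha}(q)\int_0^{\infty}\frac{x^{\alpha+r}\,L_m^{\alpha}(x;q)}{(-x;q)_\infty}\,dx.
\end{equation*}
Inserting the explicit series (\ref{L1}) for $L_m^{\alpha}(x;q)$ and interchanging summations reduces the problem to the evaluation of the moment integrals $\int_0^{\infty}x^{\alpha+s}/(-x;q)_\infty\,dx$ for non-negative integer $s$. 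Each such integral is a Ramanujan--Askey type $q$-beta integral whose closed form involves the reflection factor $\pi/\sin(\alpha\pi)$ together with the $q$-shifted factorials $(q^{\alpha+s+1};q)_\infty$ and $(q^{-\alpha-s};q)_\infty$; combining this evaluation with the reflection identity for $\Gamma_q$ produces exactly the overall prefactor $A_m$ displayed in the statement.

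What remains is a $q$-algebraic bookkeeping step. Using the standard Pochhammer identities
\begin{equation*}
(q^{-\alpha-k};q)_k=(-1)^k q^{-\binom{k+1}{2}-\alpha k}(q^{\alpha+1};q)_k,\qquad (q^{-n};q)_k=(-1)^k q^{\binom{k}{2}-nk}\frac{(q;q)_n}{(q;q)_{n-k}},
\end{equation*}
in conjunction with the reduction formula (\ref{f798}), the sum over $r$ collapses into the factor $(q^{-k};q)_m$, and the accumulated powers of $q$ consolidate into $q^{k(2n-k+1)/2}$. This yields the first identity of (\ref{L1006}). The cases $k=2,3$ are then identical verbatim, except that the extra weights $q^{k(k-1)/2}$ (from (\ref{q18})) or $q^{k(k-1)/4}$ (from (\ref{q180})) merge with the common factor $q^{k(2n-k+1)/2}$ to give the exponents $q^{nk}$ and $q^{k(4n-k+1)/4}$ recorded in (\ref{L1006}), with $\beta_{n-k,\alpha}(q)$ replaced by $\beta^{(3)}_{n-k,\alpha}(q)$ in the case $k=3$.

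I expect the main obstacle to be not conceptual but purely computational: one must track several parallel sources of $q$-powers --- arising from the $q$-Bernoulli expansion, the $q$-Laguerre normalization, the moment evaluation, and the gamma reflection formula --- and verify that they telescope to the compact closed form in (\ref{L1006}). Beyond Theorem~\ref{qq}, the explicit series (\ref{L1}), and a standard $q$-beta moment integral, no additional tools are needed.
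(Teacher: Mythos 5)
Your overall skeleton coincides with the paper's: both start from Theorem~\ref{qq}, substitute $p(x)=B^{(1)}_{n,\alpha}(x;q)$ into (\ref{L565}), expand $B^{(1)}_{n,\alpha}$ by (\ref{q19}), and reduce everything to moments $\int_0^\infty x^{\alpha+s}(-x;q)_\infty^{-1}\,dx$ evaluated through the $1/\Gamma_q$ integral and the reflection factor $\pi/\sin(\alpha\pi)$, with $k=2,3$ treated as parallel. Where you genuinely diverge is in how $\int_0^\infty x^{\alpha+r}(-x;q)_\infty^{-1}L_m^{\alpha}(x;q)\,dx$ is handled: the paper uses the Rodrigues formula (\ref{L2}) and $m$-fold $q$-integration by parts (Lemma~\ref{tt}), which transfers $D_q^m$ onto $x^r$ and leaves a single moment, whereas you expand $L_m^{\alpha}$ via (\ref{L1}) and evaluate every resulting moment. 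Your route is workable, but the step you dismiss as bookkeeping is the crux and is misstated: it is not the sum over $r$ (the Bernoulli-expansion index) that collapses --- that sum survives as the inner sum over $k$ in (\ref{L1006}) --- but the sum over the Laguerre-series index; and its collapse does not follow from the two Pochhammer identities you quote together with (\ref{f798}). After inserting the moment values, that inner sum is a terminating series that must be summed by the $q$-Chu--Vandermonde theorem, e.g. ${}_{2}\phi_{1}(q^{-m},q^{\alpha+r+1};q^{\alpha+1};q,q^{m-r})=(q^{-r};q)_m/(q^{\alpha+1};q)_m$, and it is this evaluation that produces the factor $(q^{-k};q)_m$. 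With that summation supplied your argument closes, and it even yields automatically the vanishing of the terms with $r<m$ (since $(q^{-r};q)_m=0$ there), which the paper instead obtains from orthogonality before invoking Rodrigues. The trade-off: the paper's route needs no summation theorem but must verify that all boundary terms in the repeated $q$-integration by parts vanish; yours avoids Rodrigues and integration by parts entirely but stands or falls with the $q$-Chu--Vandermonde step, which should be cited explicitly rather than absorbed into ``$q$-algebraic bookkeeping''.
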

\begin{proof}{\small
We prove the identity  for $B^{(1)}_{n,\alpha}(x;q)$ and the proofs for $B^{(k)}_{n,\alpha}(x;q)\,(k=2,3)$ are similar.  Substitute with
$p(x)=B^{(1)}_{n,\alpha}(x;q)$  in  (\ref{L565}). This gives
 \begin{equation}\label{L5065}
\begin{split}
  C_{m}= \frac{q^{m}(q;q)_{m}}{(q^{\alpha+1};q)_{m}(1-q)^{1+\alpha}\Gamma_{q}(\alpha+1)}\int_{0}^{\infty}\frac{x^{\alpha}}{(-x;q)_{\infty}}L_{m}^{\alpha}(x;q) B^{(1)}_{n,\alpha}(x;q) dx.
  \end{split}
\end{equation}
Since  $\{L_{m}^{\alpha}(x;q)\}_{n\in\mathbb{N}}$ is  an orthogonal polynomials sequence then $ C_{m}=0$ for $ m>n$, and
\begin{equation*}
B^{(1)}_{n,\alpha}(x;q) =\sum_{m=0}^{n}C_{m}L_{m}^{\alpha}(x;q).
\end{equation*}
Now, we calculate $C_{m}$. Using (\ref{q19}) in (\ref{L5065}) gives
\begin{equation*}
\begin{split}
  C_{m}=\frac{q^{m}(q;q)_{m}}{(q^{\alpha+1};q)_{m}(1-q)^{1+\alpha}\Gamma_{q}(\alpha+1)}\sum_{k=0}^{n}\left[
                                    \begin{array}{c}
                                      n \\
                                      k \\
                                    \end{array}
                                  \right]_{q}
  \beta_{n-k,\alpha} (q) \int_{0}^{\infty}
  \,\frac{x^{\alpha+k}}{(-x;q)_{\infty}}L_{m}^{\alpha}(x;q)dx.\end{split}
\end{equation*}
Since
\begin{equation*}
\int_{0}^{\infty}\frac{x^{\alpha}}{(-x;q)_{\infty}} L_{m}^{\alpha}(x;q)x^{k} dx=0, \quad \text{for}\,\, k<m,
\end{equation*}
then
\begin{equation*}
\begin{split}
  C_{m}=\frac{q^{m}(q;q)_{m}}{(q^{\alpha+1};q)_{m}(1-q)^{1+\alpha}\Gamma_{q}(\alpha+1)}\sum_{k=m}^{n}\left[
                                    \begin{array}{c}
                                      n \\
                                      k \\
                                    \end{array}
                                  \right]_{q}
  \beta_{n-k,\alpha} (q) \int_{0}^{\infty}
  \,\frac{x^{\alpha+k}}{(-x;q)_{\infty}}L_{m}^{\alpha}(x;q)   dx. \end{split}
\end{equation*}
From (\ref{L2}),  we get
\begin{equation*}
\begin{split}
  C_{m}&=\frac{q^{m}(1-q)^{m-1}(q^{\alpha+m+1};q)_{\infty}}{(q;q)_{\infty}}\sum_{k=m}^{n}\left[
                                    \begin{array}{c}
                                      n \\
                                      k \\
                                    \end{array}
                                  \right]_{q}
  \beta_{n-k,\alpha} (q) \int_{0}^{\infty}
  \,D_{q}^{m}\left(\frac{x^{\alpha+m}}{(-x;q)_{\infty}}\right)x^{k} dx, \end{split}
\end{equation*}
then applying the $q$-integration by part introduced in Lemma \ref{tt} $m$ times, we obtain
\begin{equation*}
\begin{split}
  C_{m}=&\frac{(-1)^{m}q^{m}(1-q)^{m-1}(q^{\alpha+m+1};q)_{\infty}}{(q;q)_{\infty}}\\&\times\sum_{k=m}^{n}\left[
                                    \begin{array}{c}
                                      n \\
                                      k \\
                                    \end{array}
                                  \right]_{q}
  \left(\prod_{i=0}^{m-1}q^{i-k}\right)\frac{[k]_{q}!}{[k-m]_{q}!} \beta_{n-k,\alpha} (q) \int_{0}^{\infty}
 \frac{x^{\alpha+k}}{(-x;q)_{\infty}}dx.
  \end{split}
\end{equation*}

From \cite[Eq. (5.4), P. 465]{mourad},
\begin{equation*}
\frac{1} { \Gamma_{q}(z)}=\frac{sin\pi z}{\pi}\int_{0}^{\infty}
\frac{ t^{-z}}{(-t(1-q);q)_{\infty}}dt, \,\,\,Re\,z>0.
\end{equation*}
Then
\begin{equation*}
  \int_{0}^{\infty}
 \frac{x^{\alpha+k}}{(-x;q)_{\infty}}dx= \frac{\pi}{sin(-\alpha-k)\pi}\frac{1} { \Gamma_{q}(-\alpha-k)}(1-q)^{\alpha+k}.
\end{equation*}
Therefore,
\begin{equation}\label{L700}
\begin{split}
  C_{m}&=\frac{(-1)^{m}q^{m}(q^{\alpha+m+1};q)_{\infty}}{(1-q)^{2}(q,q;q)_{\infty}}\\&\times\sum_{k=m}^{n}
 \left(\prod_{i=0}^{m-1}q^{i-k}\right) \frac{(q;q)_{n}(q^{-\alpha-k};q)_{\infty}}{(q;q)_{n-k}(q;q)_{k-m}}\frac{\pi}{sin(-\alpha-k)\pi}\beta_{n-k,\alpha}(q).
  \end{split}
\end{equation}
Since
\begin{equation}\label{L17}
 \frac{\pi}{sin(-\alpha-k)\pi} =(-1)^{k-1}\frac{\pi}{sin(\alpha\pi)},\quad\prod_{i=0}^{m-1}q^{i-k}=q^{\frac{m(m-1)}{2}}q^{-km},
\end{equation}
then substituting from (\ref{L17}) into (\ref{L700}), we get
{\small\begin{equation*}
\begin{split}
  C_{m}=&\frac{(-1)^{m}q^{m}q^{m(m-1)/2}(q^{\alpha+m+1};q)_{\infty}(q^{-\alpha};q)_{\infty}}{(1-q)^{2}(q,q;q)_{\infty}}\frac{\pi}{sin(\alpha\pi)}\\&\times\sum_{k=m}^{n}
(-1)^{k-1}q^{-km} \frac{(q;q)_{n}(q^{-\alpha-k};q)_{k}}{(q;q)_{n-k}(q;q)_{k-m}}\beta_{n-k,\alpha}(q).
  \end{split}
\end{equation*}}
Using the relation (\ref{f798}), we obtain
\begin{equation*}{\small
\begin{split}
  C_{m}=&\frac{-q^{m}(q^{\alpha+m+1};q^{-\alpha};q)_{\infty}}{(1-q)^{2}(q,q;q)_{\infty}}\frac{\pi}{sin(\alpha\pi)}\\&\times\sum_{k=m}^{n}q^{\frac{k(2n-k+1)}{2}}\frac{ (q^{-n};q)_{k}  (q^{-k};q)_{m}(q^{-\alpha-k};q)_{k}}{(q;q)_{k}}
 \beta_{n-k,\alpha}(q),
\end{split}}
\end{equation*}
and this completes the proof of the theorem.
}
\end{proof}
\vskip0.2cm

{\small The little $q$-Legendre polynomials $( P_{n}(x|q))_{n}$ are defined  by
\begin{equation*}
\begin{split}
 P_{n}(x|q)&=\,_{2}\varphi_{1} \left(
                                                                               \begin{array}{c}
                                                                                 q^{-n},q^{n+1} \\
                                                                                 q \\
                                                                               \end{array}
                                                                           q;qx  \right)\\&=\sum_{k=0}^{n}\frac{(q^{-n};q)_{k}(q^{n+1};q)_{k}}{(q;q)_{k}}\frac{q^{k}x^{k}}{(q;q)_{k}}.
                                                                           \end{split}
\end{equation*}
They satisfy the Rodrigues formula
\begin{equation}\label{L209}
  P_{n}(x|q)=\frac{q^{n(n-1)/2}(1-q)^{n}}{(q;q)_{n}}\,D_{q^{-1}}^{n}(x^{n}(qx;q)_{n}), \quad for\,\, n\geq 0,
\end{equation}
and the orthogonality relation
\begin{equation}\label{L353}
  \int_{0}^{1}P_{m}(x|q)P_{n}(x|q)d_{q}x
  =\frac{(1-q)}{(1-q^{2n+1})}\delta_{mn},\quad for\,\, m,\,n\geq 0,
\end{equation}}
 see \cite{Asky}. {\small Let $\mathbb{P}_{n}=\{g(x):\text {deg}\, g(x)\leq n\}$ with the inner product
\begin{equation*}
  \langle g(x),p(x)\rangle= \int_{0}^{1}g(x)p(x)d_{q}x,
\end{equation*}}
where $p(x),\,g(x)\in\mathbb{P}_{n}$.
\begin{theorem}\label{gg}{\small
Let $g(x)$ $\in\mathbb{P}_{n}$.  Then $g(x)$ can be represented by
\begin{equation*}
g(x)=\sum_{k=0}^{n}C_{k}P_{k}(x|q),
\end{equation*}
where
\begin{equation*}
\begin{split}
  C_{k}=\frac{q^{k(k-1)/2}(1-q)^{k-1}(1-q^{2k+1})}{(q;q)_{k}}\int_{0}^{1}\,D_{q^{-1}}^{k}(x^{k}(qx;q)_{k})g(x)d_{q}x.
  \end{split}
\end{equation*}}
\end{theorem}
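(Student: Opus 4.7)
The statement mirrors Theorem \ref{qq} verbatim, but with the little $q$-Legendre polynomials replacing the $q$-Laguerre ones and with the Jackson $q$-integral on $[0,1]$ as inner product. The plan is to expand $g$ in the orthogonal basis $\{P_k(x|q)\}_{k=0}^{n}$ of $\mathbb{P}_n$ guaranteed by \eqref{L353}, isolate the coefficient $C_k$ by pairing $g$ against $P_k$, and finally convert $P_k$ inside the integral into its Rodrigues form \eqref{L209}. No delicate estimates, convergence arguments, or $q$-integration by parts are needed.

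First, since $\deg P_k(x|q) = k$ and the $(P_k)_{k=0}^{n}$ are pairwise orthogonal with respect to $\langle\cdot,\cdot\rangle$, they form an orthogonal basis of $\mathbb{P}_n$, so there exist unique scalars $C_0,\ldots,C_n$ with $g(x) = \sum_{k=0}^n C_k P_k(x|q)$. Taking the inner product of both sides against $P_k(x|q)$ and using \eqref{L353} (with $m=n=k$) yields $\langle g, P_k\rangle = C_k \langle P_k,P_k\rangle = C_k \frac{1-q}{1-q^{2k+1}}$, hence
\begin{equation*}
C_k = \frac{1-q^{2k+1}}{1-q}\int_0^1 g(x)\,P_k(x|q)\,d_q x.
\end{equation*}

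Next, substitute the Rodrigues formula \eqref{L209}, namely $P_k(x|q) = \frac{q^{k(k-1)/2}(1-q)^k}{(q;q)_k}\,D_{q^{-1}}^k\bigl(x^k(qx;q)_k\bigr)$, into the displayed integral and collect the $(1-q)^k$ factor with the external $1/(1-q)$ to obtain $(1-q)^{k-1}$. This produces exactly
\begin{equation*}
C_k=\frac{q^{k(k-1)/2}(1-q)^{k-1}(1-q^{2k+1})}{(q;q)_k}\int_0^1 D_{q^{-1}}^k\bigl(x^k(qx;q)_k\bigr)\,g(x)\,d_q x,
\end{equation*}
which is the required formula. I do not anticipate any obstacle; the only step one has to be careful about is the bookkeeping of the constants extracted from the Rodrigues identity and the normalization in \eqref{L353}, both of which are purely algebraic. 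The proof is therefore a direct transcription of the argument used for Theorem \ref{qq}, but simpler because no $q$-integration by parts is invoked.
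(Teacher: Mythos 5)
Your proposal is correct and follows exactly the paper's own argument: expand $g$ in the orthogonal basis $\{P_k(x|q)\}_{k=0}^{n}$, use the orthogonality relation (\ref{L353}) to isolate $C_k = \frac{1-q^{2k+1}}{1-q}\int_0^1 P_k(x|q)\,g(x)\,d_qx$, and then substitute the Rodrigues formula (\ref{L209}) to absorb the constants into the stated form. The bookkeeping of the factors $(1-q)^{k-1}$ and $q^{k(k-1)/2}/(q;q)_k$ is exactly as in the paper, so there is nothing to add.
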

\begin{proof}{\small
Since
\begin{equation*}
g(x)=\sum_{k=0}^{n}C_{k}P_{k}(x|q),
\end{equation*}
then by the orthogonality relation (\ref{L353}), we obtain
\begin{equation}\label{L94}
\begin{split}
C_{k}=\frac{(1-q^{2k+1})}{(1-q)} \langle g(x),P_{k}(x|q)\rangle=\frac{(1-q^{2k+1})}{(1-q)}\int_{0}^{1}P_{k}(x|q)g(x)d_{q}x.
\end{split}
\end{equation}
By using (\ref{L209}), we get
\begin{equation*}
\begin{split}
  C_{k}=\frac{q^{k(k-1)/2}(1-q)^{k-1}(1-q^{2k+1})}{(q;q)_{k}}\int_{0}^{1}\,D_{q^{-1}}^{k}(x^{k}(qx;q)_{k})g(x)d_{q}x,
  \end{split}
\end{equation*}
which readily gives the result.}
\end{proof}
\begin{theorem}\label{uu}{\small
For $n\in\mathbb{N}$ and  $x\in\mathbb{C}$,
\begin{align}\label{L175}
B^{(1)}_{n,\alpha}(x;q)\nonumber& =\sum_{k=0}^{n}\lambda_{k}\left(
\sum_{m=k}^{n}(-1)^{m}q^{\frac{m(2n-m+1)}{2}}\,\frac{(q^{-n};q)_{m}(q^{-m};q)_{k}}{(q;q)_{m+k+1}}\beta_{n-m,\alpha}(q)\right)P_{k}(x|q),\\
B^{(2)}_{n,\alpha}(x;q)\nonumber&=\sum_{k=0}^{n}\lambda_{k}\left(\sum_{m=k}^{n}(-1)^{m}q^{nm}\,
\frac{(q^{-n};q)_{m}(q^{-m};q)_{k}}{(q;q)_{m+k+1}}\beta_{n-m,\alpha}(q)\right)P_{k}(x|q),\\
 B^{(3)}_{n,\alpha}(x;q)\nonumber&=\sum_{k=0}^{n}\lambda_{k}\left(\sum_{m=k}^{n}(-1)^{m}q^{\frac{m(4n-m+1)}{4}}\,
\frac{(q^{-n};q)_{m}(q^{-m};q)_{k}}{(q;q)_{m+k+1}}\beta^{(3)}_{n-m,\alpha}(q)\right)P_{k}(x|q),
\end{align}
where
\begin{equation*}
  \lambda_{k}=q^{\frac{-k(k-3)}{2}}(1-q^{2k+1}).
\end{equation*}}
\end{theorem}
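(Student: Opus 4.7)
The plan is to apply Theorem~\ref{gg} to $g(x)=B^{(1)}_{n,\alpha}(x;q)$ and follow a strategy parallel to the proof of Theorem~\ref{mm}, with the $q$-Laguerre polynomials and the improper integral over $[0,\infty)$ replaced by the little $q$-Legendre polynomials and the Jackson $q$-integral on $[0,1]$.

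First I would invoke Theorem~\ref{gg} with $g(x)=B^{(1)}_{n,\alpha}(x;q)$, so that $B^{(1)}_{n,\alpha}(x;q)=\sum_{k=0}^n C_k\,P_k(x|q)$ with
\[
C_k=\frac{q^{k(k-1)/2}(1-q)^{k-1}(1-q^{2k+1})}{(q;q)_k}\int_0^1 D_{q^{-1}}^k\bigl(x^k(qx;q)_k\bigr)\,B^{(1)}_{n,\alpha}(x;q)\,d_q x.
\]
Plugging in the polynomial representation (\ref{q19}) and exchanging the finite sum with the integral, the computation of $C_k$ reduces to that of the integrals
\[
I_{k,m}:=\int_0^1 D_{q^{-1}}^k\bigl(x^k(qx;q)_k\bigr)\,x^m\,d_q x,\qquad 0\leq m\leq n.
\]

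Second, I would evaluate $I_{k,m}$ by iterating $q$-integration by parts for the Jackson integral. The relevant one-step identity reads
$\int_0^1 g(x)D_{q^{-1}}f(x)\,d_q x=q\,f(q^{-1})g(1)-q\int_0^1 f(x)D_q g(x)\,d_q x$,
and the boundary contributions drop out at every iteration because $H_k(x):=x^k(qx;q)_k$ and all its $D_{q^{-1}}$-iterates of order $<k$ vanish at $x=q^{-1}$ (since $(qx;q)_k\bigl|_{x=q^{-1}}=(1;q)_k=0$ for $k\geq 1$, and this zero is preserved by each application of $D_{q^{-1}}$). After $k$ iterations, using $D_q^k x^m=[m]_q!/[m-k]_q!\,x^{m-k}$ (which vanishes for $m<k$, matching the orthogonality of $\{P_k\}$), we reach the $q$-Beta integral
\[
\int_0^1 x^m(qx;q)_k\,d_q x=\frac{(q;q)_m(q;q)_k(1-q)}{(q;q)_{m+k+1}},
\]
which already accounts for the denominator $(q;q)_{m+k+1}$ appearing in the statement.

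Third, I would simplify using identity (\ref{f798}) with $a=q$ applied twice: once to convert $(q;q)_n/(q;q)_{n-m}$ into $(-1)^m q^{nm-m(m-1)/2}(q^{-n};q)_m$, and once to convert $(q;q)_m/(q;q)_{m-k}$ into $(-1)^k q^{mk-k(k-1)/2}(q^{-m};q)_k$. Combined with the factor $(-q)^k$ produced by the iterated integration by parts, the overall sign is $(-1)^m$; collecting all accumulated powers of $q$ then produces the prefactor $\lambda_k=q^{-k(k-3)/2}(1-q^{2k+1})$ together with the inner summand $(-1)^m q^{m(2n-m+1)/2}(q^{-n};q)_m(q^{-m};q)_k/(q;q)_{m+k+1}$. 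The cases $k=2,3$ follow by the same argument with (\ref{q19}) replaced by (\ref{q18}) or (\ref{q180}); the additional factors $q^{m(m-1)/2}$ and $q^{m(m-1)/4}$ appearing in those polynomial expansions combine with $q^{m(2n-m+1)/2}$ to yield $q^{nm}$ and $q^{m(4n-m+1)/4}$, respectively.

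The principal obstacle is the very delicate bookkeeping of the many powers of $q$ and $q$-Pochhammer symbols that accumulate through the iterated $q$-integration by parts and the subsequent applications of (\ref{f798}); any misplacement of a $q$-power propagates through the double sum and corrupts the final coefficient, so matching the precise form $q^{m(2n-m+1)/2}$ in the inner sum together with the exact prefactor $\lambda_k$ is the main source of technical difficulty.
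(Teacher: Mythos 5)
Your plan coincides with the paper's own proof of Theorem \ref{uu}: expand via Theorem \ref{gg}, insert the representation (\ref{q19}), reduce everything to $I_{k,m}=\int_0^1 D_{q^{-1}}^k\left(x^k(qx;q)_k\right)x^m\,d_qx$, kill the boundary terms, evaluate a $q$-Beta integral, and finish with (\ref{f798}); your one-step integration by parts (which transfers $D_q$ onto $x^m$) is only a cosmetic variant of the paper's (\ref{L490}). Moreover, the intermediate facts you state are correct: the boundary values $D_{q^{-1}}^{j}\left(x^k(qx;q)_k\right)|_{x=q^{-1}}$ vanish for $j<k$ (your phrase ``the zero is preserved'' is loose, but the conclusion holds because $D_{q^{-1}}^{j}$ at $x=q^{-1}$ only samples the points $q^{-1},\dots,q^{-j-1}$, where $(qx;q)_k$ vanishes when $j\leq k-1$), and indeed $\int_0^1x^m(qx;q)_k\,d_qx=(1-q)(q;q)_m(q;q)_k/(q;q)_{m+k+1}$, so that $I_{k,m}=(-q)^k\frac{[m]_q!}{[m-k]_q!}\cdot\frac{[m]_q![k]_q!}{[m+k+1]_q!}$ (check: $I_{1,1}=-q/([2]_q[3]_q)$ by direct Jackson integration).

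The gap is the final step, which you assert rather than perform: ``collecting all accumulated powers of $q$'' does not produce the displayed coefficients, and cannot. Carrying your own (correct) intermediates through (\ref{f798}) yields the inner factor $(-1)^m q^{m(2n-m+1)/2+mk}$ with an outer constant containing no $q^{-k(k-3)/2}$; term by term this differs from the statement by $q^{k(2m+k-3)/2}$, which depends on $m$ and is nontrivial as soon as $n\geq 2$. A concrete check: $B^{(1)}_{2,\alpha}(x;q)=x^2-\frac{1+q}{2}x+\beta_{2,\alpha}(q)$, and expanding directly in $P_0,P_1,P_2$ gives the $P_1$-coefficient $\frac12-\frac{1}{1+q^2}$, whereas the displayed formula with $\lambda_1=q(1-q^3)$ gives $\frac12-\frac{1}{q(1+q^2)}$. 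The source of the mismatch is twofold and you inherit both points silently: your value of $I_{k,m}$ contradicts the paper's (\ref{L40653}) (which carries an extra $q^{-k(k-1)/2-mk}$ relative to the true value), and the constant you import from Theorem \ref{gg} rests on the orthogonality relation (\ref{L353}), whose right-hand side should carry a factor $q^{n}$ (e.g. $\int_0^1P_1^2(x|q)\,d_qx=q/(1+q+q^2)$), so your overall constant is additionally off by $q^{k}$. Hence the ``delicate bookkeeping'' you defer is exactly where your route and the printed identity part ways: executed honestly (with the corrected norm), your argument establishes $B^{(1)}_{n,\alpha}(x;q)=\sum_{k=0}^{n}(1-q^{2k+1})\left(\sum_{m=k}^{n}(-1)^m q^{\frac{m(2n-m+1)}{2}+km}\frac{(q^{-n};q)_m(q^{-m};q)_k}{(q;q)_{m+k+1}}\beta_{n-m,\alpha}(q)\right)P_k(x|q)$, not the formula as stated, so the proposal as written does not prove the statement.
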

\begin{proof}{\small

Substitute with $g(x)=B^{(1)}_{n,\alpha}(x;q)$  in (\ref{L94}), we obtain
\begin{equation}\label{L30211}
\begin{split}
 C_{k} = \dfrac{(1-q^{2k+1})}{(1-q)}\int_{0}^{1}P_{k}(x|q) B^{(1)}_{n,\alpha}(x;q)d_{q}x.
                                  \end{split}
\end{equation}
Since the polynomials $\{P_{k}(x|q)\}$ are orthogonal, then $ C_{k}=0$  for  $k>n$,  and
\begin{equation}\label{L1298}
B^{(1)}_{n,\alpha}(x;q) =\sum_{k=0}^{n}C_{k}P_{k}(x|q).
\end{equation}
Set
\begin{equation*}
B^{(1)}_{n,\alpha}(x;q)=\sum_{m=0}^{n}\left[
                                    \begin{array}{c}
                                      n \\
                                      m \\
                                    \end{array}
                                  \right]_{q}
   \beta_{n-m,\alpha} (q) x^{m}.
\end{equation*}
From (\ref{L30211}),
\begin{equation*}
\begin{split}
 C_{k}&=\dfrac{(1-q^{2k+1})}{(1-q)} \sum_{m=0}^{n}\left[
                                    \begin{array}{c}
                                      n \\
                                      m \\
                                    \end{array}
                                  \right]_{q} \beta_{n-m,\alpha} (q)\int_{0}^{1}P_{k}(x|q) x^{m}d_{q}x,\\&=\frac{(1-q^{2k+1})}{(1-q)} \sum_{m=k}^{n}\left[
                                    \begin{array}{c}
                                      n \\
                                      m \\
                                    \end{array}
                                  \right]_{q} \beta_{n-m,\alpha} (q)\int_{0}^{1}P_{k}(x|q) x^{m}d_{q}x,
                                  \end{split}
\end{equation*}
since
\begin{equation*}
\int_{0}^{1} P_{k}(x|q) x^{m}d_{q}x=0\quad for\,\,m<k.
\end{equation*}
Hence, by the Rodrigues formula  in (\ref{L209}), we obtain
\begin{equation}\label{L3211}
\begin{split}
 C_{k} = \dfrac{(1-q^{2k+1})q^{k(k-1)/2}(1-q)^{k-1}}{(q;q)_{k}} \sum_{m=k}^{n}\left[
                                    \begin{array}{c}
                                      n \\
                                      m \\
                                    \end{array}
                                  \right]_{q} \beta_{n-m,\alpha} (q)\int_{0}^{1}\,D_{q^{-1}}^{k}(x^{k}(qx;q)_{k}) x^{m}d_{q}x.
                                  \end{split}
\end{equation}
Using the $q^{-1}$-integration by parts
\begin{equation}\label{L490}
\int_{0}^{a}f(\frac{t}{q})D_{q^{-1}}g(t)d_{q}t =q\left((fg)(\frac{a}{q})-(fg)(0) \right)-\int_{0}^{a}g(t)D_{q^{-1}}f(t)d_{q}t,
\end{equation}
where $f$ and $g$ are continuous functions at zero, see \cite{Annaby}. This gives

{\small\begin{equation}\label{L403}
\begin{split}
\int_{0}^{1}\,D_{q^{-1}}^{k}(x^{k}(qx;q)_{k}) x^{m}d_{q}x=&q\left[x^{m}D_{q^{-1}}^{k-1}(x^{k}(qx;q)_{k})\right]_{0}^{\frac{1}{q}} \\&-[m]_{q}q^{1-m}\int_{0}^{1}x^{m-1}D_{q^{-1}}^{k-1}(x^{k}(qx;q)_{k})d_{q}x.
\end{split}
\end{equation}}
The first term  on the right hand side of (\ref{L403}) vanishes because
\begin{equation*}
  D_{q^{-1}}(x^{k}(qx;q)_{k})=[k]_{q^{-1}}x^{k-1}(x;q)_{k}+x^{k}D_{q^{-1}}(qx;q)_{k},
\end{equation*}
and
\begin{equation*}
 D_{q^{-1}}^{j}(qx;q)_{k}|_{x=\frac{1}{q}}=a^{k} \frac{[k]_{q}!}{[k-j]_{q}!}(1;q)_{k-j}=0,\quad \text {for}\,\,j=0,1,...k-1.
\end{equation*}
Therefore,
\begin{equation}\label{L1039}
\begin{split}
\int_{0}^{1}\,D_{q^{-1}}^{k}(x^{k}(qx;q)_{k}) x^{m}d_{q}x=-[m]_{q}q^{1-m}\int_{0}^{1}x^{m-1}D_{q^{-1}}^{k-1}(x^{k}(qx;q)_{k})d_{q}x.
\end{split}
\end{equation}
Now, applying (\ref{L490}) $k-1$ times on the right hand side of (\ref{L1039}), and using that  $D_{q^{-1}}^{m}(x^{k}(qx;q)_{k}= 0$  at  $x=0,\, x=\frac{1}{q}$ $(m=0,1,\ldots ,k-1)$ yields
\begin{equation*}
\begin{split}
\int_{0}^{1}\,D_{q^{-1}}^{k}(x^{k}(qx;q)_{k}) x^{m}d_{q}x=(-1)^{k} \left(\prod_{j=0}^{k-1}q^{1-m-j}\right)\frac{[m]_{q}!}{[m-k]_{q}!} \int_{0}^{1}
 x^{m}(qx;q)_{k}\,d_{q}x.
\end{split}
\end{equation*}
Since
\begin{equation*}
\begin{split}
  B_{q}(x,y)=\int_{0}^{1} t^{x-1}(qt;q)_{y-1}d_{q}t=\int_{0}^{1}t^{x-1}\frac{(tq;q)_{\infty}}{(tq^{y};q)_{\infty}}d_{q}t,\,\,\,Re\,(x)>0,\,Re\,(y)>0,\end{split}
\end{equation*}
see \cite[Eq. (1.58), P. 22]{Annaby}, then
{\small\begin{equation}\label{L40653}
\begin{split}
\int_{0}^{1}D_{q^{-1}}^{k}(x^{k}(qx;q)_{k}) x^{m}d_{q}x&=(-1)^{k}q^{\frac {-k(k-1)}{2}+k}q^{-mk}\frac{[m]_{q}!}{[m-k]_{q}!}B_{q}(m+1,k+1)\\&=
(-1)^{k}q^{\frac {-k(k-3)}{2}}q^{-mk}\frac{[m]_{q}!\Gamma_{q}(m+1)\Gamma_{q}(k+1)}{[m-k]_{q}!\Gamma_{q}(m+k+2)}\\&=
(-1)^{k}q^{\frac {-k(k-3)}{2}}q^{-mk}\frac{([m]_{q}!)^{2}[k]_{q}!}{[m-k]_{q}![m+k+1]_{q}!}.
\end{split}
\end{equation}}
Substituting from (\ref{L40653}) into (\ref{L3211}) yields
\begin{equation}\label{L1201}
\begin{split}
 C_{k} &=(-1)^{k}q^{k}(1-q^{2k+1}) \sum_{m=k}^{n}q^{-mk}\frac{(q;q)_{n}(q;q)_{m}}{(q;q)_{n-m}(q;q)_{m-k}(q;q)_{m+k+1}}\beta_{n-m,\alpha}(q)\\&=
 q^{\frac{-k(k-3)}{2}}(1-q^{2k+1})
\sum_{m=k}^{n}(-1)^{m}q^{\frac {m(2n-m+1)}{2}+1}\,\,
\frac{(q^{-n};q)_{m}(q^{-m};q)_{k}}{(q;q)_{m+k+1}}\beta_{n-m,\alpha}(q),
  \end{split}
\end{equation}
where we used the identity in (\ref{f798}). Therefore, from (\ref{L1201}) and (\ref{L1298}), we get the required result for $B^{(1)}_{n,\alpha}(x;q)$. Similarly, we can prove the result for $B^{(k)}_{n,\alpha}(x;q)\,(k=2,3)$. }
\end{proof}

\bibliographystyle{plain}

\bibliography{bib2}

\begin{thebibliography}{10}

\bibitem{abr}
L.~D. Abreu.
\newblock A $q$-sampling theorem related to the $q$-{Hankel} transform.
\newblock {\em Proc. Amer. Math. Soc.}, 133:1197--1203, 2004.

\bibitem{cauchy}
L.~Ahlfors.
\newblock {\em {Complex} analysis. {An} introduction to the theory of analytic
  functions of one complex variable}.
\newblock New York-Toronto-London:McGraw-Hill, 1953.

\bibitem{AL-Salam}
W.~A. Al-Salam.
\newblock $q$-{Bernoulli} numbers and polynomials.
\newblock {\em Math. Nachr.}, 17:239--260, 1959.

\bibitem{Annaby1}
M.~H. Annaby and Z.~S. Mansour.
\newblock On the zeros of the second and third {Jackson} $q$-{Bessel} functions
  and their associated $q$-{Hankel} transforms.
\newblock {\em Math. Proc. Cambridge Philos. Soc.}, 147:47--67, 2009.

\bibitem{Annaby}
M.~H. Annaby and Z.~S. Mansour.
\newblock {\em $q$-Fractional Calculus and Equations}.
\newblock Lecture Notes in Mathematics 2056. Springer-Verlag, Berlin, 2012.

\bibitem{ZMA}
M.~H. Annaby, Z.~S. Mansour, and O.~A. Ashour.
\newblock Sampling theorems associated with biorthogonal $q$-{Bessel}
  functions.
\newblock {\em J. Phys. A}, 43(29):15 pp, 2010.

\bibitem{HB}
W.~Bergweiler and W.~K. Hayman.
\newblock {Zeros} of {Solutions} of a {Functional} {Equation}.
\newblock {\em Comput. Methods Funct, Theory.}, 3:55--78, 2004.

\bibitem{Cardoss}
J.~L. Cardoso.
\newblock Basic {Fourier} series convergence on and outside the $q$-{Linear}
  grid.
\newblock {\em J. Fourier Anal. Appl.}, 17(1):96--114, 2011.

\bibitem{Frappier1}
C.~Frappier.
\newblock Representation formulas for entire functions of exponential type and
  generalized {Bernoulli} polynomials.
\newblock {\em J. Austr. Math. Soc. Ser.}, 64:307--316, 1998.

\bibitem{Frappier2}
C.~Frappier.
\newblock Generalized {Bernoulli} polynomials and series.
\newblock {\em Bull. Austral. Math. Soc.}, 61:289--304, 2000.

\bibitem{F3}
C.~Frappier.
\newblock A unified calculus using the generalized {Bernoulli} polynomial.
\newblock {\em J. Approx. Theory.}, 109:279--313, 2001.

\bibitem{Gasper}
G.~Gasper and M.~Rahman.
\newblock {\em Basic {Hypergeometric} {Series}}.
\newblock Cambridge University Press, Cambridge, second edition, 2004.

\bibitem{AZ}
A.~M. L.~El Guindy and Z.~S. Mansour.
\newblock On $q$-zeta functions associated with a pair of $q$-analogue of
  {Bernoulli} numbers and polynomials.
\newblock {\em J. Quaest. Math.}, pages 1--28, 2021.

\bibitem{HN}
W.~Hahn.
\newblock Beitr$\ddot{a}$ge zur {Theorie} der {Heineschen}{ Rei-hen}.
\newblock {\em Math. Nachr.}, 2:340--379, 1949.

\bibitem{haymen}
W.~K. Hayman.
\newblock {On} the {Zeros} of $q$-{Bessel} {Function}.
\newblock {\em Contemp. Math.}, 382:205--216, 2005.

\bibitem{mourad}
M.~E.~H. Ismail.
\newblock The {Basic} {Bessel} {Functions} and {Polynomials}.
\newblock {\em J. Math. Anal}, 12(3):454--468, 1982.

\bibitem{ismail}
M.~E.~H. Ismail.
\newblock {\em {Classical} and {Quantum} Orthogonal {Polynomials} in {One}{
  Variable}}.
\newblock Encyclopedia of Mathematics and its Applications. Cambridge
  University Press, Cambridge, 2005.

\bibitem{Jase}
M.~E.~H. Ismail, S.~J. Johnston, and Z.~S. Mansour.
\newblock Structure relations for $q$-polynomials and some applications.
\newblock {\em Applicable Analysis}, 90:747--767, 2011.

\bibitem{IZ}
M.~E.~H. Ismail and Z.~S. Mansour.
\newblock $q$-{Analogue} of {Lidstone} expansion theorems, two-point {Taylor}
  expansions theorems and {Bernoulli} polynomials.
\newblock {\em Analysis and Apllications}, 17:853--895, 2019.

\bibitem{Jackson}
F.~H. Jackson.
\newblock The basic gamma function and elliptic functions.
\newblock {\em Proc. Roy. Soc. A}, 76:127--144, 1905.

\bibitem{Asky}
R.~Koekoek and R.~Swarttouw.
\newblock {\em The Askey-scheme of hypergeometric orthogonal polynomials and
  its $q$-analogue}.
\newblock Reports of the Faculty of Technical Mathematics and Information,
  1998.

\bibitem{marim}
Z.~S. Mansour and M.~Al-Towalib.
\newblock New types of $q$-{Lidstone} expansion theorems.
\newblock submitted.

\end{thebibliography}

\end{document}